\newcommand{\norm}[1]{\left\lVert#1\right\rVert}
\newtheorem{remark}[theorem]{Remark}
\title{Conditioning and backward error of block-symmetric block-tridiagonal linearizations of matrix polynomials}
\author{M. I. Bueno\thanks{Department of Mathematics and College of Creative Studies,
University of California, Santa Barbara, CA 93106, USA ({\tt mbueno@math.ucsb.edu}). The research of M. I. Bueno was partially
supported by NSF grant DMS-1358884 and partially supported by ``Ministerio de
Econom\'{i}a, Industria y Competitividad of Spain" and ``Fondo Europeo de
Desarrollo Regional (FEDER) of EU" through grants MTM-2015-68805-REDT and
MTM-2015-65798-P (MINECO/FEDER, UE)}.
\and F. M. Dopico \thanks{Departamento de Matem\'{a}ticas, Universidad Carlos III
de Madrid, Avda.\ Universidad 30, 28911 Legan\'{e}s, Spain
(\texttt{dopico@math.uc3m.es}). The research of F. M. Dopico was partially
supported by ``Ministerio de Econom\'{i}a, Industria y Competitividad of Spain"
and ``Fondo Europeo de Desarrollo Regional (FEDER) of EU" through grants
MTM-2015-68805-REDT and MTM-2015-65798-P (MINECO/FEDER, UE).}        \and S. Furtado \thanks{Centro de An\'{a}lise Funcional, Estruturas Lineares e Aplica\c{c}\u{o}es da Universidade de Lisboa and Faculdade de Economia do Porto, Rua Dr. Roberto Frias
4200-464 Porto, Portugal ({\tt sbf@fep.up.pt}).} The research of S.\ Furtado was partially supported by project UID/MAT/04721/2013. \and L. Medina \thanks{Boston University. One Silber Way. Boston, MA 02215, USA  ({\tt medinal@bu.edu}). The research of L. Medina was partially supported by NSF grant DMS-1358884.}}
\begin{document}

\maketitle

\begin{abstract}
For each square matrix polynomial $P(\lambda)$ of odd degree, a  block-symmetric block-tridiagonal pencil  $\mathcal{T}_{P}(\lambda)$, in  the family of generalized Fiedler pencils, was introduced by
Antoniou and Vologiannidis in 2004, and a variation $\mathcal{R}_P(\lambda)$ of this pencil  was introduced by Mackey et al. in 2010.  These two pencils have several appealing properties, namely they are always strong linearizations of $P(\lambda)$, they are easy to construct from the coefficients of $P(\lambda)$,  the eigenvectors of $P(\lambda)$ can be recovered easily from those of $\mathcal{T}_P(\lambda)$ and $\mathcal{R}_P(\lambda)$, the two pencils are symmetric (resp. Hermitian) when  $P(\lambda)$ is,  and they preserve the (classical) sign characteristic of $P(\lambda)$ when $P(\lambda)$ is Hermitian.  In this paper we study the numerical behavior
of $\mathcal{T}_{P}(\lambda)$ and $\mathcal{R}_P(\lambda)$. We compare the conditioning of a finite,
nonzero, simple eigenvalue $\delta$ of $P(\lambda)$, when considered an
eigenvalue of $P(\lambda)$ and an eigenvalue of $\mathcal{T}_{P}(\lambda)$. We
also compare the backward error of an approximate eigenpair $(z,\delta)$ of
$\mathcal{T}_{P}(\lambda)$ with the backward error of an approximate eigenpair
$(x,\delta)$ of $P(\lambda)$, where $x$ was recovered from $z$ in an appropriate way. We  show that analogous results are obtained for $\mathcal{R}_P(\lambda)$. When the matrix coefficients of $P(\lambda)$ have similar norms and $P(\lambda)$ is scaled so that the largest norm of the matrix coefficients of $P(\lambda)$ is one, we conclude that $\mathcal{T}_{P}(\lambda)$  and $\mathcal{R}_P(\lambda)$ have good
numerical properties in terms of eigenvalue conditioning and backward error. 
Moreover, we compare the numerical behavior of $\mathcal{T}%
_{P}(\lambda)$ (and $\mathcal{R}_P(\lambda)$) with that of other well-studied linearizations in the literature,
namely, the first companion linearization $C_{1}(\lambda)$, and the first and last
linearizations in the standard basis of the vector space $\mathbb{DL}(P)$,
introduced by Mackey et al. (2006), and conclude that
$\mathcal{T}_{P}(\lambda)$ performs better than these linearizations when $P(\lambda)$ has odd degree and  has been scaled.  The theoretical results obtained in the paper are illustrated by numerical experiments, which confirm the advantage of using  $\mathcal{T}_P(\lambda)$ (and $\mathcal{R}_P(\lambda)$) over the other linearizations considered in the paper, in particular in symmetric and Hermitian problems. 
\end{abstract}

\begin{keywords} Backward error of an approximate eigenpair;  block-symmetric generalized Fiedler pencil; conditioning of an eigenvalue;  eigenvalue; eigenvector; first companion linearization; standard basis of $\mathbb{DL}(P)$; strong linearization.
\end{keywords}

\begin{AMS}
65F15, 65F35, 15A18, 15A22.
\end{AMS}

\pagestyle{myheadings}
\thispagestyle{plain}
\markboth{M.I. Bueno, F.M. Dopico, S. Furtado, and L. Medina}{Conditioning and backward error of block-symmetric linearizations}

\section{Introduction}
This paper focus on the study of numerical properties of some specific  linearizations of a matrix polynomial. We start by introducing some well-known concepts  that will be fundamental in the understanding of the problem that we address.

Let $P(\lambda)$ be an $n\times n$ matrix polynomial of grade $k$ with complex coefficients, i.e., 
\begin{equation}
P(\lambda)=\sum_{i=0}^{k}A_{i}\lambda^{i}, \quad A_{i}\in\mathbb{C}^{n\times n}. \label{pol}%
\end{equation}
If $A_k \neq 0$, we say that $P(\lambda)$ has degree $k$. We say that $P(\lambda)$ is a \emph{regular matrix polynomial}  if  \textrm{det}$(P(\lambda))$ is not identically zero. Otherwise, $P(\lambda)$ is said to be \emph{singular}. In this paper we will focus on regular matrix polynomials. 

 A matrix pencil  $
L(\lambda)= \lambda L_1 - L_0$,
 with $L_{1},L_{0}\in
\mathbb{C}^{kn\times kn}$, is a \emph{linearization} of $P(\lambda)$  of grade $k$ (see
\cite{GLR-book, GLR-book2}) if there exist two unimodular matrix polynomials (i.e. matrix
polynomials with constant nonzero determinant), $U(\lambda)$ and $V(\lambda)$,
such that
\[
U(\lambda)L(\lambda)V(\lambda)=\left[
\begin{array}
[c]{cc}%
I_{(k-1)n} & 0\\
0 & P(\lambda)
\end{array}
\right]  .
\]
Here and hereafter $I_{m}$ denotes the $m\times m$ identity matrix. Clearly, a linearization of a regular matrix polynomial is also regular. 

We denote by $revP(\lambda)$ the matrix polynomial obtained from
$P(\lambda)$ by reversing the order of the matrix coefficients, that is, if $P(\lambda)$ is as in (\ref{pol}), then 
$$rev P(\lambda):=\lambda^k P\left(\frac{1}{\lambda}\right)= \sum_{i=0}^k A_{k-i} \lambda^i.$$
 The matrix polynomial $rev P(\lambda)$ is known as the \emph{reversal} of $P(\lambda)$. When
$L(\lambda)$ is a linearization of $P(\lambda)$ and, in addition,
$revL(\lambda)$ is  a linearization of  $revP(\lambda),$ we say that
$L(\lambda)$ is a \emph{strong linearization} of $P(\lambda).$

Suppose that $P(\lambda)$ is regular. If  $x$ and $y$ are
nonzero vectors such that $P(\delta)x=0$ and $y^{\ast}P(\delta)=0$
for some complex number $\delta$, then $x$ and $y$ are said to be,
respectively, a \emph{right and a left eigenvector} of $P(\lambda)$
associated with the \emph{eigenvalue} $\delta$. 
 The polynomial eigenvalue problem consists in finding the eigenvalues and (right and left)
eigenvectors of a given matrix polynomial  $P(\lambda)$ and appears in many applications \cite{betcke2013nlevp, kailath, mackey, 4m-vspace, goodvibrations, voss, 21}, which has motivated an intense research activity on its numerical solution in the last fifteen years \cite{backward, tisseur, 20,scaling}. The standard way to solve this problem numerically is to construct a linearization $L(\lambda)$ of $P(\lambda)$ and solve
the corresponding generalized eigenproblem $L(\lambda)z=0$ using well-known algorithms, like the QZ algorithm for moderate size problems \cite{moler}, or a projection method for large sparse problems \cite{dongarra}. It is clear from the definition of linearization  that $P(\lambda)$ and 
$L(\lambda)$ share  the finite elementary divisors and, thus, the finite eigenvalues. If $L(\lambda)$ is a strong linearization of $P(\lambda)$, $P(\lambda)$ and $L(\lambda)$ also share the
infinite elementary divisors (that is, $rev P(\lambda)$ and $rev L(\lambda)$ share the elementary divisors associated with the eigenvalue 0). However, (strong) linearizations of $P(\lambda)$ do not share the eigenvectors of $P(\lambda)$. Thus,
for a linearization of $P(\lambda)$ to be useful in solving the polynomial eigenvalue problem for $P(\lambda)$,  among other aspects, 
it is important that the eigenvectors of $P(\lambda)$ can be recovered from the 
eigenvectors of  $L(\lambda)$ in an easy way. Additionally,  if $P(\lambda)$ is a structured matrix polynomial (symmetric, Hermitian, palindromic, etc.), it is important that the 
linearization $L(\lambda)$ shares the same structure so that it preserves numerically, i.e., in the presence of rounding errors,  the  spectral properties imposed by this structure. In this paper, we will be particularly interested in Hermitian matrix polynomials, which arise very often in applications to model systems. A set of signs, called the sign characteristic, can be associated with the eigenvalues of such type of matrix polynomials. This set of signs  is crucial for determining the behavior of the systems described by them, namely, by helping to understand the difference in behavior of the eigenvalues of their eigenvalues under structured and unstructured perturbations. Thus, when solving a polynomial eigenvalue problem using a linearization, it is important to know how the sign characteristic of the linearization and of the matrix polynomial are related \cite{ammari,SCDLP,SC,SC-new}. In particular, it may be convenient to use Hermitian linearizations of a Hermitian matrix polynomial that preserve the sign characteristic of $P(\lambda)$.

 When calculating the eigenvalues and eigenvectors of a linearization of a matrix polynomial $P(\lambda)$, rounding  errors occur. Thus, it is
important to know how these errors affect an approximate eigenpair of $P(\lambda)$ obtained from an approximate eigenpair of the linearization. In this context,  it is desirable that the condition number of an eigenvalue of a linearization $L(\lambda)$ of $P(\lambda)$ is similar to the 
condition number of the same eigenvalue of $P(\lambda)$ and, since the algorithms to solve the generalized eigenvalue problem produce small backward errors on the linearization, that the backward error of an eigenpair of  $P(\lambda)$ is close 
to that of $L(\lambda)$.  The goal of this paper is to study for the first time the numerical properties of the block-tridiagonal block-symmetric linearizations  in the family of generalized Fiedler pencils (GFP) \cite{ant-vol04} in terms of conditioning of eigenvalues and backward errors of approximate eigenpairs, and to compare their behavior with the one of other well-known linearizations in the literature.   We will consider the classical relative normwise condition number of a
simple, finite, nonzero eigenvalue of $P(\lambda)$ and the classical normwise backward error of an 
approximate eigenpair of $P(\lambda)$ (see Section \ref{cone-sec} and \cite{20}; see also Section \ref{cond-D1Dk}). We note that  it is not our objective here   to provide new linearizations nor to study known linearizations from an algebraic point of view. 

In \cite{backward, tisseur}, the relative condition number  of eigenvalues and the backward error of  approximate eigenpairs of the companion linearizations  $C_1(\lambda)$ and $C_2(\lambda)$ of a regular matrix polynomial $P(\lambda)$,  and  of  the linearizations of $P(\lambda)$ in the vector space $\mathbb{DL}(P)$, were  studied  (for a description of $C_1(\lambda)$ and $C_2(\lambda)$, see \cite{GLR-book}, and for a description of the pencils in $\mathbb{DL}(P)$, see \cite{HMMT, 4m-vspace}). 

One of the conclusions in \cite{tisseur} is that, given a simple, finite, nonzero eigenvalue $\delta$ of $P(\lambda)$  as in (\ref{pol}),  if $A_0$ is nonsingular and $|\delta| \geq 1$, then  $D_1(\lambda, P)$, the first pencil in the standard basis of $\mathbb{DL}(P)$, has a condition number close to
optimal among the linearizations of $P(\lambda)$ in $\mathbb{DL}(P)$, while if $A_k$ is nonsingular and $|\delta| \leq 1$, $D_k(\lambda, P)$, the last pencil in the standard basis of $\mathbb{DL}(P)$, has the same property (assuming that the  matrix coefficients of 
$P(\lambda)$  have similar norms).  Moreover, each of these condition numbers is close to that of $P(\lambda)$ for the same
eigenvalue. Recall that $D_1(\lambda, P)$ (resp. $D_k(\lambda, P)$) is a linearization of a regular $P(\lambda)$ if and only if
$A_0$ (resp. $A_k$) is nonsingular (\cite[Theorem 5.5]{BDFM} and \cite[Theorem 6.7]{4m-vspace}). Similarly, in \cite{backward}, it was shown that the 
linearizations $D_1(\lambda, P)$ and $D_k(\lambda, P)$ have optimal properties with respect to backward errors when $|\delta| \geq 1$ and $|\delta| \leq 1$, respectively, assuming that the matrix coefficients of $P(\lambda)$ have similar norms.   
We observe that, when $P(\lambda)$ is Hermitian, $D_1(\lambda,P)$ and $D_k(\lambda, P)$ are also Hermitian and  have attractive properties regarding the sign characteristic. In fact, in \cite{SC} we showed that, if $P(\lambda)$ as in (\ref{pol})  is  Hermitian and $A_k$ is nonsingular, then $D_k(\lambda, P)$ is a strong linearization of $P(\lambda)$ that  preserves the  sign characteristic of $P(\lambda)$ associated with the real eigenvalues. If $k$ is odd, then $D_1(\lambda, P)$ preserves the sign characteristic of $P(\lambda)$ as well.

Regarding the Frobenius companion forms $C_1(\lambda)$ and $C_2(\lambda)$, it is shown in  \cite{tisseur} that  their eigenvalues are potentially more ill conditioned than the same eigenvalues of $P(\lambda)$, though if the spectral norms of the matrix coefficients of $P(\lambda)$ are
approximately 1,  the eigenvalues of $P(\lambda)$ and of the companion linearizations have similar conditions numbers. An analogous behavior  holds with respect to backward errors of approximate eigenpairs of the companion linearizations, when using an appropriate algorithm to recover the
eigenpairs of $P(\lambda)$ from those of the linearizations (see \cite{backward}). 

In this paper, we focus on two block-symmetric block-tridiagonal  pencils associated with a matrix polynomial $P(\lambda)$ of odd degree, which we denote by $\mathcal{T}_P(\lambda)$ and $\mathcal{R}_P(\lambda)$. These pencils   were introduced in \cite{ant-vol04, 4m-alt}.  When
$P(\lambda)$ (regular or singular) has odd degree, these pencils are strong linearizations of  $P(\lambda)$  (with no nonsingularity restrictions on the coefficients of $P(\lambda)$). 
 The pencil $\mathcal{T}_P(\lambda)$  is in the family GFP  while $\mathcal{R}_P(\lambda)$
   is obtained from $\mathcal{T}_P(\lambda)$  by performing some operations on the  block-rows and block-columns, namely
  permutations and multiplications by -1.  These pencils have many relevant properties that may make them very useful in practice. They are companion forms, that is, for $P(\lambda)$ as in (\ref{pol}), they are always strong linearizations and 
their matrix coefficients  are block matrices whose blocks are of the form 
$0$, $\pm I_n$ or $\pm A_i$. Since they are
block-symmetric pencils, they are symmetric (Hermitian) when $P(\lambda)$ is.   The eigenvectors of
$P(\lambda)$ can be easily recovered from the eigenvectors of these two linearizations (see Section \ref{GFP}). Moreover,  they preserve the  sign characteristic  associated with the  real eigenvalues of $P(\lambda)$, when $P(\lambda)$ is 
Hermitian with nonsingular leading coefficient \cite[Theorem 5.3]{SC}.

We will show that, in terms of conditioning and backward errors, $\mathcal{T}_p(\lambda)$ and $\mathcal{R}_p(\lambda)$ also have attractive 
properties when $P(\lambda)$ has odd degree. More precisely, for  a simple, finite, nonzero eigenvalue $\delta$ of a regular matrix polynomial $P(\lambda)$ of odd degree $k$,  we study the condition 
number of $\delta$ as an eigenvalue of $\mathcal{T}_P(\lambda)$ and of $\mathcal{R}_P(\lambda)$. We show that,  when  it is possible to scale  $P(\lambda)$  so that all  its matrix coefficients  have norm approximately equal to 1, the condition number of $\delta$ as an eigenvalue of   $\mathcal{T}_P(\lambda)$ and  of $\mathcal{R}_P(\lambda)$  is comparable to the condition number of $\delta$ as an  eigenvalue of $P(\lambda)$ regardless of the modulus of  $\delta$, which is in stark contrast with the behavior of the linearizations $D_1(\lambda, P)$ and $D_k(\lambda, P)$ described above.  Similarly, we study the relationship between the backward error of an approximate eigenpair of $\mathcal{T}_P(\lambda)$ and of $\mathcal{R}_P(\lambda)$  with the backward error of an approximate eigenpair of $P(\lambda)$ associated with the same eigenvalue and show that both backward errors are comparable when the approximate eigenvector in the eigenpair of $P(\lambda)$ is recovered from the approximate eigenvector of the linearization in a convenient way. Due to all their attractive properties, the pencils $\mathcal{T}_P(\lambda)$ and $\mathcal{R}_P(\lambda)$ can be considered one of the most useful linearizations of $P(\lambda)$ known in the literature, when $P(\lambda)$ has odd degree, and, even more, when $P(\lambda)$ is symmetric or Hermitian.

In fact, the pencils $D_1(\lambda,P)$ and $D_k(\lambda,P)$ have certain disadvantages when used to compute eigenvalues and eigenvectors of a symmetric (Hermitian) regular matrix polynomial $P(\lambda)$.  First of all, when $A_0$ (resp. $A_k$) is close to be singular, it is unlikely that $D_1(\lambda, P)$ (resp. $D_k(\lambda, P)$) will exhibit a good numerical behavior since  $D_1(\lambda, P)$ (resp. $D_k(\lambda, P)$) is not a linearization of $P(\lambda)$ when $A_0$ (resp. $A_k)$ is singular.  Additionally, the optimality of their condition number and backward error depends on the modulus of the eigenvalue that needs to be computed which forces the use of the two linearizations when the polynomial has eigenvalues with modulus less than 1 and eigenvalues with modulus larger than 1. These problems can be solved if, instead of using $D_1(\lambda,P)$ and/or $D_k(\lambda,P)$ as linearizations of $P(\lambda)$, we use $C_1(\lambda)$, which is a strong linearization of   $P(\lambda)$ (regular and singular). However, the main problem with using  $C_1(\lambda)$ is that it is not symmetric (Hermitian) when $P(\lambda)$ is. In contrast with these linearizations,  when $P(\lambda)$ has odd degree, the pencil $\mathcal{T}_P(\lambda)$ is symmetric  (Hermitian) when $P(\lambda)$ is and it is always a strong linearization of   $P(\lambda)$ (regular or singular), i.e. without requiring any conditions on the matrix coefficients of the polynomial. Moreover,
as will be shown,  it  has a numerical behavior similar to $C_1(\lambda)$.


This paper is organized as follows. In Section \ref{cone-sec} we introduce the concept of
condition number  $\kappa_P(\delta)$ of a simple, finite, nonzero eigenvalue $\delta$ of a regular
matrix polynomial $P(\lambda)$ and present the explicit formula for it. We also recall the definition of backward error $\eta_P(x, \delta)$ of an eigenpair $(x, \delta)$ of a matrix polynomial $P(\lambda)$ and provide the explicit formula for it given in \cite{20}. In Section \ref{sec-aux} we introduce some auxiliary concepts and
results that will be helpful in proving the main results of the paper. In Section \ref{GFP} we  present  a
block-symmetric block-tridiagonal GFP  associated with a  matrix polynomial $P(\lambda)$ of odd degree, denoted by $\mathcal{T}_P(\lambda)$, and    a block-symmetric block-tridiagonal pencil strictly equivalent to $\mathcal{T}_P(\lambda)$, denoted $\mathcal{R}_P(\lambda)$.  In Section \ref{GFP} we also explain  how to construct the eigenvectors of these linearizations of $P(\lambda)$ from  eigenvectors of $P(\lambda)$ associated with the same eigenvalues  and viceversa.  
Section \ref{main} is the main section of the paper. We  study the conditioning of eigenvalues and backward error of approximate eigenpairs of the linearizations $\mathcal{T}_P(\lambda)$ and $\mathcal{R}_P(\lambda)$. More precisely, we provide bounds on the ratio $\frac{\kappa_L(\delta)}{\kappa_P(\delta)}$, where $\delta$ is a simple, nonzero, finite eigenvalue of $P(\lambda)$, and $L(\lambda)$ denotes any of the linearizations $\mathcal{T}_P(\lambda)$ or $\mathcal{R}_P(\lambda)$. Upper bounds on the ratio $\frac{\eta_P(x, \delta)}{\eta_L(z, \delta)}$ are also presented, where $(z, \delta)$ is an approximate eigenpair of $L(\lambda)$ and $(x, \delta)$ is an approximate eigenpair of $P(\lambda)$, with $x$ recovered from $z$ in a  convenient way to minimize the upper bounds.  In Section \ref{cond-D1Dk} we recall the definition of the pencils $D_1(\lambda,P)$, $D_k(\lambda,P)$ and $C_1(\lambda)$ and improve some results already known in the literature regarding the comparison of the conditioning of eigenvalues and backward error of eigenpairs of these pencils with those of $P(\lambda)$, when they are linearizations of $P(\lambda)$.  Finally, in Section \ref{numerical}, we present  some numerical experiments that illustrate the theoretical results presented in previous sections and compare the performance of the different linearizations considered in the paper, emphasizing the attractive behavior of $\mathcal{T}_P(\lambda)$ and $\mathcal{R}_P(\lambda)$. 

\section{Condition number and backward error of  matrix polynomials}\label{cone-sec}

In this section we recall the concepts of relative condition number of an eigenvalue and of backward error of an approximate eigenpair of a regular matrix polynomial  of degree $k$  as in (\ref{pol}). 

We note that, in this paper, we only consider simple eigenvalues since these are essentially the only ones appearing in numerical practice. The reason is that they are by far the most common and in the rare occasions where exact multiple eigenvalues are present, they become almost always simple clustered eigenvalues by the effect of rounding errors.





Given a complex vector $x$, we denote by $\|x\|_2$ the Euclidean norm of $x$. For $A \in \mathbb{C}^{n\times n}$, we denote by $\|A\|_2$ the spectral norm of $A$,
that is, the matrix norm of $A$ induced by the Euclidean norm.

Let $\delta$ be a simple, finite, nonzero eigenvalue of a regular matrix polynomial
$P(\lambda)$ of degree $k$ as in (\ref{pol}), and let $x$  be a right 
eigenvector of $P(\lambda)$ associated with $\delta$. A (relative) normwise condition
number $\kappa_P(\delta)$ of $\delta$ can be defined by

\begin{align*}
\kappa_{P}(\delta) = \lim_{\epsilon\to0} \sup\left\{  \frac{|\Delta\delta
|}{\epsilon|\delta|} : [P(\delta+ \Delta\delta) + \Delta P \right.   &
(\delta+ \Delta\delta)](x + \Delta x) =0,\\
&  \left.  \|\Delta A_{i} \|_{2} \leq\epsilon\; \omega_{i}, i=0,\ldots, k \right\}  ,
\end{align*}
where $\Delta P(\lambda)= \sum_{i=0}^{k} \lambda^{i} \Delta A_{i}$ and
$\omega_{i}$, $i=0,\ldots,k$,  are nonnegative weights that allow flexibility in how the
perturbations are measured. This condition number is an immediate generalization of the well-known Wilkinson condition number for the standard eigenvalue problem and measures the relative change in an eigenvalue.

The next theorem gives an explicit formula for this condition number. For a matrix polynomial $P(\lambda)$, we denote by $P'(\lambda)$ the first derivative of $P(\lambda)$ with respect to $\lambda$.


\begin{theorem}\cite[Theorem 5]{20}\label{condform} Let $P(\lambda)$ be a regular matrix polynomial of degree $k$. Let $\delta$ be a simple, finite, nonzero eigenvalue of 
$P(\lambda)$, and let $x$ and $y$ be a right
and a left eigenvector of $P(\lambda)$ associated with $\delta$. The normwise condition
number $\kappa_{P}(\delta)$ is given by
\begin{equation}\label{num-cond}
\kappa_{P}(\delta)=\frac{(\sum_{i=0}^{k}|\delta|^{i}\omega_{i})\Vert
y\Vert_{2}\Vert x\Vert_{2}}{|\delta||y^{\ast}P^{\prime}(\delta)x|}.
\end{equation}
\end{theorem}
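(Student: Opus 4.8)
The plan is to follow the standard first-order perturbation approach for simple eigenvalues of a matrix polynomial, treating $\delta$ as an implicitly defined function of the coefficient perturbations $\Delta A_i$. Since $\delta$ is simple, the eigenvalue is a differentiable function of the entries of $P$ near the unperturbed problem, and the eigenvector $x$ varies smoothly as well. First I would write $\widetilde P(\lambda) := P(\lambda) + \Delta P(\lambda)$ and consider the perturbed eigenpair relation $\widetilde P(\delta + \Delta\delta)(x + \Delta x) = 0$. Expanding to first order in $\epsilon$ (so that $\Delta\delta$, $\Delta x$, and the $\Delta A_i$ are all $O(\epsilon)$ and higher-order terms are dropped), and using $P(\delta)x = 0$, one obtains
\[
P'(\delta)x\,\Delta\delta + P(\delta)\,\Delta x + \Delta P(\delta)\,x = O(\epsilon^2).
\]
Left-multiplying by the left eigenvector $y^\ast$ kills the term $y^\ast P(\delta)\,\Delta x$ because $y^\ast P(\delta) = 0$, leaving
\[
\Delta\delta = -\,\frac{y^\ast \Delta P(\delta)\,x}{y^\ast P'(\delta)\,x} + O(\epsilon^2),
\]
where the denominator is nonzero precisely because $\delta$ is simple (this is the standard nonsingularity fact, which I would cite or recall).

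Next I would bound the numerator. Since $\Delta P(\delta) = \sum_{i=0}^k \delta^i \Delta A_i$, we get $|y^\ast \Delta P(\delta)\,x| \le \sum_{i=0}^k |\delta|^i \|\Delta A_i\|_2 \|y\|_2 \|x\|_2 \le \epsilon \left(\sum_{i=0}^k |\delta|^i \omega_i\right)\|y\|_2\|x\|_2$, using the constraint $\|\Delta A_i\|_2 \le \epsilon\,\omega_i$. Dividing by $\epsilon|\delta|$ and taking the supremum over admissible perturbations then gives the upper bound
\[
\kappa_P(\delta) \le \frac{\left(\sum_{i=0}^k |\delta|^i \omega_i\right)\|y\|_2\|x\|_2}{|\delta|\,|y^\ast P'(\delta)\,x|}.
\]
For the reverse inequality I would exhibit a perturbation attaining (in the limit) this bound: choose $\Delta A_i = \epsilon\,\omega_i\, e^{i\theta} u v^\ast$ for suitable unit vectors $u, v$ and phase $\theta$, specifically taking $v = x/\|x\|_2$, $u = y/\|y\|_2$ (up to conjugation/normalization), and $\theta$ chosen so that all the terms $\delta^i$ add up in phase and align with the sign of $1/(y^\ast P'(\delta)x)$. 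This makes $|y^\ast \Delta P(\delta) x|$ equal to $\epsilon(\sum_i |\delta|^i\omega_i)\|y\|_2\|x\|_2$, matching the bound, so the supremum is exactly the claimed expression.

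The main obstacle — or rather the only genuinely delicate point — is justifying that the first-order expansion is legitimate: that for small enough $\epsilon$ there really is a unique perturbed eigenvalue $\Delta\delta = \Delta\delta(\Delta P)$ near $0$ with the stated asymptotics uniformly over the admissible perturbation set, so that the $\lim_{\epsilon\to 0}\sup$ can be computed by linearization. This follows from the implicit function theorem applied to $f(\mu, \Delta P) = \det$-type reduction, or more cleanly from the fact that a simple eigenvalue is an analytic function of the polynomial's coefficients (with nonvanishing derivative $y^\ast P'(\delta)x$), together with a compactness argument on the set of normalized admissible directions to make the error term uniform. Everything else is the routine norm estimate and the explicit choice of extremal perturbation sketched above; since this is exactly Theorem 5 of reference \cite{20}, I would either reproduce this argument in full or simply invoke the cited result.
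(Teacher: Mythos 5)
This theorem is quoted from Tisseur \cite[Theorem 5]{20}; the paper does not reprove it, it simply cites the reference. Your proposal reconstructs the standard perturbation argument that \cite{20} uses: expand the perturbed eigenproblem to first order, left-multiply by $y^{\ast}$ so that $y^{\ast}P(\delta)\Delta x$ drops out, bound the numerator $|y^{\ast}\Delta P(\delta)x|$ with Cauchy--Schwarz, and then exhibit a rank-one perturbation attaining the bound. That is exactly the same route.

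There is one concrete slip in the attainment step. You take $\Delta A_i = \epsilon\,\omega_i\,e^{i\theta}\,uv^{\ast}$ with a \emph{single} phase $\theta$ for all $i$. That gives
\[
\sum_{i=0}^{k}\delta^{i}\,y^{\ast}\Delta A_i\,x
   = \epsilon\,e^{i\theta}\,\|y\|_2\|x\|_2\sum_{i=0}^{k}\omega_i\,\delta^{i},
\]
whose modulus is $\epsilon\,\|y\|_2\|x\|_2\,\bigl|\sum_i \omega_i\delta^{i}\bigr|$, which is \emph{strictly smaller} than $\epsilon\,\|y\|_2\|x\|_2\sum_i \omega_i|\delta|^{i}$ unless $\delta$ is real and nonnegative, because $\delta^i$ carries a different argument for each $i$. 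The fix is to let the phase depend on $i$: take
\[
\Delta A_i \;=\; \epsilon\,\omega_i\,e^{\,\mathrm{i}\theta_i}\,\frac{y\,x^{\ast}}{\|y\|_2\|x\|_2},
\qquad \theta_i \;=\; -\,i\,\arg(\delta),
\]
so that $\|\Delta A_i\|_2=\epsilon\,\omega_i$, $y^{\ast}\Delta A_i x=\epsilon\,\omega_i e^{\mathrm{i}\theta_i}\|y\|_2\|x\|_2$, and $\delta^{i}e^{\mathrm{i}\theta_i}=|\delta|^{i}$, giving $\bigl|\sum_i\delta^i y^{\ast}\Delta A_i x\bigr|=\epsilon\,\|y\|_2\|x\|_2\sum_i|\delta|^i\omega_i$ exactly. (Also note that for the \emph{relative} change $|\Delta\delta|/(\epsilon|\delta|)$ only the modulus of $\Delta\delta$ matters, so you do not actually need to align the overall phase with $1/(y^{\ast}P'(\delta)x)$; that alignment is harmless but unnecessary.) With that correction your argument is complete and coincides with the cited proof.
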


Note that, since $\delta$ is nonzero and  simple, the denominator of the expression for  $\kappa_P(\delta)$ given in (\ref{num-cond}) is nonzero \cite[Theorem 3.2]{simpledelta}.

In calculating $\kappa_P(\delta)$, we will use
the weights $\omega_i=\|A_i\|_2$. We will call these weights \emph{the natural weights for $P(\lambda)$}.   In particular,  for a  pencil $\lambda L_1 - L_0$, the natural weights are  $\omega_1=\|L_1\|_2$ and $\omega_0=\|L_0\|_2$.

Note that, if $\delta \neq 0$ is an eigenvalue of $P(\lambda)$, then the (left and right) eigenvectors of $P(\lambda)$ and $rev P(\lambda)=\lambda^k P\left (\frac{1}{\lambda}\right)$ associated with $\delta$ and $\frac{1}{\delta}$, respectively, coincide. Thus, when considering the natural weights for $P(\lambda)$ and $rev P(\lambda)$, we obtain the following result, which is a simple consequence of Theorem \ref{condform}.  

Note that, if $P(\lambda)$ as in (\ref{pol}) has degree $k$ and $A_0\neq 0$, then $rev P(\lambda)$ has degree $k$ as well.
\begin{lemma}
\label{lcondrev}Let $P(\lambda)$ be a regular matrix polynomial of degree $k$ as in (\ref{pol}) with $A_0\neq 0$.
Let $\delta$ be a simple, finite, nonzero eigenvalue of $P(\lambda)$. 
Then
\[
\kappa_{P}(\delta)=\kappa_{revP}\left(  \frac{1}{\delta}\right)  .
\]

\end{lemma}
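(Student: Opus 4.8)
The plan is to apply Theorem \ref{condform} twice — once to $P(\lambda)$ at $\delta$ and once to $revP(\lambda)$ at $1/\delta$ — and then simplify the quotient of the two explicit formulas. First I would record the two ingredients that make this work. The first is the observation already stated in the excerpt: if $\delta\neq 0$ is an eigenvalue of $P(\lambda)$ with right eigenvector $x$ and left eigenvector $y$ (so $P(\delta)x=0$ and $y^{\ast}P(\delta)=0$), then from $revP(\mu)=\mu^k P(1/\mu)$ one gets $revP(1/\delta)=\delta^{-k}P(\delta)$, so $revP(1/\delta)x=0$ and $y^{\ast}revP(1/\delta)=0$; hence $x$ and $y$ are right and left eigenvectors of $revP(\lambda)$ for the eigenvalue $1/\delta$, which is also finite, nonzero and simple (since $P$ and $revP$ share elementary divisors appropriately, or directly because the rank of $revP(1/\delta)$ equals that of $P(\delta)$). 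The second ingredient is the relation between $y^{\ast}(revP)'(1/\delta)x$ and $y^{\ast}P'(\delta)x$; differentiating $revP(\mu)=\mu^k P(1/\mu)$ gives $(revP)'(\mu)=k\mu^{k-1}P(1/\mu)-\mu^{k-2}P'(1/\mu)$, so evaluating at $\mu=1/\delta$ and multiplying by $x$ on the right and $y^{\ast}$ on the left, the term $k\mu^{k-1}P(1/\mu)$ dies against $x$, leaving $y^{\ast}(revP)'(1/\delta)x = -\delta^{2-k}\,y^{\ast}P'(\delta)x$.

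Next I would assemble the two condition numbers. With the natural weights $\omega_i=\|A_i\|_2$ for $P(\lambda)$, Theorem \ref{condform} gives
\[
\kappa_P(\delta)=\frac{\bigl(\sum_{i=0}^k |\delta|^i\,\|A_i\|_2\bigr)\,\|y\|_2\,\|x\|_2}{|\delta|\,|y^{\ast}P'(\delta)x|}.
\]
Since $rev P(\lambda)=\sum_{i=0}^k A_{k-i}\lambda^i$, the natural weights for $revP(\lambda)$ are the $\|A_{k-i}\|_2$, so Theorem \ref{condform} applied to $revP$ at $1/\delta$ (using the eigenvectors $x,y$ identified above) gives
\[
\kappa_{revP}\!\left(\tfrac1\delta\right)=\frac{\bigl(\sum_{i=0}^k |\delta|^{-i}\,\|A_{k-i}\|_2\bigr)\,\|y\|_2\,\|x\|_2}{|\delta|^{-1}\,|y^{\ast}(revP)'(1/\delta)x|}.
\]
Using the derivative identity, $|y^{\ast}(revP)'(1/\delta)x| = |\delta|^{2-k}\,|y^{\ast}P'(\delta)x|$, and re-indexing the weighted sum via $j=k-i$, one has $\sum_{i=0}^k |\delta|^{-i}\|A_{k-i}\|_2 = \sum_{j=0}^k |\delta|^{j-k}\|A_j\|_2 = |\delta|^{-k}\sum_{j=0}^k |\delta|^j\|A_j\|_2$.

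Finally I would substitute these into the formula for $\kappa_{revP}(1/\delta)$ and collect the powers of $|\delta|$: the denominator becomes $|\delta|^{-1}\cdot|\delta|^{2-k}|y^{\ast}P'(\delta)x| = |\delta|^{1-k}|y^{\ast}P'(\delta)x|$, the numerator becomes $|\delta|^{-k}\bigl(\sum_j|\delta|^j\|A_j\|_2\bigr)\|y\|_2\|x\|_2$, and the ratio of the two powers is $|\delta|^{-k}/|\delta|^{1-k}=|\delta|^{-1}$, which reproduces exactly the formula for $\kappa_P(\delta)$. The only genuinely delicate points are bookkeeping ones: making sure the eigenvector identifications are literally the same vectors $x,y$ (so the norms cancel rather than merely being proportional), and tracking the sign/modulus and the powers of $\delta$ in the derivative identity correctly. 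The hypothesis $A_0\neq0$ guarantees $revP$ has degree $k$ so that Theorem \ref{condform} applies verbatim with $k$ as the degree; without it one would have to interpret $revP$ as a polynomial of grade $k$ with a different degree, changing nothing in the cancellation but complicating the statement. There is no serious analytic obstacle here — it is a direct two-line substitution once the derivative identity is in hand.
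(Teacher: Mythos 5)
Your proposal is correct and follows exactly the route the paper intends: the paper supplies no formal proof, only the observation that the eigenvectors of $P$ at $\delta$ and $revP$ at $1/\delta$ coincide and that the lemma is then "a simple consequence of Theorem \ref{condform}." Your computation — including the derivative identity $y^{\ast}(revP)'(1/\delta)x = -\delta^{2-k}y^{\ast}P'(\delta)x$ and the re-indexing of the weighted sum — fills in that cancellation correctly and is precisely the verification the paper has in mind.
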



 In order to find a strong linearization $L(\lambda)$ of $P(\lambda)$ such that $\delta$ has a comparable condition number when considered as an eigenvalue of $P(\lambda)$ and of $L(\lambda)$,  it may  be convenient to scale $P(\lambda)$. 
Let us consider  the eigenvalue parameter scaling  given by $\widetilde{P}(\mu):=\beta P(\gamma
\mu),$ where $\beta$,$\gamma$ are nonzero complex scaling parameters. We have
\begin{equation}\label{Ptilde}
\widetilde{P}(\mu)= \mu^{k}\widetilde{A}_{k}+\cdots
+\mu\widetilde{A}_{1}+\widetilde{A}_{0},
\end{equation}
where $\widetilde{A}_{i}=\beta\gamma^{i}A_{i}$, $i=1,\ldots,k.$   It is clear
that $\delta$ is an eigenvalue of $P(\lambda)$ if and only if $\delta/\gamma$ is an eigenvalue of $\widetilde{P}(\mu)$. Moreover, $P(\lambda)$ and $\widetilde{P}(\mu)$ have the
same eigenvectors for the eigenvalues $\delta$ and $\delta/\gamma$,
respectively. Thus, by considering the natural weights for $P(\lambda)$ and
$\widetilde{P}(\mu)$, it follows that $\kappa_{P}(\delta)$ is invariant under a scaling of the type described above, that is, $\kappa_{P}(\delta)=\kappa
_{\widetilde{P}}(\delta/\gamma).$  We will  show  that  this type of scaling can be used on $P(\lambda)$  to improve   the bounds  of the ratio of the condition numbers of an eigenvalue of $P(\lambda)$ and of  a linearization.  More explicitly, when the norms of the matrix coefficients of $P(\lambda)$ do not vary  too much, we will use  the scaling given by 
 $\gamma=1$ and $\beta^{-1}=\max_{i=0,\ldots, k}\{\|A_i\|_2\}$ to improve such bounds. Otherwise, we will  consider scalings with $\gamma \neq 1$. 

A disadvantage of the condition number $\kappa_{P}(\delta)$ defined above is
that it is not valid for zero or infinite eigenvalues. In \cite[Theorem 4.2]{dedieu},  Dedieu and Tisseur defined a relative eigenvalue condition number available  for all 
eigenvalues, including 0 and infinity. In this definition, the matrix
polynomial $P(\lambda)$ is rewritten in homogeneous form, that is,
\[
P(\alpha,\beta)=\sum_{i=0}^{k}\alpha^{i}\beta^{k-i}A_{i},
\]
and an eigenvalue $\delta$ of $P(\lambda)$ is identified  with any pair $(\alpha, \beta)\neq (0,0)$ for which $\delta = \alpha/ \beta$. We observe that, contrarily to what happens with the condition number in (\ref{num-cond}), which is invariant under some 
scalings of the matrix polynomial (a procedure that we apply to improve the condition number  of linearizations), the condition number of Dedieu
and Tisseur is scale-dependent, that is,  it can be changed by a scaling of the  
 original matrix polynomial  \cite{scaling}. Since we anticipate that the ratio of the Dedieu and Tisseur condition number of a zero or infinite eigenvalue, when considered as an eigenvalue of a matrix polynomial $P(\lambda)$ and of each of the linearizations considered in this paper, may not be good  without a previous  scaling of $P(\lambda)$,   we do not consider such eigenvalues in our study and focus on the condition number in (\ref{num-cond}).  In \cite{scaling} the homogeneous formulation was avoided for similar reasons.
 
The normwise backward error of an approximate (right) eigenpair $(x, \delta)$ of $P(\lambda)$, where $\delta$ is finite, is defined by 

$$\eta_P(x, \delta)=\min\{ \epsilon: (P(\delta)+ \Delta P(\delta))x=0, \quad \|\Delta A_i\|_2 \leq \epsilon \|A_i\|_2, \; i=0,\ldots, k\},$$
where $\Delta P(\lambda)=\sum_{i=0}^k \lambda^i \Delta A_i.$
Similarly, for an approximate left eigenpair $(y^*, \delta)$, we have
$$\eta_P(y^*, \delta):=\min \{ \epsilon: y^* ( P(\delta)+\Delta P(\delta))=0, \quad \|\Delta A_i\|_2 \leq \epsilon \|A_i\|_2, \; i=0,\ldots, k\}.$$

The following result provides  explicit formulas for $\eta_P(x, \delta)$ and $\eta_P(y^*, \delta)$.

\begin{theorem}\label{back}\cite[Theorem 1]{20} Let $P(\lambda)$ be a regular matrix polynomial of degree $k$ as in (\ref{pol}). For a given approximate right eigenpair $(x, \delta)$ of 
$P(\lambda)$, where $x\in \mathbb{C}^{n\times 1}$ and $\delta\in \mathbb{C}$, the normwise backward error $\eta_P(x, \delta)$ is given by
$$\eta_P(x,\delta)=\frac{\|P(\delta)x\|_2}{(\sum_{i=0}^{k} |\delta|^{i}\|A_i\|_2)\|x\|_2}.$$ 
For an approximate left eigenpair $(y^*, \delta)$, where $y\in \mathbb{C}^{n\times 1}$ and $\delta \in \mathbb{C}$, we have 
$$\eta_P(y^*,\delta)=\frac{\|y^* P(\delta)\|_2}{(\sum_{i=0}^{k} |\delta|^{i}\|A_i\|_2)\|y\|_2}.$$ 
\end{theorem}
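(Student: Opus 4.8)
The plan is to prove both identities directly from the definition of the normwise backward error, obtaining the stated expression as a lower bound via the triangle inequality and as an upper bound via an explicit perturbation that attains it. I treat the right eigenpair first; the left eigenpair formula will then follow by a conjugate-transpose reduction.

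First I would rewrite the feasibility condition. Since $(P(\delta)+\Delta P(\delta))x=0$ is the same as $\sum_{i=0}^k \delta^i(\Delta A_i)x=-P(\delta)x$, setting $r:=P(\delta)x$ and $s:=\sum_{i=0}^k |\delta|^i\|A_i\|_2$ we have
\[
\eta_P(x,\delta)=\min\Big\{\epsilon \;:\; \sum_{i=0}^k \delta^i(\Delta A_i)x=-r,\ \ \|\Delta A_i\|_2\le \epsilon\|A_i\|_2,\ i=0,\dots,k\Big\}.
\]
For the lower bound, any feasible $\epsilon$ with perturbations $\Delta A_i$ satisfies, by the triangle inequality and submultiplicativity of the spectral norm,
\[
\|r\|_2=\Big\|\sum_{i=0}^k \delta^i(\Delta A_i)x\Big\|_2\le \sum_{i=0}^k |\delta|^i\,\|\Delta A_i\|_2\,\|x\|_2\le \epsilon\, s\,\|x\|_2 ,
\]
so $\epsilon\ge \|P(\delta)x\|_2/(s\|x\|_2)$, which is the claimed value (when $r\neq 0$ one automatically has $s>0$, so the quotient is well defined).

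For the upper bound I would exhibit rank-one perturbations attaining $\epsilon^\ast:=\|r\|_2/(s\|x\|_2)$. If $r=0$ the zero perturbation works and both sides vanish; otherwise, writing $\delta=|\delta|\zeta$ with $|\zeta|=1$, set
\[
\Delta A_i:=-\,\frac{\bar\zeta^{\,i}\,\|A_i\|_2}{s}\cdot\frac{r\,x^\ast}{\|x\|_2^2},\qquad i=0,\dots,k
\]
(taking $\Delta A_i=0$ for $i\ge 1$ in the degenerate case $\delta=0$). Using $\delta^i\bar\zeta^{\,i}=|\delta|^i$ and $x^\ast x=\|x\|_2^2$ one checks $\sum_{i=0}^k \delta^i(\Delta A_i)x=-\big(s^{-1}\sum_{i=0}^k|\delta|^i\|A_i\|_2\big)\,r=-r$, while $\|r\,x^\ast\|_2=\|r\|_2\|x\|_2$ gives $\|\Delta A_i\|_2=(\|A_i\|_2/s)(\|r\|_2/\|x\|_2)=\epsilon^\ast\|A_i\|_2$. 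Hence $\epsilon^\ast$ is feasible, so $\eta_P(x,\delta)\le\epsilon^\ast$, and together with the lower bound we get equality, with the minimum attained.

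Finally, the left eigenpair formula follows by applying the right eigenpair result to the polynomial $Q(\mu):=\sum_{i=0}^k A_i^\ast\mu^i$ at the eigenvalue $\bar\delta$ with right eigenvector $y$: the equivalence $y^\ast(P(\delta)+\Delta P(\delta))=0\Leftrightarrow(Q(\bar\delta)+\Delta Q(\bar\delta))y=0$ (with $\Delta Q$ built from the matrices $(\Delta A_i)^\ast$), the invariance of $\|\cdot\|_2$ and of the weights under conjugate transposition, and $\|Q(\bar\delta)y\|_2=\|y^\ast P(\delta)\|_2$ then yield the stated expression. I expect the only genuine obstacle to be the construction of the optimal perturbation — specifically, pinning down the phase factors $\bar\zeta^{\,i}$ so that the residual equation holds exactly while every norm constraint is met with equality — together with a careful but routine treatment of the degenerate cases $r=0$ and $\delta=0$; everything else reduces to elementary norm estimates.
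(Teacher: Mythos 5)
The paper does not prove this result; it cites it directly from \cite{20} (Tisseur, 2000). Your argument is correct and is essentially the same as Tisseur's original proof: the lower bound follows from the triangle inequality and submultiplicativity, and the upper bound is attained by the rank-one perturbations $\Delta A_i$ built from the residual $r=P(\delta)x$ with phase factors $\bar\zeta^{\,i}$ chosen so that the contributions add coherently along $r$; the left-eigenpair case is reduced to the right one via conjugate transposition.

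One small remark on hygiene rather than substance: when $r\neq 0$ you assert ``one automatically has $s>0$.'' This is true in all non-pathological situations, but strictly speaking $s=0$ with $r\neq 0$ can occur for a degenerate polynomial (e.g.\ $\delta=0$ with $A_0=0$ but $P(\delta)x$ interpreted for a grade-$k$ polynomial). In that corner case no admissible perturbation exists, the minimum is $+\infty$, and the formula is to be read as $+\infty$ as well, so the identity still holds in the extended sense; it is worth a one-line caveat if you want the statement to be literally airtight. Otherwise the treatment of $r=0$ and $\delta=0$ is fine, and the construction of the optimal perturbation is exactly the standard one.
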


The  result we present next is a consequence of Theorem \ref{back} and can be easily checked.

\begin{lemma}
\label{lbackrev}Let $P(\lambda)$ be a regular matrix polynomial of degree $k$ as in (\ref{pol})  with $A_0 \neq 0$. For
a given approximate right  (resp. left) eigenpair $(x,\delta)$ (resp. ($y^{\ast},\delta)$)
of $P(\lambda)$, where $x,y\in\mathbb{C}^{n\times1}$ and $0\neq\delta
\in\mathbb{C}$, we have that $(x, \frac{1}{\delta})$ (resp. $(y^*, \frac{1}{\delta}))$ is an approximate right (resp. left) eigenpair of $rev P(\lambda)$ and 
\[
\eta_{P}(x,\delta)=\eta_{revP}\left(  x,\frac{1}{\delta}\right) \quad  \text{ and
}\quad \eta_{P}(y^{\ast},\delta)=\eta_{revP}\left (y^{\ast},\frac{1}{\delta}\right).
\]

\end{lemma}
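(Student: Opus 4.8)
The plan is to derive everything directly from the explicit backward-error formulas in Theorem~\ref{back}, applied once to $P(\lambda)$ and once to $rev\,P(\lambda)$, and then to match the two expressions term by term. First I would record the elementary facts underlying the statement. If $A_0 \neq 0$, then $rev\,P(\lambda) = \sum_{i=0}^k A_{k-i}\lambda^i$ has degree $k$ (its leading coefficient is $A_0$), so Theorem~\ref{back} genuinely applies to it with the natural weights $\|A_{k-i}\|_2$. Next, for $0 \neq \delta \in \mathbb{C}$ one has the key identity $rev\,P(1/\delta) = \delta^{-k} P(\delta)$, obtained straight from the definition $rev\,P(\mu) = \mu^k P(1/\mu)$ with $\mu = 1/\delta$. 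Consequently $\bigl(rev\,P(1/\delta)\bigr)x = \delta^{-k}P(\delta)x$, which is zero exactly when $P(\delta)x = 0$; this establishes that $(x,1/\delta)$ is an approximate right eigenpair of $rev\,P(\lambda)$ (in the sense that the backward error is finite and small precisely when it is for $(x,\delta)$ of $P(\lambda)$), and the same computation with $y^\ast$ on the left handles the left-eigenpair claim.

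The main computation is then to evaluate $\eta_{revP}(x,1/\delta)$ using Theorem~\ref{back}. The numerator is $\|rev\,P(1/\delta)\,x\|_2 = |\delta|^{-k}\,\|P(\delta)x\|_2$. For the denominator, I would expand $\sum_{j=0}^k |1/\delta|^j \|A_{k-j}\|_2 \cdot \|x\|_2$ and reindex with $i = k-j$, which turns the sum into $\sum_{i=0}^k |\delta|^{i-k}\|A_i\|_2 \cdot \|x\|_2 = |\delta|^{-k}\sum_{i=0}^k |\delta|^i \|A_i\|_2 \cdot \|x\|_2$. The factor $|\delta|^{-k}$ cancels between numerator and denominator, leaving exactly $\|P(\delta)x\|_2 / \bigl((\sum_{i=0}^k |\delta|^i \|A_i\|_2)\|x\|_2\bigr) = \eta_P(x,\delta)$. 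The left-eigenpair identity is obtained by the identical reindexing applied to the formula for $\eta_P(y^\ast,\delta)$, using $y^\ast\, rev\,P(1/\delta) = \delta^{-k}\, y^\ast P(\delta)$.

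There is essentially no obstacle here: the whole proof is the reindexing $i \mapsto k-i$ in the weighted sum together with the scalar identity $rev\,P(1/\delta) = \delta^{-k}P(\delta)$, and the two $|\delta|^{-k}$ factors cancelling. The only point that needs a word of care is the hypothesis $A_0 \neq 0$, which is exactly what guarantees $rev\,P(\lambda)$ has degree $k$ so that Theorem~\ref{back} is applicable to it with the correct number of weighted terms; without it the sum defining the denominator of $\eta_{revP}$ would run over a polynomial of lower grade and the clean cancellation would be disrupted. Given the short length and routine nature of the argument, I would present it in a couple of lines, noting that it parallels verbatim the proof of Lemma~\ref{lcondrev} for the condition number, with $\kappa$ replaced by $\eta$ and Theorem~\ref{condform} replaced by Theorem~\ref{back}.
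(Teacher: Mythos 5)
Your proof is correct and is exactly the direct verification the paper has in mind when it states that the lemma ``can be easily checked'' as a consequence of Theorem~\ref{back}: apply the explicit formula to $rev\,P$, use $rev\,P(1/\delta)=\delta^{-k}P(\delta)$, reindex the denominator sum by $i\mapsto k-i$, and cancel the common factor $|\delta|^{-k}$. Your remark that $A_0\neq 0$ is needed so that $rev\,P$ has degree $k$ (and hence the natural weights line up) is precisely the reason the hypothesis appears in the statement.
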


A scaling on the eigenvalue parameter can also be used to improve the normwise backward error of an approximate eigenpair of a linearization of a matrix polynomial.  It is easy to show that $\eta_{\tilde{P}}(x, \mu)= \eta_{P}(x, \gamma \mu)$, where $\tilde{P}(\mu)$ is as in (\ref{Ptilde}), while the corresponding backward errors of the (same) linearizations of $P(\lambda)$ and 
$\tilde{P}(\mu)$ can be quite different.  We will use this fact in our numerical experiments to show that a scaling of the polynomial can be applied to decrease the backward error of the block-symmetric  linearizations that we are studying in this paper. 


\begin{remark}\label{A0neq0}
To compare the conditioning of eigenvalues and backward error of approximate eigenpairs of a matrix polynomial $P(\lambda)$ with those of the linearizations considered in this paper, it will be convenient to assume that  $P(\lambda)$ as in (\ref{pol}) has a nonzero constant term $A_0$. However, if $A_0=0$, we have that  $P(\lambda)=\lambda^s P_1(\lambda)$  for some $s$, where the constant term of $P_1(\lambda)$ is nonzero, and we may consider $P_1(\lambda)$ instead of $P(\lambda)$ to compute  the nonzero eigenvalues of $P(\lambda)$. Note that $P(\lambda)$ and $P_1(\lambda)$ have the same nonzero finite eigenvalues.
\end{remark}

The  sign characteristic of a Hermitian matrix polynomial  can be obtained from 
the sign characteristic of one of its Hermitian linearizations, as long as we know how they are related. In \cite{SCDLP, SC},  the authors 
considered the classical definition of sign characteristic of a Hermitian matrix polynomial $P(\lambda)$  with nonsingular
leading coefficient, presented in \cite{GLR-book, GLR-book2}, and studied the (classical) sign characteristic of its linearizations in the vector space $\mathbb{DL}(P)$ and in the family of 
block-symmetric generalized Fiedler pencils with repetition  (introduced in \cite{BDFM}),  as well as of the
linearizations $\mathcal{T}_P(\lambda)$ and $\mathcal{R}_P(\lambda)$, to be introduced in Section \ref{GFP} (see also the Introduction). Besides these papers and \cite{ammari}, where some partial results on the sign characteristic of  linearizations in $\mathbb{DL}(P)$  are given, the authors do not know of any other papers in the literature studying the sign characteristic of specific linearizations of matrix polynomials.   (We observe that a  generalization of the concept of sign 
characteristic to general matrix polynomials was published recently \cite{SC-new} but we are not aware of any results regarding
 the sign characteristic of Hermitian linearizations, using this more general definition). Since a scaling of $P(\lambda)$ may be used
 to improve conditioning or backward errors of  its linearizations, we note in the following theorem that the procedure of scaling a Hermitian 
 $P(\lambda)$ with nonsingular leading coefficient  preserves the sign characteristic of $P(\lambda)$, when the scaling parameters $\beta$ and $\gamma$ are positive real numbers.  Based on this result, the (classical) sign characteristic of the 
 original matrix polynomial can be obtained from the sign characteristic of a linearization $L(\mu)$ of the scaled matrix polynomial
 $\tilde{P}(\mu)$, as long as we know how the sign characteristic of $\tilde{P}(\mu)$ and $L(\mu)$ are related. The proof of the next theorem appears in Appendix A.

\begin{theorem}\label{SC-preserv}
Let $P(\lambda)$ be a Hermitian matrix polynomial of degree $k$ as in (\ref{pol}), with
$A_{k}$ nonsingular, and let $\widetilde{P}(\mu):=\beta P(\gamma\mu),$ where
$\beta$ and $\gamma$ are positive real numbers. Then, $P(\lambda)$ and
$\widetilde{P}(\mu)$ have the same  (classical) sign characteristic.
\end{theorem}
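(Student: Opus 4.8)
The plan is to reduce the statement to the local description of the classical sign characteristic. Recall that, for a Hermitian matrix polynomial $H(\lambda)$ with nonsingular leading coefficient and a real eigenvalue $\lambda_0$, the partial multiplicities $\ell_1\geq\cdots\geq\ell_r\geq 1$ and the signs $\epsilon_1,\dots,\epsilon_r\in\{\pm 1\}$ attached to $\lambda_0$ by the classical sign characteristic are exactly the data of a local analytic congruence
$$
H(\lambda)=E(\lambda)^{*}\Big(\bigoplus_{i=1}^{r}\epsilon_i(\lambda-\lambda_0)^{\ell_i}\ \oplus\ Q(\lambda)\Big)E(\lambda),
$$
valid in a neighbourhood of $\lambda_0$, where $E(\lambda)$ is analytic and invertible at $\lambda_0$ and $Q(\lambda_0)$ is nonsingular; this is classical (see \cite{GLR-book,GLR-book2}), and rests on the facts that a Hermitian analytic germ diagonalizes under congruence and that a scalar germ $u(\lambda)(\lambda-\lambda_0)^{\ell}$ with $u(\lambda_0)\in\mathbb{R}\setminus\{0\}$ is congruent to $\mathrm{sign}(u(\lambda_0))(\lambda-\lambda_0)^{\ell}$. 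Before using this, I would record the elementary observations that $\widetilde{A}_k=\beta\gamma^{k}A_k$ is nonsingular (so $\widetilde{P}(\mu)$ indeed has a classical sign characteristic), that $\widetilde{P}(\mu)$ is Hermitian precisely because $\beta$ and $\gamma$ are real, and that, since $\gamma>0$, the map $\mu\mapsto\gamma\mu$ is an increasing bijection of $\mathbb{R}$ that carries the real eigenvalues of $\widetilde{P}$ bijectively onto those of $P$, with $\mu_0:=\lambda_0/\gamma$ corresponding to $\lambda_0$.

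The core step is then a direct substitution. Fix a real eigenvalue $\lambda_0$ of $P$, write $P$ in the local form above near $\lambda_0$, set $\lambda=\gamma\mu$, and use $\gamma\mu-\lambda_0=\gamma(\mu-\mu_0)$, so that $(\gamma\mu-\lambda_0)^{\ell_i}=\gamma^{\ell_i}(\mu-\mu_0)^{\ell_i}$. This gives, in a neighbourhood of $\mu_0$,
$$
\widetilde{P}(\mu)=\beta\,P(\gamma\mu)=E(\gamma\mu)^{*}\Big(\bigoplus_{i=1}^{r}\big(\beta\gamma^{\ell_i}\epsilon_i\big)(\mu-\mu_0)^{\ell_i}\ \oplus\ \beta\,Q(\gamma\mu)\Big)E(\gamma\mu).
$$
Here $E(\gamma\mu)$ is analytic and invertible at $\mu_0$, $\beta\,Q(\gamma\mu)$ is analytic and nonsingular at $\mu_0$, and --- this is where the positivity of $\beta$ and $\gamma$ is used --- $\beta\gamma^{\ell_i}>0$, so each scalar block $\big(\beta\gamma^{\ell_i}\epsilon_i\big)(\mu-\mu_0)^{\ell_i}$ is congruent, as a germ at $\mu_0$, to $\epsilon_i(\mu-\mu_0)^{\ell_i}$ (absorbing $\sqrt{\beta\gamma^{\ell_i}}$). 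Hence $\widetilde{P}$ is already in the local canonical form at $\mu_0$ with exactly the same partial multiplicities $\ell_i$ and the same signs $\epsilon_i$ as $P$ at $\lambda_0$.

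Putting this together with the eigenvalue correspondence $\lambda_0\leftrightarrow\lambda_0/\gamma$ from the first step shows that $P$ and $\widetilde{P}$ have the same classical sign characteristic, as claimed. The main obstacle is not any of the computations but making the first step rigorous: one must carefully invoke, with precise references, the equivalence between the definition of the classical sign characteristic adopted in \cite{GLR-book,GLR-book2} --- stated there through self-adjoint Jordan triples of a linearization, which is exactly why the hypothesis that $A_k$ be nonsingular is imposed --- and the local-analytic-congruence description exploited above. I would also point out that the hypotheses $\beta,\gamma>0$ cannot be relaxed: $\beta<0$ negates every sign, and $\gamma<0$ negates the signs of the odd-length Jordan blocks, as is already visible on $P(\lambda)=\lambda$ and $P(\lambda)=\lambda^{2}$. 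A more computational alternative would instead transform a canonical system of Jordan chains at $\lambda_0$ --- a chain $(x_0,\dots,x_{\ell-1})$ of $P$ at $\lambda_0$ becomes $(x_0,\gamma x_1,\dots,\gamma^{\ell-1}x_{\ell-1})$ for $\widetilde{P}$ at $\mu_0$, since $\widetilde{P}^{(t)}(\mu_0)=\beta\gamma^{t}P^{(t)}(\lambda_0)$ --- and check that the associated Hermitian Gram matrix only acquires positive powers of $\gamma$ and the positive factor $\beta$, so that its inertia, and thus the block signs, are unchanged; but the bookkeeping of those powers is more delicate than the germ argument.
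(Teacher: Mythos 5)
Your proof is correct, but it follows a genuinely different route from the one in the paper. The paper works squarely inside the framework set up in \cite{SC}: it forms the pair $(C_P,B_P)$, observes that $C_{\widetilde P}=\tfrac1\gamma\Gamma^{-1}C_P\Gamma$ and $B_{\widetilde P}=\gamma\beta\,\Gamma B_P\Gamma$ for a fixed diagonal scaling $\Gamma$, passes to the selfadjoint Jordan canonical pair $(J,P_{\varepsilon,J})$, and exhibits an explicit block-diagonal matrix $L$ (built from the $\Gamma_{l_i}$'s scaled by $\sqrt{(\beta\gamma^{l_i})^{-1}}$) that simultaneously conjugates $\tfrac1\gamma J_P$ to a Jordan form with eigenvalues $\lambda/\gamma$ and congruences $\gamma\beta N_P$ back to $P_{\varepsilon,J}$; this shows the sign blocks are literally unchanged. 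You instead work locally: you invoke the analytic-congruence normal form of a Hermitian matrix polynomial near a real eigenvalue (the direct sum of scalar germs $\epsilon_i(\lambda-\lambda_0)^{\ell_i}$ plus a nonsingular remainder), perform the substitution $\lambda=\gamma\mu$, and absorb the positive factors $\beta\gamma^{\ell_i}$ into the congruence. Your route is shorter and more conceptual, avoids all Jordan-form bookkeeping, and makes the role of positivity of $\beta,\gamma$ transparent (your counterexamples $P(\lambda)=\lambda$, $P(\lambda)=\lambda^2$ are apt); what it costs is the need to cite, carefully, the equivalence between the GLR selfadjoint-Jordan-triple definition of the (classical) sign characteristic used in the paper and the local-congruence description you exploit — a nontrivial equivalence that you honestly flag as ``the main obstacle.'' The paper's proof, while more computational, is self-contained given the machinery of \cite{SC} and avoids that importation. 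Your Jordan-chain alternative at the end (replacing $x_i$ by $\gamma^i x_i$) is also correct and is, in spirit, much closer to the paper's computation, though you are right that tracking the powers of $\gamma$ through the Gram matrix is more delicate than the germ argument.
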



\section{Some definitions and auxiliary  results}\label{sec-aux}

We  introduce some  concepts and technical results that will be used in the proofs of our main results. 

If  $a$ and $b$ are two positive integers such that $a\leq b$, we define
$$a:b := a, a+1, \ldots, b.$$
The following result is an immediate consequence of the Cauchy-Schwarz inequality when the standard inner product is considered in $\mathbb{C}^n$.
\begin{lemma}\label{tech1}
Let $m$ be a positive integer and let $a$ be a positive real number. Then,
\begin{align*}
\left(\sum_{j=0}^{m}a^j\right)^2 \leq (m+1) \sum_{j=0}^{m} a^{2j}.
\end{align*}
\end{lemma}


The following property is well known (see Lemma 3.5 in \cite{backward} for  a proof of the second inequality). 

\begin{proposition}\label{prop}
For any complex $l\times m$ block-matrix $B=(B_{ij})$ we have 
\begin{equation}\label{bound-norm}
\max_{i,j} \|B_{ij}\|_2 \leq \|B\|_2 \leq \sqrt{lm}\;  \max_{i,j} \|B_{ij}\|_2.
\end{equation}
\end{proposition}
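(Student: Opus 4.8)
The plan is to prove the two inequalities in \eqref{bound-norm} separately, each by an elementary argument exploiting the block structure. Throughout I would fix notation for conformal partitions: write a generic vector $x$ in the domain of $B$ as $x=(x_1^T,\dots,x_m^T)^T$ partitioned according to the block-columns of $B$, so that $\|x\|_2^2=\sum_{j=1}^m\|x_j\|_2^2$ and the $i$-th block of $Bx$ is $\sum_{j=1}^m B_{ij}x_j$; similarly partition vectors in the codomain according to the block-rows of $B$. Note that the blocks $B_{ij}$ need not be square nor of uniform size, so one must keep track of their dimensions, but this causes no real difficulty.

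For the lower bound $\max_{i,j}\|B_{ij}\|_2\le\|B\|_2$, I would fix $i,j$ and an arbitrary vector $v$ of the size of the $j$-th block-column. Let $\widetilde v$ be obtained from $v$ by placing $v$ in the $j$-th block position and zero elsewhere; then $\|\widetilde v\|_2=\|v\|_2$ and the $i$-th block of $B\widetilde v$ is precisely $B_{ij}v$. Hence $\|B_{ij}v\|_2\le\|B\widetilde v\|_2\le\|B\|_2\,\|\widetilde v\|_2=\|B\|_2\,\|v\|_2$, and taking the supremum over unit vectors $v$ gives $\|B_{ij}\|_2\le\|B\|_2$. Since $i,j$ were arbitrary, this proves the first inequality.

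For the upper bound $\|B\|_2\le\sqrt{lm}\,\max_{i,j}\|B_{ij}\|_2$, set $M:=\max_{i,j}\|B_{ij}\|_2$ and let $x=(x_1^T,\dots,x_m^T)^T$ be arbitrary. For each block-row $i$, the triangle inequality and submultiplicativity of the spectral norm give $\|(Bx)_i\|_2\le\sum_{j=1}^m\|B_{ij}\|_2\,\|x_j\|_2\le M\sum_{j=1}^m\|x_j\|_2$, and one application of the Cauchy--Schwarz inequality (to the vectors $(1,\dots,1)$ and $(\|x_1\|_2,\dots,\|x_m\|_2)$ in $\mathbb{R}^m$) yields $\sum_{j=1}^m\|x_j\|_2\le\sqrt{m}\,\big(\sum_{j=1}^m\|x_j\|_2^2\big)^{1/2}=\sqrt{m}\,\|x\|_2$. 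Squaring and summing over the $l$ block-rows,
\[
\|Bx\|_2^2=\sum_{i=1}^l\|(Bx)_i\|_2^2\le\sum_{i=1}^l m M^2\|x\|_2^2=lm\,M^2\|x\|_2^2,
\]
so $\|Bx\|_2\le\sqrt{lm}\,M\,\|x\|_2$ for every $x$, which gives the second inequality.

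There is no genuine obstacle here: the argument is routine. The only points requiring a little care are keeping the two distinct roles of the Cauchy--Schwarz inequality straight (it is applied once, over the $m$ block-columns, to pass from an $\ell^1$-sum to an $\ell^2$-sum; the factor $\sqrt{l}$ from the $l$ block-rows then comes simply from summing $l$ copies of the same bound) and the bookkeeping of the possibly non-uniform block sizes. Alternatively, one could derive the upper bound by writing $B=\sum_{i,j}E_{ij}$ with $E_{ij}$ having $(i,j)$-block $B_{ij}$ and zeros elsewhere and applying the triangle inequality, but that only yields the weaker constant $lm$, so the two-step Cauchy--Schwarz argument above is preferable.
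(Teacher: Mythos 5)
The paper does not prove this proposition; it states it as ``well known'' and refers to \cite[Lemma~3.5]{backward} for the second inequality, so there is no internal proof to compare against. Your argument is correct and complete: the embedding trick for the lower bound and the single application of Cauchy--Schwarz across the $m$ block-columns, followed by summation over the $l$ block-rows, is exactly the standard route to the sharp constant $\sqrt{lm}$. One small remark: an equivalent and slightly slicker way to see the upper bound, which is the form often used in the cited reference, is to pass to the $l\times m$ nonnegative matrix $\widehat B$ with entries $\|B_{ij}\|_2$, observe that $\|Bx\|_2\le\|\widehat B\,\widehat x\|_2$ where $\widehat x=(\|x_1\|_2,\dots,\|x_m\|_2)^T$, and then bound $\|\widehat B\|_2\le\|\widehat B\|_F\le\sqrt{lm}\max_{i,j}\|B_{ij}\|_2$; your two-step Cauchy--Schwarz computation is the unrolled version of the same estimate, and both are preferable to the naive triangle-inequality decomposition you rightly reject, which only gives the constant $lm$.
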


Given a matrix polynomial $P(\lambda)$ of degree  $k$ as in (\ref{pol}), the \emph{$i$th Horner shift of $P(\lambda)$},  $i=0:k$, is given by
\begin{equation}\label{pol-Pi}
P_{i}(\lambda):=\lambda^{i}A_{k}+\lambda^{i-1}A_{k-1}+\cdots+\lambda
A_{k-i+1}+A_{k-i}.
\end{equation}
Notice that
$P_{0}(\lambda)=A_{k}$,  $P_{k}(\lambda)=P(\lambda)$, and 
\begin{equation}\label{horner}
P_{i+1}(\lambda)-A_{k-i-1}=\lambda P_{i}(\lambda), \quad i=0:k-1.
\end{equation}
 When convenient, we  write $P_i$ to denote $P_i(\lambda)$. 
  We also denote 
  \begin{equation}\label{pol-Pii}
P^i(\lambda):=\lambda^i A_i + \cdots + \lambda A_1 + A_0, \quad i=0:k.
\end{equation}
Notice that $P^0(\lambda)=A_0$ and $P^k(\lambda)=P(\lambda)$.


\begin{lemma}\label{PrPr}
Let $P(\lambda)$ be a regular matrix polynomial of degree $k$ as in (\ref{pol}).  Let $P_i(\lambda)$ and $P^i(\lambda)$, $i=0:k$, be the matrix polynomials defined in (\ref{pol-Pi}) and (\ref{pol-Pii}). Let $\delta$ be a nonzero, finite  eigenvalue of $P(\lambda)$, and let $x$  and $y$  be, respectively,  a right  and left eigenvector of $P(\lambda)$ associated with $\delta$. Then,
$$P_i(\delta) x = -\delta^{i-k} P^{k-i-1}(\delta) x, \ \quad  \textrm{and} \quad  y^* P_i(\delta)= -\delta^{i-k} y^* P^{k-i-1}(\delta), \quad i=0:k-1.$$
\end{lemma}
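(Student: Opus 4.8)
The plan is to prove both identities by exploiting the basic relations satisfied by the Horner shifts $P_i(\lambda)$ and the "forward" partial sums $P^i(\lambda)$, together with the eigenvalue equation $P(\delta)x = 0$. The key algebraic observation is that these two families of polynomials fit together to reconstruct $P(\lambda)$: for every $i=0:k$ one has
\begin{equation*}
\lambda^{k-i}\, P^{k-i-1}(\lambda) + \lambda^{i}\, \widehat{P}_i(\lambda) = P(\lambda),
\end{equation*}
where the second summand collects the top $i+1$ coefficients $A_{k},\ldots,A_{k-i}$; but $\widehat{P}_i(\lambda)$ is exactly $\lambda^{i} P_i(\lambda)$ divided by $\lambda^i$ — more precisely $\lambda^{k-i} P^{k-i-1}(\lambda) + P_i(\lambda) = P(\lambda)$ once one is careful with the bookkeeping of which power of $\lambda$ multiplies which coefficient. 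Writing out the definitions (\ref{pol-Pi}) and (\ref{pol-Pii}) termwise, $P^{k-i-1}(\lambda) = \sum_{j=0}^{k-i-1}\lambda^j A_j$ and $P_i(\lambda) = \sum_{j=0}^{i}\lambda^j A_{k-i+j}$, so $\lambda^{k-i}P^{k-i-1}(\lambda)$ supplies the coefficients $A_0,\ldots,A_{k-i-1}$ on the monomials $\lambda^{k-i},\ldots,\lambda^{k-1}$, and this does \emph{not} immediately equal $P(\lambda)-P_i(\lambda)$ unless the degrees match. The clean relation to extract is therefore
\begin{equation*}
P(\lambda) = \lambda^{k-i} P^{k-i-1}(\lambda) + \text{(terms } \lambda^{k-i},\ldots,\lambda^k\text{)} ,
\end{equation*}
and one checks that the parenthesized part equals $\lambda^{k-i} \cdot \big(\lambda^{i}A_k + \cdots + A_{k-i}\big) = \lambda^{k-i}\,\widetilde{P}_i(\lambda)$ where $\widetilde{P}_i$ has the same coefficients as $P_i$ but shifted; in fact $\widetilde P_i(\lambda) = P_i(\lambda)$ after reindexing. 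This gives the master identity $P(\lambda) = \lambda^{k-i} P^{k-i-1}(\lambda) + \lambda^{k-i} P_i(\lambda)$ — wait, that over-counts — so the careful statement is $P(\lambda) = \lambda^{k-i}\bigl(P_i(\lambda)\bigr) + \bigl(\text{lower part}\bigr)$ with lower part $= P^{k-i-1}(\lambda)$, i.e.
\begin{equation}\label{master}
P(\lambda) = \lambda^{k-i}\, P_i(\lambda) + P^{k-i-1}(\lambda), \qquad i=0:k-1.
\end{equation}
Comparing coefficients of $\lambda^j$ on both sides confirms (\ref{master}): for $0\le j\le k-i-1$ the right side contributes $A_j$ from $P^{k-i-1}$, and for $k-i\le j\le k$ it contributes the coefficient $A_{j}$ from $\lambda^{k-i}P_i(\lambda)$. (Alternatively, (\ref{master}) follows by induction on $i$ from the Horner recursion (\ref{horner}).)

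Granting (\ref{master}), the proof of the first identity is then one line: evaluate at $\lambda=\delta$ and apply to $x$. Since $P(\delta)x=0$ we get $0 = \delta^{k-i} P_i(\delta)x + P^{k-i-1}(\delta)x$, and because $\delta\neq 0$ we may divide by $\delta^{k-i}$ to obtain $P_i(\delta)x = -\delta^{i-k} P^{k-i-1}(\delta)x$, which is exactly the claim for $i=0:k-1$. The left-eigenvector identity is obtained identically: right-multiply (\ref{master}) evaluated at $\delta$ by $x$ is replaced by left-multiplying by $y^*$, use $y^*P(\delta)=0$, and divide by $\delta^{k-i}$ to get $y^*P_i(\delta) = -\delta^{i-k} y^* P^{k-i-1}(\delta)$.

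The only real work, and hence the step I expect to require the most care, is establishing the master identity (\ref{master}) with the indices aligned correctly — keeping straight which of $P_i$ and $P^i$ carries the \emph{leading} coefficients versus the \emph{trailing} ones, and making sure the power $\lambda^{k-i}$ (rather than $\lambda^{i}$ or $\lambda^{i+1}$) is the right multiplier so that the exponent $i-k$ appears in the final formula. This is purely a matter of bookkeeping with finite sums; once (\ref{master}) is written down correctly, everything else is a trivial substitution using the eigenvalue equations and the hypothesis $\delta\neq 0$. I would present (\ref{master}) either by the direct coefficient comparison sketched above or, if a slicker derivation is preferred, by a short induction on $i$ using (\ref{horner}): the base case $i=0$ reads $P(\lambda) = \lambda^k A_k + P^{k-1}(\lambda)$, which is the definition of $P(\lambda)$, and the inductive step multiplies the relation $P_{i+1}(\lambda) = \lambda P_i(\lambda) + A_{k-i-1}$ by $\lambda^{k-i-1}$ and combines it with $P^{k-i-1}(\lambda) = \lambda^{k-i-1}A_{k-i-1} + P^{k-i-2}(\lambda)$.
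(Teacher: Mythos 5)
Your proof is correct and uses exactly the same decomposition $P(\lambda) = \lambda^{k-i} P_i(\lambda) + P^{k-i-1}(\lambda)$ that the paper uses, followed by evaluation at $\delta$ and the eigenvector relations with $\delta \neq 0$. The only difference is presentational: the paper states the identity in one line with the verification left implicit, while you work through the coefficient bookkeeping (with some visible false starts along the way) before arriving at the same clean statement.
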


\begin{proof}
Note that, for $i=0:k-1$, we have $P(\delta)=\delta^{k-i} P_i (\delta)+ P^{k-i-1}(\delta)$. Thus, the result follows taking into account that $\delta$ is nonzero,  
$P(\delta)x=0$, and $y^* P(\delta)=0$, since  $x$ and $y$ are, respectively,   a right  and a left eigenvector of $P(\lambda)$ associated with $\delta$.
\end{proof}

The next lemma can be easily verified.

\begin{lemma}\label{Pjibound}
Let $P(\lambda)$ be a  matrix polynomial of degree $k$ as in (\ref{pol}), let $\delta \in \mathbb{C}$, and let $P_i(\lambda)$ and $P^i(\lambda)$, $i=0:k$, be the matrix polynomials defined in (\ref{pol-Pi}) and (\ref{pol-Pii}). Then, 
 $$\|P_i(\delta)\|_2, \|P^i(\delta)\|_2 \leq \max_{j=0:k}\{\|A_j\|_2 \} \sum_{j=0}^i |\delta|^j, \quad i=0:k.$$
\end{lemma}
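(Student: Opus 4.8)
The plan is to bound each of $\|P_i(\delta)\|_2$ and $\|P^i(\delta)\|_2$ directly from the definitions in (\ref{pol-Pi}) and (\ref{pol-Pii}), using only the triangle inequality for the spectral norm and the homogeneity $\|cA\|_2 = |c|\,\|A\|_2$. There is essentially no obstacle here; the lemma is stated as ``easily verified,'' and the only thing to be careful about is getting the index bookkeeping right so that the sum $\sum_{j=0}^i |\delta|^j$ comes out with exactly $i+1$ terms in each case.

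First I would treat $P^i(\lambda) = \lambda^i A_i + \cdots + \lambda A_1 + A_0$. Evaluating at $\delta$ and applying the triangle inequality gives
\[
\|P^i(\delta)\|_2 \le \sum_{j=0}^{i} |\delta|^j \, \|A_j\|_2 \le \left(\max_{j=0:k} \|A_j\|_2\right) \sum_{j=0}^{i} |\delta|^j ,
\]
since each $A_j$ appearing ($j = 0, \ldots, i$) satisfies $\|A_j\|_2 \le \max_{j=0:k}\|A_j\|_2$. This is immediate and handles the second family.

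Next I would treat $P_i(\lambda) = \lambda^i A_k + \lambda^{i-1} A_{k-1} + \cdots + \lambda A_{k-i+1} + A_{k-i}$, i.e. $P_i(\lambda) = \sum_{j=0}^{i} \lambda^j A_{k-i+j}$ after reindexing, or more conveniently $P_i(\lambda) = \sum_{r=0}^{i} \lambda^{i-r} A_{k-r}$. Evaluating at $\delta$ and applying the triangle inequality,
\[
\|P_i(\delta)\|_2 \le \sum_{r=0}^{i} |\delta|^{i-r} \, \|A_{k-r}\|_2 \le \left(\max_{j=0:k} \|A_j\|_2\right) \sum_{r=0}^{i} |\delta|^{i-r} = \left(\max_{j=0:k} \|A_j\|_2\right) \sum_{j=0}^{i} |\delta|^{j},
\]
where the last equality is just the substitution $j = i - r$, which relabels the same set of powers $\{|\delta|^0, \ldots, |\delta|^i\}$. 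This completes both bounds, so the lemma follows. The only ``hard part'' — really just a matter of care — is ensuring the reindexing of $P_i$ is done correctly so that the exponents of $|\delta|$ range over exactly $0$ through $i$; everything else is the triangle inequality and the definition of the maximum.
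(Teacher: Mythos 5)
Your proof is correct and is exactly the routine verification the paper has in mind when it says the lemma ``can be easily verified'': triangle inequality plus norm homogeneity on each term of $P_i(\delta)$ and $P^i(\delta)$, then bound each $\|A_j\|_2$ by the maximum. The reindexing of $P_i$ is handled carefully and correctly, so nothing is missing.
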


We close this section with two combinatorial lemmas that will be used later in the proofs of our main results.

\begin{lemma}\label{Deltax}
Let $P(\lambda)$ be an  $n\times n$  matrix polynomial  of odd degree $k$ as in (\ref{pol}), let $P_i(\lambda)$, $i=0:k$, be the Horner shifts defined in (\ref{pol-Pi}),   and let 
\begin{equation}\label{Delta}
\Delta^{\mathcal{B}}(\lambda): = [ \lambda^{\frac{k-1}{2}} I_n, \lambda^{\frac{k-1}{2}} P_1, \lambda^{\frac{k-3}{2}} I_n, \lambda^{\frac{k-3}{2}} P_3, \ldots, \lambda I_n, \lambda P_{k-2}, I_n],
\end{equation}
where $\Delta^{\mathcal{B}}(\lambda)$ denotes the block-transpose of $\Delta(\lambda)$ when viewed as a $k\times 1$ block-matrix whose blocks are $n\times n$.  Let $\delta \in \mathbb{C}$. Then, 
\begin{equation}\label{boundDelta}
\|\Delta(\delta)\|_2\leq \sqrt{d_1(\delta)}\;  \max_{i=0:k} \{1, \|A_i\|_2\},
\end{equation}
where 
\begin{equation}\label{Delta1}
d_1(\delta)=\sum_{r=0}^{\frac{k-1}{2}} |\delta|^{2r} + \sum_{r=1}^{\frac{k-1}{2}} \left( (k-2r+1) \sum_{s=r}^{k-r} |\delta|^{2s} \right).
\end{equation}

\end{lemma}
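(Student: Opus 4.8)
The plan is to bound $\|\Delta(\delta)\|_2$ by first observing that $\Delta^{\mathcal{B}}(\delta)$, the block-transpose, is a $1\times k$ block-row whose $n\times n$ blocks are (reading in pairs) $\delta^{\frac{k-1}{2}-(r-1)} I_n$ and $\delta^{\frac{k-1}{2}-(r-1)} P_{2r-1}(\delta)$ for $r=1:\frac{k-1}{2}$, together with the trailing block $I_n$. Since $\|\Delta(\delta)\|_2 = \|\Delta^{\mathcal{B}}(\delta)\|_2$ and the spectral norm of a block-row equals the square root of the norm of the sum of $B_j B_j^*$ over its blocks, the cleanest route is to use $\|\Delta^{\mathcal{B}}(\delta)\|_2^2 = \|\sum_j B_j B_j^*\|_2 \le \sum_j \|B_j\|_2^2$, i.e. $\|\Delta(\delta)\|_2^2 \le \sum_{j} \|B_j\|_2^2$ where the sum runs over all the $n\times n$ blocks of $\Delta(\delta)$.

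Next I would estimate each block norm. The identity blocks contribute $\|\delta^{\frac{k-1}{2}-(r-1)} I_n\|_2^2 = |\delta|^{2(\frac{k-1}{2}-r+1)} = |\delta|^{k-2r+1}$. For the Horner-shift blocks, Lemma~\ref{Pjibound} gives $\|P_{2r-1}(\delta)\|_2 \le \max_{j}\{\|A_j\|_2\} \sum_{s=0}^{2r-1}|\delta|^s$, so $\|\delta^{\frac{k-1}{2}-r+1} P_{2r-1}(\delta)\|_2 \le |\delta|^{\frac{k-1}{2}-r+1}\,\max_{j}\{\|A_j\|_2\}\sum_{s=0}^{2r-1}|\delta|^{s}$. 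Applying Lemma~\ref{tech1} to $\left(\sum_{s=0}^{2r-1}|\delta|^s\right)^2 \le 2r \sum_{s=0}^{2r-1}|\delta|^{2s}$ and multiplying by $|\delta|^{2(\frac{k-1}{2}-r+1)} = |\delta|^{k-2r+1}$, I get $|\delta|^{k-2r+1}\sum_{s=0}^{2r-1}|\delta|^{2s} = \sum_{s=0}^{2r-1}|\delta|^{k-2r+1+2s}$; reindexing $t = s + \frac{k-2r+1}{2}$ (note $k$ odd makes $k-2r+1$ even, so this is an integer shift running over integers from $\frac{k-2r+1}{2} = \frac{k+1}{2}-r$ up to $\frac{k+1}{2}-r + (2r-1) = \frac{k-1}{2}+r$) turns this into a sum of consecutive even powers of $|\delta|$. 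A small bookkeeping check is needed to match this with the range $s=r$ to $k-r$ appearing in $d_1(\delta)$; after bounding $\max\{\|A_j\|_2\}^2 \le \max_{i}\{1,\|A_i\|_2\}^2$ and $1 \le \max_{i}\{1,\|A_i\|_2\}^2$, everything factors out as $\max_{i}\{1,\|A_i\|_2\}^2$.

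Then I would assemble the total: the trailing $I_n$ and the $\frac{k-1}{2}$ leading identity blocks contribute $\sum_{r=1}^{\frac{k-1}{2}}|\delta|^{k-2r+1}$ plus $1$, which after reindexing $r\mapsto \frac{k+1}{2}-r$ becomes $\sum_{r=0}^{\frac{k-1}{2}}|\delta|^{2r}$ (the first sum in $d_1(\delta)$); and the $\frac{k-1}{2}$ Horner blocks contribute at most $\sum_{r=1}^{\frac{k-1}{2}} 2r \sum_{s=r}^{k-r}|\delta|^{2s}$, which must be shown to be $\le \sum_{r=1}^{\frac{k-1}{2}} (k-2r+1)\sum_{s=r}^{k-r}|\delta|^{2s}$. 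This is where the main obstacle lies: the factor emerging naturally from Cauchy–Schwarz is $2r$, while $d_1(\delta)$ carries the factor $k-2r+1$, and these do not agree term by term ($2r \le k-2r+1$ only for $r \le \frac{k+1}{4}$). The resolution is to also fold in the contribution of the identity blocks more carefully, or — more likely — to re-pair the sum so that the identity block sitting \emph{before} each Horner block $P_{2r-1}$ is grouped with the power-range of that Horner block; combining the geometric-series bound for the pair and re-indexing should produce exactly the coefficient $k-2r+1$. I expect the bulk of the proof to be this reindexing/regrouping argument, with the matrix-norm and Cauchy–Schwarz inputs being routine; so in writing it up I would set up the block-row norm bound, cite Lemmas~\ref{tech1} and~\ref{Pjibound} for the per-block estimates, and devote the main effort to the index manipulation that collapses the two sums into the stated $d_1(\delta)$, finally taking square roots to obtain (\ref{boundDelta}).
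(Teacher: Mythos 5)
Your approach is the paper's approach in disguise: bound $\|\Delta(\delta)\|_2^2$ by the sum of squared block norms (the paper does the equivalent thing by estimating $\|\Delta(\delta)x\|_2^2/\|x\|_2^2$), estimate each Horner block via Lemma~\ref{Pjibound} and Lemma~\ref{tech1}, and reindex. So the route is right; the problem is the final paragraph, where you declare a ``main obstacle'' that is not actually there.

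The obstacle is an indexing artifact. You parameterize the Horner blocks as $\delta^{\frac{k+1}{2}-r}P_{2r-1}$ for $r=1:\frac{k-1}{2}$, whereas the $d_1(\delta)$ in (\ref{Delta1}) is written for the parameterization $\delta^{r}P_{k-2r}$, $r=1:\frac{k-1}{2}$. These two run through the same pairs in opposite order, related by $r \leftrightarrow \frac{k+1}{2}-r$. In your own middle paragraph you correctly computed that, in your indexing, the inner sum of even powers runs over $t=\frac{k+1}{2}-r$ up to $\frac{k-1}{2}+r$ — but then in the final paragraph you silently replaced this range by $s=r$ to $k-r$ (which is the paper's range in the paper's indexing) while keeping your coefficient $2r$. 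Mixing the two gives the spurious mismatch $2r$ vs.\ $k-2r+1$. If you substitute $r'=\frac{k+1}{2}-r$ throughout, the coefficient becomes $2r = 2\bigl(\frac{k+1}{2}-r'\bigr)=k-2r'+1$, and the inner range $\bigl[\frac{k+1}{2}-r,\,\frac{k-1}{2}+r\bigr]$ becomes $[r',\,k-r']$, so your Horner contribution is \emph{identically} $\sum_{r'=1}^{\frac{k-1}{2}}(k-2r'+1)\sum_{s=r'}^{k-r'}|\delta|^{2s}$, matching the second sum of $d_1(\delta)$ with no regrouping and no further inequality. Combined with the identity-block contribution $\sum_{r=0}^{\frac{k-1}{2}}|\delta|^{2r}$ that you already derived, taking square roots gives (\ref{boundDelta}). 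So the proof is complete once you carry your own indexing through consistently; there is no re-pairing to invent.
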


\begin{proof}
Let $\delta \in \mathbb{C}$ and  $0\neq x \in \mathbb{C}^n$. 
Taking into account the definition of $\Delta(\lambda)$, we get
\begin{align}
\|\Delta(\delta) x\|_2^2& =\sum_{r=0}^{\frac{k-1}{2}} |\delta|^{2r} \|x\|_2^2 + \sum_{r=1}^{\frac{k-1}{2}} |\delta|^{2r} \|P_{k-2r}(\delta)x\|_2^2 \label{first-line}\\
& \leq \left[ \sum_{r=0}^{\frac{k-1}{2}} |\delta|^{2r}  + \sum_{r=1}^{\frac{k-1}{2}} |\delta|^{2r} \|P_{k-2r}(\delta)\|_2^2\right] \|x\|_2^2.
\end{align}

By Lemmas \ref{Pjibound} and \ref{tech1}, we obtain
\begin{align*}
\left(\frac{\|\Delta(\delta) x\|_2}{\|x\|_2}\right)^2 &\leq 
\sum_{r=0}^{\frac{k-1}{2}} |\delta|^{2r}  + \sum_{r=1}^{\frac{k-1}{2}} |\delta|^{2r} \left( \max_{i=0:k}\{\|A_i\|_2\} \sum_{s=0}^{k-2r} |\delta|^s\right)^2 \\
&\leq \left[ \sum_{r=0}^{\frac{k-1}{2}} |\delta|^{2r}  + \sum_{r=1}^{\frac{k-1}{2}} \left( (k-2r+1) \sum_{s=r}^{k-r} |\delta|^{2s}\right) \right]\max_{i=0:k} \{1, \|A_i\|_2\}^2\\
&= d_1(\delta) \max_{i=0:k} \{\|A_i\|_2,1\}^2.
\end{align*}
Since $\|\Delta(\delta)\|_2 = sup_{x\neq 0} \frac{\|\Delta(\delta)x\|_2}{\|x\|_2},$  (\ref{boundDelta}) follows. \end{proof}

\begin{lemma}\label{d1bound}
Let  $\delta \in \mathbb{C}$ be nonzero and let $k\geq 3$ be a positive odd  integer. Let $d_1(\lambda)$  be as in (\ref{Delta1}). 
If $|\delta| \leq 1$, then 
\begin{equation}\label{d1boundeq}
 d_1(\delta) \leq \frac{k+1}{2}+\frac{(k-1)^3}{2} |\delta|^{2}.
\end{equation}
\end{lemma}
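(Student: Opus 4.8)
The plan is to estimate the two sums in the expression (\ref{Delta1}) for $d_1(\delta)$ separately, using only the hypothesis $|\delta|\le 1$, and then to absorb the resulting polynomial in $k$ into the claimed constant $\frac{(k-1)^3}{2}$.

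First I would handle the sum $\sum_{r=0}^{(k-1)/2}|\delta|^{2r}$. Since $|\delta|\le 1$, each term is at most $1$, and there are $\frac{k-1}{2}+1=\frac{k+1}{2}$ terms, so this sum is bounded by $\frac{k+1}{2}$, which is exactly the first term on the right-hand side of (\ref{d1boundeq}). For the double sum, fix $r$ with $1\le r\le\frac{k-1}{2}$. In the inner sum $\sum_{s=r}^{k-r}|\delta|^{2s}$ every exponent satisfies $2s\ge 2r\ge 2$, hence $|\delta|^{2s}\le|\delta|^{2}$ because $|\delta|\le 1$; moreover there are $k-2r+1$ terms. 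Therefore the $r$-th summand is at most $(k-2r+1)\cdot(k-2r+1)|\delta|^{2}=(k-2r+1)^{2}|\delta|^{2}$, and summing over $r$ gives
\[
\sum_{r=1}^{\frac{k-1}{2}}\Big((k-2r+1)\sum_{s=r}^{k-r}|\delta|^{2s}\Big)\ \le\ |\delta|^{2}\sum_{r=1}^{\frac{k-1}{2}}(k-2r+1)^{2}.
\]

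Next I would evaluate the sum $S:=\sum_{r=1}^{(k-1)/2}(k-2r+1)^{2}$ in closed form. As $r$ runs from $1$ to $\frac{k-1}{2}$, the quantity $k-2r+1$ runs through the even integers $k-1,k-3,\dots,2$, so the substitution $j=\frac{k+1}{2}-r$ turns $S$ into $\sum_{j=1}^{(k-1)/2}(2j)^{2}=4\sum_{j=1}^{(k-1)/2}j^{2}$. Using $\sum_{j=1}^{m}j^{2}=\frac{m(m+1)(2m+1)}{6}$ with $m=\frac{k-1}{2}$ (so that $2m=k-1$, $m+1=\frac{k+1}{2}$, $2m+1=k$) yields $S=\frac{k(k-1)(k+1)}{6}$. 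It then remains to verify $\frac{k(k-1)(k+1)}{6}\le\frac{(k-1)^{3}}{2}$ for all odd $k\ge 3$; dividing by $k-1>0$ this is equivalent to $k(k+1)\le 3(k-1)^{2}$, i.e.\ to $0\le 2k^{2}-7k+3=(2k-1)(k-3)$, which holds since both factors are nonnegative for $k\ge 3$. Combining the three estimates gives $d_1(\delta)\le\frac{k+1}{2}+\frac{(k-1)^{3}}{2}|\delta|^{2}$.

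The only mildly technical point is the bookkeeping in evaluating $S$ (the reindexing and the sum-of-squares formula) together with the final polynomial inequality $(2k-1)(k-3)\ge 0$; everything else is a direct consequence of $|\delta|\le 1$ and term-counting, so I do not anticipate any real obstacle.
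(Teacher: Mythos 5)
Your proof is correct. It differs from the paper's argument in how the double sum is estimated: the paper uniformly replaces $k-2r+1$ by its maximum $k-1$ and the inner sum $\sum_{s=r}^{k-r}|\delta|^{2s}$ by $\sum_{s=1}^{k-1}|\delta|^{2s}$, then factors out $|\delta|^2$ and bounds the remaining geometric tail by $k-1$, obtaining $\frac{(k-1)^3}{2}|\delta|^2$ directly (and then handles $|\delta|=1$ by continuity). You instead bound each inner sum by $(k-2r+1)|\delta|^{2}$, evaluate $\sum_{r=1}^{(k-1)/2}(k-2r+1)^2=\frac{k(k-1)(k+1)}{6}$ exactly, and then weaken $\frac{k(k^2-1)}{6}$ to $\frac{(k-1)^3}{2}$ via the factorization $2k^2-7k+3=(2k-1)(k-3)\ge 0$. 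Your route produces a genuinely sharper intermediate bound $\frac{k+1}{2}+\frac{k(k^2-1)}{6}|\delta|^2$ and works uniformly for all $|\delta|\le 1$ without the separate continuity step; the price is the extra bookkeeping of the reindexing, the sum-of-squares formula, and the final polynomial inequality, all of which you carry out correctly.
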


\begin{proof}
Assume that $|\delta| <1$. Then, 
\begin{align*}
\sum_{r=1}^{\frac{k-1}{2}} \left( (k-2r+1) \sum_{s=r}^{k-r} |\delta|^{2s} \right) &\leq    \frac{(k-1)^2}{2}  \sum_{s=1}^{k-1} |\delta|^{2s} \\
&= \frac{(k-1)^2}{2}|\delta|^2 \sum_{s=1}^{k-1} |\delta|^{2(s-1)} \leq \frac{(k-1)^3}{2} |\delta|^{2}.
 \end{align*}
 Thus, from (\ref{Delta1}), (\ref{d1boundeq}) follows.
 The case $|\delta|=1$ follows by using a continuity argument. 
\end{proof}



\section{Block-symmetric block-tridiagonal generalized Fiedler pencils} \label{GFP}

In this section we present the block-symmetric  linearizations of a matrix polynomial $P(\lambda)$ that are the focus of  this paper.  Here and in  the next section  we  only consider matrix polynomials $P(\lambda)$ of odd degree $k$ and we assume that $k >2$.  

The family of generalized Fiedler pencils (GFP) associated with a matrix polynomial $P(\lambda)$ was introduced in \cite{ant-vol04, bddsiam}.
The following block-symmetric block-tridiagonal GFP associated with an $n\times n$ $P(\lambda)$ of odd degree $k$  was presented in \cite[Theorem 3.1]{ant-vol04}:
\begin{equation}
\mathcal{T}_P(\lambda)=%
\begin{bmatrix}
\lambda A_{k}+A_{k-1} & -I_n &  &  &  &  &  & 0\\
-I_n & 0 & \lambda I_n &  &  &  &  & \\
& \lambda I_n & \lambda A_{k-2}+A_{k-3} & -I_n &  &  &  & \\
&  & -I_n & 0 &  &  &  & \\
&  &  &  & \ddots &  &  & \\
&  &  &  &  & \lambda A_{3}+A_{2} & -I_n & \\
&  &  &  &  & -I_n & 0 & \lambda I_n\\
0  &  &  &  &  &  & \lambda I_n & \lambda A_{1}+A_{0}%
\end{bmatrix}.
\label{GFPT}%
\end{equation}

Also, the following block-symmetric block-tridiagonal pencil associated with an $n\times n$ $P(\lambda)$ of odd degree $k$  was introduced in \cite[formula (5.1)]{4m-alt} :
\begin{equation}\label{GFP-R}
\mathcal{R}_P(\lambda ):= S R\mathcal{T}_P (\lambda)RS\end{equation}
where  
\begin{equation}\label{flipR}
R:=\left[ \begin{array}{ccc} & & I_n \\ & \iddots & \\ I_n & & \end{array}\right] \in \mathbb{C}^{nk \times nk}
\end{equation}
and $S$ is the $k\times k$  block-diagonal matrix whose $(i,i)$th block-entry is given by 
\begin{equation}\label{def-S}
S(i, i)=\left\{\begin{array}{rl} -I_n, & \textrm{if $i \equiv 0, 1$ mod 4,}\\ I_n, & \textrm{otherwise.} \end{array}\right.
\end{equation}


It is well-known \cite{ant-vol04, 4m-alt} that, for any matrix polynomial $P(\lambda)$ of odd degree $k$,  the pencils $\mathcal{T}_P(\lambda)$  and $\mathcal{R}_P(\lambda)$ are strong linearizations of  $P(\lambda)$.  
As mentioned in the introduction, these linearizations have  several attractive properties. In particular, 
 it is easy to recover an eigenvector of $P(\lambda)$ associated with an eigenvalue $\delta$ from an eigenvector of the linearizations associated with the same eigenvalue, as we show next. We start with a  technical  lemma that will be useful for this purpose. 

Here and in the next sections, we denote by $e_i$ the $i$th column of the identity matrix of appropriate size for the context. 


\begin{lemma}\label{GFP-ans}
Let $P(\lambda)$ be a matrix polynomial of odd degree $k$ as in (\ref{pol}) and let $\mathcal{T}_P(\lambda)$  be as in (\ref{GFPT}). Then,
\begin{equation}\label{T-ans}
\mathcal{T}_P(\lambda) \Delta(\lambda) = e_k \otimes P(\lambda), \quad \textrm{and} \quad 
\Delta^{\mathcal{B}}(\lambda) \mathcal{T}_P(\lambda) = e_k^T \otimes P(\lambda),
\end{equation}
where $\Delta^{\mathcal{B}}(\lambda)$ is   as in (\ref{Delta}). 

\end{lemma}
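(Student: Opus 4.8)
The plan is to verify the identity $\mathcal{T}_P(\lambda)\Delta(\lambda)=e_k\otimes P(\lambda)$ directly by computing the product block-row by block-row, using the block-tridiagonal structure of $\mathcal{T}_P(\lambda)$ in \eqref{GFPT} together with the Horner recurrence \eqref{horner}. The second identity will then follow immediately from the first by taking block-transposes: since $\mathcal{T}_P(\lambda)$ is block-symmetric, $\mathcal{T}_P(\lambda)^{\mathcal{B}}=\mathcal{T}_P(\lambda)$, so $(\mathcal{T}_P(\lambda)\Delta(\lambda))^{\mathcal{B}}=\Delta^{\mathcal{B}}(\lambda)\mathcal{T}_P(\lambda)$, and $(e_k\otimes P(\lambda))^{\mathcal{B}}=e_k^T\otimes P(\lambda)$ because $P(\lambda)$ is symmetric under block-transpose (it is a single $n\times n$ block).

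For the first identity, I would index the $k$ block-rows of $\mathcal{T}_P(\lambda)$ and the $k$ block-entries of $\Delta(\lambda)$ from $1$ to $k$. From \eqref{Delta}, the odd-indexed entries of $\Delta(\lambda)$ (positions $1,3,5,\dots,k$) are $\lambda^{(k-1)/2}I_n,\lambda^{(k-3)/2}I_n,\dots,\lambda I_n, I_n$, i.e.\ position $2j-1$ holds $\lambda^{(k+1)/2-j}I_n$ for $j=1:\tfrac{k+1}{2}$; the even-indexed entries (positions $2,4,\dots,k-1$) are $\lambda^{(k-1)/2}P_1,\lambda^{(k-3)/2}P_3,\dots,\lambda P_{k-2}$, i.e.\ position $2j$ holds $\lambda^{(k+1)/2-j}P_{2j-1}$ for $j=1:\tfrac{k-1}{2}$. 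There are three kinds of block-rows to check: the rows of the form $[\ \cdots\ -I_n\ \ 0\ \ \lambda I_n\ \cdots\ ]$ (the "$0$"-diagonal rows), the rows of the form $[\ \cdots\ \lambda I_n\ \ \lambda A_{k-2i}+A_{k-2i-1}\ \ -I_n\ \cdots\ ]$ (the interior "$\lambda A+A$"-diagonal rows), the first row $[\ \lambda A_k+A_{k-1}\ \ -I_n\ \ 0\ \cdots\ ]$, and the last row $[\ \cdots\ 0\ \ \lambda I_n\ \ \lambda A_1+A_0\ ]$. For each such row I would multiply against the appropriate three consecutive entries of $\Delta(\lambda)$ and show the result is $0$ for rows $1,\dots,k-1$ and equals $P(\lambda)$ for row $k$. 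The "$0$"-diagonal rows give telescoping cancellations of the form $-\lambda^{m}P_{2j-1}+\lambda\cdot\lambda^{m-1}P_{2j-1}=0$ or $-\lambda^{m}I_n+\lambda\cdot\lambda^{m-1}I_n=0$, and the "$\lambda A+A$"-diagonal rows give, after using \eqref{horner} in the form $P_{2j+1}-A_{k-2j-1}=\lambda P_{2j}$ and $\lambda P_{2j-1}+A_{k-2j}=P_{2j}\cdot$ appropriately (more precisely $(\lambda A_{k-2i}+A_{k-2i-1})$ times a power of $\lambda$ combines with the neighboring $\lambda I_n$ and $-I_n$ contributions to advance the Horner shift by two steps), cancellation to $0$; the last row produces $\lambda\cdot I_n$ times the second-to-last entry plus $(\lambda A_1+A_0)$ times $I_n$, which by the Horner recurrence telescopes down to $P_k(\lambda)=P(\lambda)$.

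The main obstacle, and the only place real care is needed, is bookkeeping the exponents of $\lambda$ and the parity-dependent indices $(k-1)/2$, $(k-3)/2$, etc., so that the telescoping in the interior rows lines up exactly — in particular checking that the "$\lambda A_{k-2i}+A_{k-2i-1}$" block on row $2i+1$, when combined with its two off-diagonal $\pm I_n$, $\lambda I_n$ neighbors acting on entries $\lambda^{m+1}P_{2i-1}$, $\lambda^{m}I_n$, $\lambda^{m}I_n$ of $\Delta(\lambda)$, reproduces exactly the two-step Horner step $\lambda^{2}P_{2i-1}\to P_{2i+1}$ via $\eqref{horner}$ applied twice. Once the indexing convention is fixed this is a routine but slightly tedious verification; I would present it by treating a generic interior $2\times 2$ block-window and then separately dispatching the first and last rows.
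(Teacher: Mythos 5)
The first identity is handled correctly: your plan to verify $\mathcal{T}_P(\lambda)\Delta(\lambda)=e_k\otimes P(\lambda)$ by multiplying block-row by block-row and telescoping via the Horner recurrence \eqref{horner} is essentially the same computation the paper carries out (the paper organizes it by splitting $\mathcal{T}_P=\lambda\mathcal{T}_1-\mathcal{T}_0$ and tabulating each piece separately, but this is only a bookkeeping difference), and your indexing of the entries of $\Delta(\lambda)$ is right.

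There is, however, a genuine gap in your deduction of the second identity. You write that since $\mathcal{T}_P(\lambda)^{\mathcal{B}}=\mathcal{T}_P(\lambda)$ one gets $(\mathcal{T}_P(\lambda)\Delta(\lambda))^{\mathcal{B}}=\Delta^{\mathcal{B}}(\lambda)\mathcal{T}_P(\lambda)$ "immediately." That step implicitly uses the rule $(AB)^{\mathcal{B}}=B^{\mathcal{B}}A^{\mathcal{B}}$, but for the block-transpose in this paper (which permutes block positions without transposing the blocks — note that $\Delta^{\mathcal{B}}$ contains $P_i$, not $P_i^T$), this rule is \emph{not} valid in general: $(AB)^{\mathcal{B}}_{ij}=\sum_l A_{jl}B_{li}$ whereas $(B^{\mathcal{B}}A^{\mathcal{B}})_{ij}=\sum_l B_{li}A_{jl}$, and these agree only when the blocks being multiplied commute. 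So block-symmetry of $\mathcal{T}_P$ alone does not give you what you need. The paper closes exactly this gap by observing that the blocks of $\mathcal{T}_P(\lambda)$ that actually multiply the non-scalar blocks $P_j$ of $\Delta(\lambda)$ (equivalently, by block-symmetry, the blocks in the even-indexed rows of $\mathcal{T}_P(\lambda)$) are all of the form $0$, $\pm I_n$, or $\lambda I_n$, which commute with every $P_j$. Adding this one sentence about commutativity would make your argument correct; as written it omits the key point.
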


\begin{proof}
Let $\mathcal{T}_P(\lambda):=\lambda \mathcal{T}_1 - \mathcal{T}_0$.  Using  (\ref{horner}), a direct computation shows that 
$$\mathcal{T}_1 \Delta(\lambda)= \left[ \begin{array}{c} \lambda^{\frac{k-1}{2}} P_0 \\ \lambda^{\frac{k-1}{2}-1}  I_n \\ \hline  \lambda^{\frac{k-1}{2}-1} P_2 \\ \lambda^{\frac{k-1}{2}-2} I_n \\ \hline \vdots \\\hline  \lambda P_{k-3} \\   I_n \\ \hline P_{k-1} \end{array} \right]\quad \textrm{and} \quad \mathcal{T}_0 \Delta(\lambda)= \left[ \begin{array}{c} \lambda^{\frac{k+1}{2}} P_0 \\ \lambda^{\frac{k-1}{2}}  I_n \\ \hline  \lambda^{\frac{k-1}{2}} P_2 \\ \lambda^{\frac{k-1}{2}-1} I_n \\ \hline \vdots \\\hline  \lambda^2 P_{k-3} \\  \lambda I_n \\ \hline -A_0 \end{array} \right],$$ 
where $P_i$, $i=0:k$, are the Horner shifts defined in (\ref{pol-Pi}). 

Taking into account (\ref{horner}) again and the fact that $P_k(\lambda)=P(\lambda)$,  the first claim in (\ref{T-ans}) follows. 
The second claim in  (\ref{T-ans})  follows easily from the first claim  by noting that 
$(\mathcal{T}_P(\lambda) \Delta(\lambda))^{\mathcal{B}} = \Delta^{\mathcal{B}}(\lambda) \mathcal{T}_P(\lambda)$, as the $i$th block-row of $\mathcal{T}_P(\lambda)$, with $i$ even, just 
contains blocks of the form $0,$ $I_n$, and $\lambda I_n$, and this type of blocks  commute with $P_j$. 
\end{proof}

 Note that  the equations  in  (\ref{T-ans}) are a  particular case of equation (2.11) in \cite{backward} (where the homogeneous approach is considered).

The following theorem follows from Lemma \ref{GFP-ans} using arguments similar to those in the proof of Theorem 3.8 in \cite{4m-vspace}. 

\begin{theorem}\label{eig-T}
Let $P(\lambda)$ be a  regular matrix polynomial of odd degree $k$ as in (\ref{pol}). Assume that  $\delta$ is a finite eigenvalue of $P(\lambda)$. Let $\Delta(\lambda)$ be as in (\ref{Delta}).
A vector $z$ is a right (resp. left) eigenvector of $\mathcal{T}_P(\lambda)$ associated with $\delta$  if and only if $z=\Delta(\delta) x$ (resp. $z=\overline{\Delta(\delta)}  y$),  for some right (resp. left) eigenvector $x$ (resp. $y$) of $P(\lambda)$ associated with $\delta$.  
\end{theorem}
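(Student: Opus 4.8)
The plan is to exploit the key identity $\mathcal{T}_P(\lambda)\Delta(\lambda) = e_k \otimes P(\lambda)$ established in Lemma \ref{GFP-ans}. First I would prove the ``if'' direction for right eigenvectors: if $x$ is a right eigenvector of $P(\lambda)$ for the finite eigenvalue $\delta$, then evaluating the identity at $\lambda = \delta$ and applying it to $x$ gives $\mathcal{T}_P(\delta)\Delta(\delta)x = e_k \otimes P(\delta)x = 0$. It remains to check $\Delta(\delta)x \neq 0$, which is immediate because the last block-entry of $\Delta(\delta)$ is $I_n$, so the last $n$ coordinates of $\Delta(\delta)x$ equal $x \neq 0$. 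Hence $z = \Delta(\delta)x$ is a right eigenvector of $\mathcal{T}_P(\lambda)$ for $\delta$.

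For the ``only if'' direction, I would use a dimension/rank count. Since $\delta$ is a finite eigenvalue of the regular polynomial $P(\lambda)$ and $\mathcal{T}_P(\lambda)$ is a strong linearization of $P(\lambda)$, the polynomial $P(\lambda)$ and the pencil $\mathcal{T}_P(\lambda)$ share the same finite elementary divisors; in particular the dimension of the right null space of $\mathcal{T}_P(\delta)$ equals the dimension of the right null space of $P(\delta)$, call it $m$ (this is the geometric multiplicity, $m \geq 1$). The ``if'' direction shows that $x \mapsto \Delta(\delta)x$ maps $\ker P(\delta)$ into $\ker \mathcal{T}_P(\delta)$, and this map is injective (again because the last block-component of $\Delta(\delta)x$ is $x$). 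An injective linear map between spaces of equal finite dimension $m$ is a bijection, so every element of $\ker \mathcal{T}_P(\delta)$ — in particular every right eigenvector $z$ of $\mathcal{T}_P(\lambda)$ for $\delta$ — is of the form $\Delta(\delta)x$ with $x \in \ker P(\delta)$, and $x \neq 0$ since $z \neq 0$. This is the argument ``similar to those in the proof of Theorem 3.8 in \cite{4m-vspace}'' referred to in the statement.

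The left-eigenvector case is handled by taking conjugate transposes. From $\Delta^{\mathcal{B}}(\lambda)\mathcal{T}_P(\lambda) = e_k^T \otimes P(\lambda)$ in Lemma \ref{GFP-ans}, and noting that $\mathcal{T}_P(\lambda)$ is block-symmetric so that the block-transpose of $\mathcal{T}_P(\delta)$ is $\mathcal{T}_P(\delta)$ itself (hence $\mathcal{T}_P(\delta)^* = \overline{\mathcal{T}_P(\delta)}$ up to the block-transpose bookkeeping), one gets $y^* \overline{\Delta(\delta)}^{\,*}$-type relations. Concretely: evaluating $\Delta^{\mathcal{B}}(\delta)\mathcal{T}_P(\delta) = e_k^T \otimes P(\delta)$ and conjugating appropriately yields that $z = \overline{\Delta(\delta)}\,y$ satisfies $z^* \mathcal{T}_P(\delta) = 0$ whenever $y^* P(\delta) = 0$, and nonvanishing follows as before from the $I_n$ block. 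The ``only if'' part again follows by equality of left-null-space dimensions together with injectivity of $y \mapsto \overline{\Delta(\delta)}\,y$.

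The main obstacle, and the only genuinely delicate point, is the ``only if'' direction: one must be careful to justify that the geometric multiplicity of $\delta$ as an eigenvalue of $\mathcal{T}_P(\lambda)$ equals that of $\delta$ as an eigenvalue of $P(\lambda)$. This is where the linearization property is used in an essential way — the elementary divisors are preserved, so in particular the partial multiplicity structure at $\delta$ coincides, forcing equality of null-space dimensions. Everything else (the $I_n$-block argument for injectivity and nonvanishing, the reduction to block-transposes for the left case) is routine once this point is in place.
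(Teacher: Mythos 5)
Your proof is correct and follows essentially the same route as the paper: both rely on the identity $\mathcal{T}_P(\lambda)\Delta(\lambda)=e_k\otimes P(\lambda)$ from Lemma \ref{GFP-ans} for the ``if'' direction, and both settle the ``only if'' direction by combining injectivity of $x\mapsto\Delta(\delta)x$ (via the final $I_n$ block) with equality of the geometric multiplicities of $\delta$ in $P(\lambda)$ and in $\mathcal{T}_P(\lambda)$, which holds because $\mathcal{T}_P(\lambda)$ is a linearization. The paper phrases this as ``a linearly independent set of the right cardinality is a basis'' while you phrase it as ``an injective map between spaces of equal finite dimension is a bijection,'' but these are the same argument.
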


\begin{proof}
Taking into account Lemma \ref{GFP-ans}, we have
\[
\mathcal{T}_{P}(\lambda)\Delta(\lambda)x=(e_{k}\otimes P(\lambda
))x=e_{k}\otimes P(\lambda)x.
\]

Clearly, if $\{x_{1},\ldots,x_{m}\}$ is a basis of the eigenspace associated with
the eigenvalue $\delta$ of $P(\lambda)$, then $\{\Delta(\delta)x_{1}
,\ldots,\Delta(\delta)x_{k}\}$ is linearly independent and, since $\delta$ has
the same geometric multiplicity as an eigenvalue of $P(\lambda)$ and as eigenvalue of
$\mathcal{T}_{P}(\lambda),$ because $\mathcal{T}_{P}(\lambda)$ is a linearization
of $P(\lambda)$, then $\{\Delta(\delta)x_{1},\ldots,\Delta(\delta)x_{k}\}$ is 
a  basis of the eigenspace associated with the eigenvalue $\delta$ of
$\mathcal{T}_{P}(\lambda).$ Thus, a vector $z$ is a right eigenvector of $\mathcal{T}_{P}
(\lambda)$ associated with $\delta$ if and only if it is a linear combination of $\Delta
(\delta)x_{1},\ldots,\Delta(\delta)x_{k}$, that is, it is of the form
$\Delta(\delta)x$ for some eigenvector $x$ of $P(\lambda)$ associated with
$\delta.$ 

The proof for left eigenvectors is similar.\end{proof}


Let $R$ be as in (\ref{flipR}), $S$ be as in (\ref{def-S}), and $\Delta(\lambda)$ be as  in (\ref{Delta}). From (\ref{GFP-R}) and Lemma \ref{GFP-ans}, we obtain
\begin{equation}\label{R-ans}
\mathcal{R}_P(\lambda)(SR \Delta(\lambda)) = (SR e_k) \otimes P(\lambda).
\end{equation}
Note that  $R^2=I$ and  $S^2=I$.
Based on this fact, we give next an expression for the right and left eigenvectors of $\mathcal{R}_P(\lambda)$.

\begin{theorem}\label{Reig}
Let $P(\lambda)$ be a regular matrix polynomial of odd degree $k$ as in (\ref{pol}). Assume that $\delta$ is a finite eigenvalue of $P(\lambda)$. Let $\Delta(\lambda)$ be as in (\ref{Delta}).
A vector $z$ is a right (resp. left) eigenvector of $\mathcal{R}_P(\lambda)$ associated with $\delta$ if and only if $z=SR \Delta(\delta) x$ (resp.
$z=SR \overline{\Delta(\delta)} y$), for some right  (resp. left) eigenvector $x$ (resp. $y$) of $P(\lambda)$ associated with $\delta$. 
\end{theorem}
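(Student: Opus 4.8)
The plan is to reduce the statement for $\mathcal{R}_P(\lambda)$ to the one already proved for $\mathcal{T}_P(\lambda)$ in Theorem \ref{eig-T}, using the defining relation $\mathcal{R}_P(\lambda) = S R \mathcal{T}_P(\lambda) R S$ together with the facts $R^2 = I$ and $S^2 = I$. First I would observe that, since $R$ and $S$ are constant invertible matrices, $\mathcal{R}_P(\lambda)$ and $\mathcal{T}_P(\lambda)$ are strictly equivalent pencils, so $\delta$ is an eigenvalue of one precisely when it is an eigenvalue of the other, with the same multiplicities. Then I would argue: a vector $z$ satisfies $\mathcal{R}_P(\delta) z = 0$ if and only if $S R \mathcal{T}_P(\delta) R S z = 0$, and multiplying on the left by $(S R)^{-1} = R S$ (using $R^2 = S^2 = I$), this is equivalent to $\mathcal{T}_P(\delta)(R S z) = 0$, i.e. $R S z$ is a right eigenvector of $\mathcal{T}_P(\lambda)$ associated with $\delta$ (or the zero vector, but $z \neq 0$ forces $RSz \neq 0$ since $RS$ is invertible).

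Next I would invoke Theorem \ref{eig-T}: $RSz$ is a right eigenvector of $\mathcal{T}_P(\lambda)$ for $\delta$ if and only if $RSz = \Delta(\delta) x$ for some right eigenvector $x$ of $P(\lambda)$ associated with $\delta$. Applying $SR = (RS)^{-1}$ to both sides yields $z = SR\, \Delta(\delta) x$, which is exactly the claimed characterization. Conversely, if $z = SR\,\Delta(\delta) x$ for such an $x$, then $RSz = \Delta(\delta) x$ is a right eigenvector of $\mathcal{T}_P(\lambda)$, hence $z$ is a right eigenvector of $\mathcal{R}_P(\lambda)$ by the equivalence above; one should also note $z \neq 0$ because $SR$ is invertible and $\Delta(\delta) x \neq 0$. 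For the left eigenvectors the argument is the mirror image: $z^* \mathcal{R}_P(\delta) = 0$ iff $(z^* S R)\, \mathcal{T}_P(\delta)\, (R S) = 0$ iff $z^* S R$ is a left eigenvector of $\mathcal{T}_P(\lambda)$, which by Theorem \ref{eig-T} happens iff $z^* S R = \overline{\Delta(\delta)}^{\,*}\text{-type}$ expression; more precisely, unwinding the conjugation conventions used in Theorem \ref{eig-T} (where left eigenvectors $z$ of $\mathcal{T}_P$ are written $z = \overline{\Delta(\delta)}\, y$), one gets $z = SR\,\overline{\Delta(\delta)}\, y$, using that $S$ and $R$ are real so $\overline{SR} = SR$.

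One clean way to organize this is to record once, as a preliminary observation, that multiplication by the fixed invertible real matrix $SR$ furnishes a bijection between the (right, resp. left) eigenspaces of $\mathcal{T}_P(\lambda)$ and $\mathcal{R}_P(\lambda)$ at $\delta$; then the result is immediate from Theorem \ref{eig-T}. Alternatively one can just cite the identity (\ref{R-ans}), namely $\mathcal{R}_P(\lambda)(SR\,\Delta(\lambda)) = (SR e_k)\otimes P(\lambda)$, as the $\mathcal{R}_P$-analogue of Lemma \ref{GFP-ans}, and then repeat verbatim the dimension-counting argument in the proof of Theorem \ref{eig-T} to upgrade ``these vectors are eigenvectors'' to ``these are all the eigenvectors.'' I expect no real obstacle here: the only point requiring a modicum of care is bookkeeping the complex conjugation in the left-eigenvector case and confirming $\overline{SR} = SR$ (true since $R$ is a permutation and $S$ has entries $\pm I_n$), so that $SR\,\overline{\Delta(\delta)}\, y = \overline{SR\,\Delta(\delta)}\, y$. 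Everything else is a one-line consequence of $R^2 = S^2 = I$ and strict equivalence.
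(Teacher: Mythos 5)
Your proof is correct and matches the approach the paper implicitly takes: the paper records identity (\ref{R-ans}) and the facts $R^2 = S^2 = I$, then asserts Theorem \ref{Reig} ``based on this fact,'' which is precisely the strict-equivalence reduction to Theorem \ref{eig-T} that you spell out. (One cosmetic note: the left-eigenvector bookkeeping actually uses that $R$ and $S$ are real \emph{and symmetric}, so that $(SR)^{*} = R^{*}S^{*} = RS$, not merely that $\overline{SR} = SR$; this holds since $R$ is the anti-identity permutation and $S$ is block diagonal with $\pm I_n$ blocks, as you observe, so there is no actual gap.)
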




We close  this section with two results that  will help us studying the  numerical performance of the linearizations $\mathcal{T}_P(\lambda)$ and $\mathcal{R}_P(\lambda)$ in the next section. 
The next lemma, which can be easily verified, allows us to   focus on eigenvalues of modulus not greater than $1.$

\begin{lemma}
\label{LTrevP}Let $P(\lambda)$ be a matrix polynomial of odd degree $k$ as in
(\ref{pol}) with $A_0 \neq 0$. Then,
\[
\mathcal{T}_{P}(\lambda)=\lambda DR\mathcal{T}_{revP}\left(  \frac{1}{\lambda
}\right)  RD,
\]
where $R$ is as in (\ref{flipR})\ and
\begin{equation}
D=diag(I_{n},-I_{n},I_{n},-I_{n},\ldots,I_{n}).\label{DD}%
\end{equation}
Moreover, $z\in\mathbb{C}^{nk}$ is a right (resp. left) eigenvector of
$\mathcal{T}_{P}(\lambda)$ associated with a nonzero eigenvalue $\delta$ if
and only if $RDz$ is a right (resp. left) eigenvector of $\mathcal{T}_{revP}%
(\lambda)$ associated with the eigenvalue $\frac{1}{\delta}.$
\end{lemma}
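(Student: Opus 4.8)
The statement to prove is Lemma~\ref{LTrevP}, which asserts the pencil identity $\mathcal{T}_{P}(\lambda)=\lambda\,DR\,\mathcal{T}_{revP}(1/\lambda)\,RD$ together with the corresponding correspondence of right/left eigenvectors. The natural approach is to verify the matrix identity directly and then deduce the eigenvector statement as a formal consequence, exactly in the spirit of Lemma~\ref{lcondrev} and the reversal lemmas already used in the paper. First I would unwind what $\mathcal{T}_{revP}(\mu)$ is: since $revP(\lambda)=\sum_{i=0}^k A_{k-i}\lambda^i$, its coefficients are $B_i=A_{k-i}$, so $\mathcal{T}_{revP}(\mu)$ has the same block-tridiagonal shape as \eqref{GFPT} but with the diagonal blocks $\mu B_k+B_{k-1},\ \mu B_{k-2}+B_{k-3},\dots,\mu B_1+B_0$, i.e. $\mu A_0+A_1,\ \mu A_2+A_3,\dots,\mu A_{k-1}+A_k$. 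Substituting $\mu=1/\lambda$ and multiplying by $\lambda$ turns each such block into $A_0+\lambda A_1,\ A_2+\lambda A_3,\dots$, which are (up to order) exactly the diagonal blocks of $\mathcal{T}_P(\lambda)$ read from the bottom up; the off-diagonal blocks $-I_n$ and $\mu I_n=(1/\lambda)I_n$ become $-\lambda I_n$ and $I_n$ after the factor $\lambda$.

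The matrix $R$ of \eqref{flipR} reverses block-row and block-column order, so conjugation by $R$ flips a $k\times k$ block-matrix about its anti-diagonal; this is what matches the "bottom-up versus top-down" discrepancy between $\mathcal{T}_P(\lambda)$ and $\lambda\mathcal{T}_{revP}(1/\lambda)$. The diagonal sign matrix $D=\mathrm{diag}(I_n,-I_n,I_n,\dots,I_n)$ of \eqref{DD} is there to fix the signs: after the anti-diagonal flip, some of the $-I_n$ blocks sit in the wrong place relative to the $+I_n$ / $+\lambda I_n$ pattern of \eqref{GFPT}, and conjugation by $D$ multiplies the $(i,j)$ block by $(\pm1)(\pm1)$, which corrects precisely the alternating blocks (note $k$ is odd, so $DR$ has a consistent parity). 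The core of the proof is therefore a careful but routine block-entry bookkeeping: write $\mathcal{T}_P(\lambda)=\lambda\mathcal{T}_1-\mathcal{T}_0$, compute $\lambda\,DR\,\mathcal{T}_{revP}(1/\lambda)\,RD$ block by block — diagonal blocks, super-diagonal blocks (alternating $-I_n$ and $\lambda I_n$), and sub-diagonal blocks — and check equality of all $3k-2$ nonzero blocks against \eqref{GFPT}. One clean way to organize this is to observe $\mathcal{T}_{revP}(\mu)$ is itself obtained from $\mathcal{T}_P(\lambda)$ by the substitution $A_i\mapsto A_{k-i}$ and to track how this interacts with the flip $R\cdot R$ and the signs $D\cdot D$; indexing the blocks as $(2r-1,2r-1)$ for the "polynomial" diagonal blocks and $(2r,2r)=0$ for the zero blocks makes the pattern transparent.

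The eigenvector correspondence is then immediate: if $\mathcal{T}_P(\delta)z=0$ with $\delta\neq0$, then from the identity $\delta\,DR\,\mathcal{T}_{revP}(1/\delta)\,RD\,z=0$, and since $\delta\neq0$ and $D,R$ are invertible (indeed $R^2=I$, $D^2=I$), we get $\mathcal{T}_{revP}(1/\delta)(RDz)=0$, i.e. $RDz$ is a right eigenvector of $\mathcal{T}_{revP}(\lambda)$ for $1/\delta$; the converse follows by symmetry of the argument, and the left-eigenvector statement follows by taking conjugate transposes in the pencil identity (noting $D,R$ are real, so $D^*=D$, $R^*=R$).

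**Main obstacle.** There is no conceptual difficulty here; the only real risk is getting the signs and the parity of the flip wrong. The delicate point is that $R$ reverses the order of $k$ blocks while $D$ has period $2$ in its sign pattern, and one must check that for odd $k$ the composition $DR$ (equivalently $RD$) produces exactly the sign flips needed to convert the "flipped and reversed" copy of $\mathcal{T}_{revP}$ into $\mathcal{T}_P$ — in particular that the zero blocks stay zero (trivially), that the two $-I_n$ blocks adjacent to each polynomial diagonal block keep the sign $-1$, and that the $\lambda I_n$ blocks keep sign $+1$. I would handle this by explicitly writing out the $k=3$ and $k=5$ cases first to pin down the pattern, then state the general block-index computation; this is the step I expect to consume the bulk of the (still short) proof.
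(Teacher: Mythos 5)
The paper itself gives no proof of this lemma, noting only that it ``can be easily verified,'' so your direct block-by-block verification is exactly the intended route. Your proposal is correct: the substitution $A_i\mapsto A_{k-i}$ combined with $\mu=1/\lambda$ and the prefactor $\lambda$ produces the diagonal blocks of $\mathcal{T}_P(\lambda)$ in reversed order, the $R$-conjugation undoes the reversal, and the $D$-conjugation restores the sign pattern (the block $(i,j)$ picks up $(-1)^{i+j}$, which flips exactly the off-diagonal blocks as needed); the eigenvector claim then follows by substituting $\lambda=\delta$ in the identity and using $R^2=D^2=I$. One small remark: for the left-eigenvector part it is cleaner to just left-multiply the identity at $\lambda=\delta$ by $y^{\ast}$ and use that $DR$ is real symmetric (indeed $RD=DR$ when $k$ is odd, since $D$'s sign pattern is palindromic of odd length), rather than conjugate-transposing the whole pencil identity, but both lead to the same conclusion.
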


The following result gives an upper bound on the spectral norm of the matrix coefficients of $\mathcal{T}_P(\lambda)$, improving the one  obtained  by using  Proposition \ref{prop}.

\begin{proposition}\label{boundTp}
Let  $P(\lambda)=\sum_{i=0}^k A_i \lambda^i $ be an $n\times n$ matrix polynomial of odd degree $k$ and $\mathcal{T}_P(\lambda) := \lambda \mathcal{T}_1 - \mathcal{T}_0$ be as in (\ref{GFPT}).  Then,
$$\|\mathcal{T}_1\|_2 , \|\mathcal{T}_0\|_2 \leq 2\; \max_{i=0:k} \{1, \|A_i \|_2\}.$$

\end{proposition}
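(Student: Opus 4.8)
The plan is to bound the spectral norm of each coefficient matrix by exploiting the block-tridiagonal structure of $\mathcal{T}_P(\lambda)$, together with the fact that $\mathcal{T}_1$ and $\mathcal{T}_0$ have at most two nonzero blocks in each block-row and each block-column. First I would write down explicitly the two coefficient matrices: $\mathcal{T}_1$ is block-tridiagonal with block-diagonal entries $A_k, 0, A_{k-2}, 0, \ldots, A_3, 0, A_1$ and with off-diagonal blocks equal to $0$ on the super/subdiagonal except for the entries $I_n$ occurring in the positions coming from the "$\lambda I_n$" terms; $\mathcal{T}_0$ similarly has block-diagonal entries $-A_{k-1}, 0, -A_{k-3}, 0, \ldots, -A_2, 0, -A_0$ and off-diagonal blocks equal to $I_n$ in the positions of the "$-I_n$" terms. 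The key structural observation is that each block-row and each block-column of $\mathcal{T}_1$ (and of $\mathcal{T}_0$) contains at most two nonzero blocks, and each such nonzero block has spectral norm at most $\max_{i=0:k}\{1,\|A_i\|_2\}=:M$.

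The core of the argument is then a standard estimate: if $B=(B_{ij})$ is a block matrix in which every block-row has at most $p$ nonzero blocks, every block-column has at most $q$ nonzero blocks, and every block satisfies $\|B_{ij}\|_2\le M$, then $\|B\|_2\le \sqrt{pq}\,M$. I would prove this by splitting $B$ as a sum of at most $\max(p,q)$ block matrices, each of which has at most one nonzero block per block-row and per block-column (a "generalized block-permutation" pattern), so that each summand has spectral norm at most $M$; a more careful accounting using the $\sqrt{pq}$ bound can be obtained via the Cauchy–Schwarz / Schur-test type argument $\|Bv\|_2^2 \le \sum_i \big(\sum_j \|B_{ij}\|_2\|v_j\|_2\big)^2 \le \sum_i p \sum_j \|B_{ij}\|_2^2\|v_j\|_2^2 \le pM^2\sum_j (\#\{i:B_{ij}\neq 0\})\|v_j\|_2^2 \le pqM^2\|v\|_2^2$. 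Here each block-row of $\mathcal{T}_1$ and $\mathcal{T}_0$ has at most $p=2$ nonzero blocks and each block-column has at most $q=2$ nonzero blocks, so $\sqrt{pq}=2$, giving exactly $\|\mathcal{T}_1\|_2,\|\mathcal{T}_0\|_2\le 2M$.

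The step that requires the most care is the bookkeeping: I must verify carefully, looking at the explicit form (\ref{GFPT}), that no block-row or block-column of $\mathcal{T}_1$ or of $\mathcal{T}_0$ ever has three nonzero blocks. The first and last block-rows have the diagonal block ($\lambda A_k + A_{k-1}$, resp. $\lambda A_1 + A_0$) plus a single off-diagonal $-I_n$ (resp. for $\mathcal{T}_1$ this contributes $A_k$ and nothing off-diagonal, for $\mathcal{T}_0$ it contributes $-A_{k-1}$ and the $-I_n$); the "even" block-rows of the form $[-I_n\ 0\ \lambda I_n]$ contribute at most two nonzero blocks to $\mathcal{T}_1$ (just one: $\lambda I_n$) and at most one to $\mathcal{T}_0$ ($-I_n$, and wait—there are two $-I_n$'s), so I must double-check the interior even rows which read $[\ \lambda I_n\ \ \lambda A_{2j+1}+A_{2j}\ \ -I_n\ ]$ and $[\ -I_n\ \ 0\ \ \lambda I_n\ ]$: the former gives $\mathcal{T}_1$ the blocks $I_n$ and $A_{2j+1}$ (two blocks) and gives $\mathcal{T}_0$ the blocks $-A_{2j}$ and $I_n$ (two blocks); the latter gives $\mathcal{T}_1$ just $I_n$ and gives $\mathcal{T}_0$ two blocks $-I_n,-I_n$. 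An identical count applies to block-columns by block-symmetry of $\mathcal{T}_P(\lambda)$. Having confirmed $p=q=2$ in all cases, and that every nonzero block is either $\pm I_n$ (norm $1$) or $\pm A_i$ (norm $\|A_i\|_2\le M$), the bound $2M$ follows immediately from the block-matrix estimate above. I would present this as: apply the estimate to $\mathcal{T}_1$ and $\mathcal{T}_0$ separately, using the row/column counts just described, to conclude $\|\mathcal{T}_1\|_2,\|\mathcal{T}_0\|_2\le 2\max_{i=0:k}\{1,\|A_i\|_2\}$.
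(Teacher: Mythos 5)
Your proof is correct and takes a genuinely different route from the paper's. The paper's proof is a direct block-by-block estimate: it writes out $\|\mathcal{T}_1 z\|_2^2$ explicitly as $\sum_i \|z_{2i+1}\|_2^2 + \sum_i \|z_{2i}+A_{k-2i}z_{2i+1}\|_2^2$, then bounds each cross term by expanding the square and using $2ab \le 2\max\{a^2,b^2\}$. You instead isolate a general structural lemma (every block-row has $\le p$ nonzero blocks, every block-column $\le q$, every block has norm $\le M$, hence $\|B\|_2 \le \sqrt{pq}\,M$), prove it by a Schur-test/Cauchy--Schwarz argument, and apply it with $p=q=2$. Your lemma is cleaner and more reusable (it would apply verbatim to any pencil with this degree of block sparsity, and sharpens the $\sqrt{k\cdot k}$ factor of Proposition \ref{prop} in the paper); the paper's calculation is more concrete and avoids stating an auxiliary estimate. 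Both give exactly $2\max_{i}\{1,\|A_i\|_2\}$.

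One small slip in your bookkeeping paragraph: you claim the interior even rows $[-I_n\ 0\ \lambda I_n]$ contribute \emph{two} blocks $-I_n,-I_n$ to $\mathcal{T}_0$. In fact they contribute only one: the $\lambda I_n$ belongs entirely to $\mathcal{T}_1$, and the single $-I_n$ belongs to $\mathcal{T}_0$. This does not affect your conclusion (the max over all rows and columns is still $2$, achieved in the odd rows/columns of both coefficients and in the first row/column of $\mathcal{T}_0$), but the miscount should be cleaned up before presenting the argument.

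Also worth noting: you offer two proofs of the key lemma (edge-coloring decomposition into $\max(p,q)$ block-permutation-like pieces, giving $\max(p,q)\,M$; and the Schur test giving $\sqrt{pq}\,M$). Here $p=q=2$ so both give the same constant, but the Schur-test bound is the sharper one in general and is the one you should keep.
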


\begin{proof}
Let $z=[z_1, \ldots, z_k]^{\mathcal{B}}$ be a nonzero $k \times 1$ block-vector partitioned into $n\times 1$ blocks. Then, defining $z_0:=0$, we have
\begin{align*}
\|\mathcal{T}_1 z \|_2^2 & = \sum_{i=1}^{\frac{k-1}{2}}  \|z_{2i+1}\|_2^2+ \sum_{i=0}^{\frac{k-1}{2}} \| z_{2i} +A_{k-2i} z_{2i+1} \|_2^2\\
& \leq \sum_{i=1}^{\frac{k-1}{2}}  \|z_{2i+1}\|_2^2+ \sum_{i=0}^{\frac{k-1}{2}}\left ( \|z_{2i}\|_2^2 + \|A_{k-2i} z_{2i+1}\|_2^2 + 2 \|z_{2i}\|_2\|A_{k-2i}z_{2i+1}\|_2 \right )\\
& \leq \max_{i=0:k}\{1, \|A_i\|_2^2\} \left( \sum_{i=1}^{k}  \|z_i\|_2^2+ \sum_{i=0}^{\frac{k-1}{2}} \left( \|z_{2i+1}\|_2^2 + 2 \max\{ \|z_{2i}\|_2^2, \|z_{2i+1}\|_2^2\}\right) \right)\\
& \leq \max_{i=0:k}\{1, \|A_i\|_2^2\} (2\|z\|_2^2 +  2 \sum_{i=0}^{\frac{k-1}{2}} \max\{\|z_{2i}\|_2^2, \|z_{2i+1}\|_2^2\}) \leq 4 \max_{i=0:k}\{1, \|A_i\|_2^2\} \|z\|_2^2.
\end{align*}
The proof for $\mathcal{T}_0$ is analogous.
\end{proof}

\section{Conditioning and backward error of $\mathcal{T}_P(\lambda)$ and $\mathcal{R}_P(\lambda)$}\label{main}

Let $P(\lambda)$ be a matrix polynomial of odd degree $k$ with $A_0\neq 0$ and let $\mathcal{T}_P(\lambda)$ and $\mathcal{R}_P(\lambda)$ be the linearizations of $P(\lambda)$ defined in (\ref{GFPT}) and (\ref{GFP-R}), respectively. 
In this section we present the main results in this paper, Theorems \ref{conditioningR} and \ref{backwardR}, concerned with the comparison of the conditioning of eigenvalues and the backward error of approximate eigenpairs of  $\mathcal{T}_P(\lambda)$  with, respectively, the conditioning of eigenvalues and the backward error of approximate eigenpairs of $P(\lambda)$.  As will follow from Remarks \ref{cond-Rp} and \ref{back-Rp},   Theorems \ref{conditioningR} and \ref{backwardR}  apply to $\mathcal{R}_P(\lambda)$ as well.


We next state Theorems \ref{conditioningR} and \ref{backwardR}, which  will be proven in Sections \ref{conditioning;sec} and \ref{sec;backward}, respectively. We use the notation for $P(\lambda)$ in (\ref{pol}) and we denote

\begin{equation}\label{def-rho}
\rho_{1}:=\frac{\max_{i=0:k}\{1,\Vert A_{i}\Vert_{2}^3\}}%
{\min\{\Vert A_{k}\Vert_{2},\Vert A_{0}\Vert_{2}\}}, \quad \rho_{2}:=\frac
{\min\{\max\{1,\Vert A_{k}\Vert_{2}\},\max\{1,\Vert A_{0}\Vert_{2}%
\}\}}{\max_{i=0:k}\{\Vert A_{i}\Vert_{2}\}}
\end{equation}
and 
\begin{equation}
\rho^{\prime}:=\frac{\max_{i=0:k}\{1,\Vert A_{i}\Vert_{2}^2\}
}{\min\{\Vert A_{k}\Vert_{2},\Vert A_{0}\Vert_{2}\}}.\label{def-rho2}%
\end{equation}



\begin{theorem}
\label{conditioningR}Let $P(\lambda)$ be a  regular matrix polynomial of odd degree $k$
as in (\ref{pol}) with $A_0\neq 0$. Assume that  $\delta$ is a simple, finite, nonzero eigenvalue of
$P(\lambda)$.   Let $\rho_1$ and $\rho_2$ be as in (\ref{def-rho}). Then, 

\begin{equation}\label{Tcondup}
 \rho_2 \leq \frac{\kappa
_{\mathcal{T}_P}(\delta)}{\kappa_{P}(\delta)}  \leq  2k^3 \rho_1.
\end{equation}
Moreover, if $|\delta| \neq 1$ and  $\min\{ |\delta|, \frac{1}{|\delta|} \}\leq  \frac{1}{k-1}$, then 
$$\frac{\kappa
_{\mathcal{T}_P}(\delta)}{\kappa_{P}(\delta)} \leq 4(k+1) \rho_1.$$

\end{theorem}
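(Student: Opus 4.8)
Here I sketch how I would prove the inequalities in Theorem~\ref{conditioningR}.

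\medskip

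\emph{The plan} is to reduce all three claimed inequalities to estimates of a single \emph{exact} formula for the ratio. Write $\mathcal{T}_P(\lambda)=\lambda\mathcal{T}_1-\mathcal{T}_0$ and apply Theorem~\ref{condform} to the pencil $\mathcal{T}_P(\lambda)$, with its natural weights $\|\mathcal{T}_0\|_2,\|\mathcal{T}_1\|_2$. By Theorem~\ref{eig-T}, a right eigenvector of $\mathcal{T}_P(\lambda)$ for $\delta$ is $z=\Delta(\delta)x$, and, using the second identity of Lemma~\ref{GFP-ans}, a left eigenvector is the $w$ with $w^{\ast}=y^{\ast}\Delta^{\mathcal{B}}(\delta)$ (this is a left eigenvector since $y^{\ast}\Delta^{\mathcal{B}}(\delta)\mathcal{T}_P(\delta)=y^{\ast}(e_k^T\otimes P(\delta))=0$), where $x,y$ are right/left eigenvectors of $P(\lambda)$ for $\delta$. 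Differentiating $\mathcal{T}_P(\lambda)\Delta(\lambda)=e_k\otimes P(\lambda)$ in $\lambda$, evaluating at $\delta$, using $\mathcal{T}_P(\delta)z=0$ and $w^{\ast}\mathcal{T}_P(\delta)=0$ to kill the term containing $\Delta'$, and using that the last block of $\Delta^{\mathcal{B}}(\delta)$ is $I_n$, I obtain the key identity $w^{\ast}\mathcal{T}_1z=y^{\ast}P'(\delta)x$ (the pencil analogue of formula (2.11) in \cite{backward}). Substituting into (\ref{num-cond}), the factors $|\delta|$ and $|y^{\ast}P'(\delta)x|=|w^{\ast}\mathcal{T}_1z|$ cancel, leaving
\[
\frac{\kappa_{\mathcal{T}_P}(\delta)}{\kappa_P(\delta)}=\frac{\|\mathcal{T}_0\|_2+|\delta|\,\|\mathcal{T}_1\|_2}{\sum_{i=0}^{k}|\delta|^i\|A_i\|_2}\cdot\frac{\|z\|_2}{\|x\|_2}\cdot\frac{\|w\|_2}{\|y\|_2}.
\]

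\emph{Lower bound.} Since $\Delta(\lambda)$ (and $\Delta^{\mathcal{B}}(\lambda)$) contains the blocks $\lambda^rI_n$ for $r=0,\dots,\frac{k-1}{2}$, we get $\|z\|_2^2\ge\bigl(\sum_{r=0}^{(k-1)/2}|\delta|^{2r}\bigr)\|x\|_2^2$ and $\|w\|_2^2\ge\bigl(\sum_{r=0}^{(k-1)/2}|\delta|^{2r}\bigr)\|y\|_2^2$. Since $\mathcal{T}_0$ contains the blocks $-A_0$ and $I_n$ and $\mathcal{T}_1$ the blocks $A_k$ and $I_n$, Proposition~\ref{prop} gives $\|\mathcal{T}_0\|_2\ge\max\{1,\|A_0\|_2\}$ and $\|\mathcal{T}_1\|_2\ge\max\{1,\|A_k\|_2\}$, whence $\|\mathcal{T}_0\|_2+|\delta|\,\|\mathcal{T}_1\|_2\ge\min\{\max\{1,\|A_0\|_2\},\max\{1,\|A_k\|_2\}\}(1+|\delta|)$. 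Finally $\sum_{i=0}^{k}|\delta|^i\|A_i\|_2\le(\max_j\|A_j\|_2)\sum_{i=0}^{k}|\delta|^i$, and the elementary identity $(1+|\delta|)\sum_{r=0}^{(k-1)/2}|\delta|^{2r}=\sum_{i=0}^{k}|\delta|^i$ (splitting the geometric sum into even and odd powers) makes all the $|\delta|$-dependence cancel, leaving exactly $\rho_2$. This holds for every $\delta$.

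\emph{Upper bound.} Proposition~\ref{boundTp} gives $\|\mathcal{T}_0\|_2+|\delta|\,\|\mathcal{T}_1\|_2\le2(1+|\delta|)\max_i\{1,\|A_i\|_2\}$; the computation in the proof of Lemma~\ref{Deltax} in fact bounds $\sum_i\|\Delta_i(\delta)\|_2^2\le d_1(\delta)\max_i\{1,\|A_i\|_2\}^2$, which controls both $\|z\|_2\le\sqrt{d_1(\delta)}\max_i\{1,\|A_i\|_2\}\|x\|_2$ and $\|w\|_2\le\sqrt{d_1(\delta)}\max_i\{1,\|A_i\|_2\}\|y\|_2$; and $\sum_{i=0}^{k}|\delta|^i\|A_i\|_2\ge\min\{\|A_0\|_2,\|A_k\|_2\}\max\{1,|\delta|\}\ge\tfrac{1+|\delta|}{2}\min\{\|A_0\|_2,\|A_k\|_2\}$. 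Combining these with the exact formula yields $\kappa_{\mathcal{T}_P}(\delta)/\kappa_P(\delta)\le 4\,d_1(\delta)\,\rho_1$. To control $d_1(\delta)$ uniformly I would reduce to $|\delta|\le1$: Lemma~\ref{lcondrev} applied to $P(\lambda)$ and to the pencil $\mathcal{T}_P(\lambda)$ — whose reversal, by Lemma~\ref{LTrevP}, equals $(DR)\mathcal{T}_{revP}(\lambda)(RD)$ with $DR,RD$ orthogonal, hence has the same condition number — gives $\kappa_{\mathcal{T}_P}(\delta)/\kappa_P(\delta)=\kappa_{\mathcal{T}_{revP}}(1/\delta)/\kappa_{revP}(1/\delta)$, and $revP(\lambda)$ has odd degree $k$, nonzero constant term $A_k$, the same $\rho_1$, and the relevant eigenvalue $1/\delta$; so I may assume $|\delta|\le1$. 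Then Lemma~\ref{d1bound} gives $d_1(\delta)\le\tfrac{k+1}{2}+\tfrac{(k-1)^3}{2}|\delta|^2\le\tfrac{(k+1)+(k-1)^3}{2}\le\tfrac{k^3}{2}$, using $(k-1)^3+(k+1)=k(k^2-3k+4)\le k^3$ for $k\ge3$, and thus $\kappa_{\mathcal{T}_P}(\delta)/\kappa_P(\delta)\le2k^3\rho_1$. If moreover $|\delta|\ne1$ and $\min\{|\delta|,1/|\delta|\}\le\frac1{k-1}$, then after the reduction $|\delta|\le\frac1{k-1}$, so $d_1(\delta)\le\tfrac{k+1}{2}+\tfrac{(k-1)^3}{2}\cdot\tfrac1{(k-1)^2}=k$, giving $\kappa_{\mathcal{T}_P}(\delta)/\kappa_P(\delta)\le4k\rho_1\le4(k+1)\rho_1$.

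\emph{The main obstacle} is bookkeeping rather than a conceptual difficulty: getting the left eigenvector and the bilinear identity $w^{\ast}\mathcal{T}_1z=y^{\ast}P'(\delta)x$ exactly right (Lemma~\ref{GFP-ans} is tailored for this), and organizing the reversal reduction so that $d_1(\delta)$ is always evaluated at a point of modulus at most $1$; once the exact formula for the ratio is established, the remaining steps are a routine assembly of Propositions~\ref{prop} and~\ref{boundTp} and Lemmas~\ref{Deltax} and~\ref{d1bound}.
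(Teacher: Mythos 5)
Your proposal is correct and follows essentially the same route as the paper: derive the exact formula for the ratio from Theorem~\ref{condform}, Lemma~\ref{GFP-ans}, and the differentiated identity (this is the content of the paper's Lemma~\ref{kappatp}); bound the numerator/denominator via Propositions~\ref{prop} and~\ref{boundTp}; control the eigenvector-stretching factor via Lemma~\ref{Deltax}; and dispose of $|\delta|>1$ by a reversal argument that is exactly the paper's Lemmas~\ref{lcondrev} and~\ref{condPrevP}. The minor differences — writing the left eigenvector as $w^*=y^*\Delta^{\mathcal{B}}(\delta)$ rather than the paper's $\overline{\Delta(\delta)}y$, and performing the reversal reduction after rather than before the norm estimates — do not change the structure or the constants obtained.
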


\begin{remark}\label{cond-Rp}
Taking into account the definition of $\mathcal{R}_{P}(\lambda)$ and Theorem
\ref{Reig}, it follows easily from Theorem \ref{condform} applied to
$\mathcal{T}_{P}(\lambda)$ and $\mathcal{R}_{P}(\lambda)$ that $\kappa_{\mathcal{T}_{P}}%
(\delta)=\kappa_{\mathcal{R}_{P}}(\delta).$ Recall that  
the spectral norm is unitarily invariant. Thus, Theorem \ref{Tcondup} holds for $\mathcal{R}_P(\lambda)$ as well. 
\end{remark}

From Theorem \ref{conditioningR} we can conclude that, if  the norms of the matrix coefficients of $P(\lambda)$ have similar magnitudes and  $\max_{i=0:k}  \{\|A_i\|_2\} =1$ (which can be obtained by scaling $P(\lambda)$ as explained in Section \ref{cone-sec}), the condition number of any simple, nonzero, finite eigenvalue of $P(\lambda)$ is close to the condition number of the same eigenvalue of $\mathcal{T}_P(\lambda)$ { \it with no restriction on the modulus of $\delta$}, in contrast with what happens to $D_1(\lambda, P)$ and $D_k(\lambda, P)$. 

Since, for not extremely large values of $nk$,  the algorithm QZ, combined with adequate methods for computing eigenvectors \cite{moler},  is used to compute the eigenvalues and eigenvectors of a linearization of $P(\lambda)$ and this algorithm produces small backward errors of order unit-roundoff, if we prove that  $\eta_P(x, \delta)$ is not much larger than $\eta_{\mathcal{T}_P}(z, \delta)$ (where $(z, \delta)$ denotes an approximate eigenpair of $\mathcal{T}_P$ and $(x, \delta)$ denotes an approximate eigenpair of $P$ obtained from $(z, \delta)$ in an appropriate way),  we ensure small backward errors for the approximate eigenpairs of $P(\lambda)$ as well. This motivates the next theorem, in which an upper bound for the ratio $ \frac{\eta_P(x, \delta)}{\eta_{\mathcal{T}_P}(z, \delta)} $  is established.

\begin{theorem}\label{backwardR}
Let $P(\lambda)$ be a  matrix polynomial  of odd degree $k$ as in (\ref{pol}) with $A_0 \neq 0$.  Let  $(z, \delta)$ be an approximate right eigenpair of $\mathcal{T}_P(\lambda)$,
 $ x=(e_1^T \otimes I_n)z$ if $|\delta| > 1$ and  $x=(e_k^T \otimes I_n)z$ if $|\delta| \leq 1$.   Then, $(x, \delta)$ is an approximate right eigenpair of $P(\lambda)$  and 
\begin{equation}\label{back-bound}
 \frac{\eta_P(x, \delta)}{\eta_{\mathcal{T}_P}(z, \delta)} \leq 4 k^{\frac{3}{2}}\frac{\|z\|_2}{\|x\|_2} \rho'.
\end{equation}
Moreover, if $|\delta| \neq 1$ and $\min \{ |\delta|, \frac{1}{|\delta|} \}\leq \frac{1}{k-1}$, then 
\begin{equation}\label{second-back-T}
 \frac{\eta_P(x, \delta)}{\eta_{T_P}(z, \delta)} \leq  4 \sqrt{k+1} \frac{\|z\|_2}{\|x\|_2}\rho'. 
\end{equation}
\end{theorem}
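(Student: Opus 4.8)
The plan is to express $\eta_P(x,\delta)$ and $\eta_{\mathcal{T}_P}(z,\delta)$ via the explicit formulas in Theorem \ref{back}, reduce to the case $|\delta|\le 1$ by the reversal symmetry established in Lemmas \ref{lbackrev} and \ref{LTrevP}, and then bound the numerator $\|P(\delta)x\|_2$ in terms of the residual $\|\mathcal{T}_P(\delta)z\|_2$ using the identity $\mathcal{T}_P(\lambda)\Delta(\lambda)=e_k\otimes P(\lambda)$ from Lemma \ref{GFP-ans}. First I would treat $|\delta|\le 1$, so that $x=(e_k^T\otimes I_n)z$ is the last block of $z$. Write $\mathcal{T}_P(\delta)z = r$; then $(e_k^T\otimes I_n)r$ is the last block of $r$, and by inspecting the last block-row of $\mathcal{T}_P(\delta)$ (which equals the last block-row of $e_k\otimes P(\delta)$ applied to $\Delta(\delta)$ only when $\delta$ is an exact eigenvalue — here $\delta$ is approximate) one sees that $P(\delta)x$ does not appear directly. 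Instead, the clean route is: the last block-row of $\mathcal{T}_P(\delta)$ is $[0,\ldots,0,\delta I_n,\ \delta A_1+A_0]$, so $P(\delta)x$ is not isolated, and we must use the whole structure. The correct mechanism, following the proof technique of the backward-error results in \cite{backward}, is to use the left-multiplier identity $\Delta^{\mathcal B}(\delta)\mathcal{T}_P(\delta)=e_k^T\otimes P(\delta)$: multiplying $r=\mathcal{T}_P(\delta)z$ on the left by $\Delta^{\mathcal B}(\delta)$ gives nothing useful for the right residual, so instead I would multiply the relation differently. Concretely, I would note that $P(\delta)x = (e_k^T\otimes I_n)\,\bigl(\mathcal{T}_P(\delta)\Delta(\delta)x'\bigr)$ for the exact-eigenvector structure, and handle the approximate case by writing $P(\delta)x$ in terms of $r$ through a block back-substitution up the tridiagonal pencil, which expresses $P(\delta)x$ as a specific linear combination $\sum_i \delta^{c_i} (e_i^T\otimes I_n) r$ with Horner-shift coefficients — exactly the combination encoded by $\Delta^{\mathcal B}(\delta)$.

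The key steps, in order, are: (1) Reduce to $|\delta|\le 1$: if $|\delta|>1$ apply Lemma \ref{LTrevP} to replace $\mathcal{T}_P$ by $\mathcal{T}_{revP}$ and $\delta$ by $1/\delta$, noting that $RDz$ has the same block norms as $z$ up to permutation and sign (so $\|z\|_2$ and $\|x\|_2$ are unchanged, with $x$ now the \emph{last} block), and that $\eta_P(x,\delta)=\eta_{revP}(x,1/\delta)$ by Lemma \ref{lbackrev} while $\rho'$ is symmetric in $A_0,A_k$; this matches the stated recovery rule. (2) With $|\delta|\le 1$ and $x=(e_k^T\otimes I_n)z$, set $r=\mathcal{T}_P(\delta)z$ and solve the block-tridiagonal system upward to obtain $P(\delta)x = \Delta^{\mathcal B}(\delta)\,r$, using (\ref{T-ans}) — more precisely, since $\mathcal{T}_P(\delta)\Delta(\delta)=e_k\otimes P(\delta)$, for any $w$ we have $(e_k^T\otimes I_n)\mathcal{T}_P(\delta)w$ need not give $P(\delta)$ times the last block, but the transpose identity $\Delta^{\mathcal B}(\delta)\mathcal{T}_P(\delta)=e_k^T\otimes P(\delta)$ yields $\Delta^{\mathcal B}(\delta)\,r = \Delta^{\mathcal B}(\delta)\mathcal{T}_P(\delta) z = (e_k^T\otimes P(\delta))z = P(\delta)\,(e_k^T\otimes I_n)z = P(\delta)x$. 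This is the crucial step and it is immediate from Lemma \ref{GFP-ans}. (3) Bound $\|P(\delta)x\|_2 = \|\Delta^{\mathcal B}(\delta)r\|_2 \le \|\Delta(\delta)\|_2\,\|r\|_2 \le \sqrt{d_1(\delta)}\,\max_i\{1,\|A_i\|_2\}\,\|r\|_2$ by Lemma \ref{Deltax} (the spectral norm of a block matrix equals that of its block-transpose). (4) Use Theorem \ref{back} for both backward errors, Proposition \ref{boundTp} to bound $\|\mathcal{T}_1\|_2,\|\mathcal{T}_0\|_2\le 2\max_i\{1,\|A_i\|_2\}$ hence $\sum_{j=0}^1|\delta|^j\|\mathcal{T}_j\|_2 \le 4\max_i\{1,\|A_i\|_2\}$, and bound $\sum_{i=0}^k|\delta|^i\|A_i\|_2 \ge \max\{\|A_k\|_2,\|A_0\|_2\}\cdot$ (constant, since $|\delta|\le 1$ forces the relevant comparison term; more carefully $\sum_{i=0}^k|\delta|^i\|A_i\|_2\ge\|A_0\|_2$ and also $\ge|\delta|^k\|A_k\|_2$). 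Assembling:
\begin{align*}
\frac{\eta_P(x,\delta)}{\eta_{\mathcal{T}_P}(z,\delta)}
&= \frac{\|P(\delta)x\|_2}{\bigl(\sum_{i=0}^k|\delta|^i\|A_i\|_2\bigr)\|x\|_2}\cdot\frac{\bigl(|\delta|\|\mathcal{T}_1\|_2+\|\mathcal{T}_0\|_2\bigr)\|z\|_2}{\|\mathcal{T}_P(\delta)z\|_2}\\
&\le \frac{\sqrt{d_1(\delta)}\,\max_i\{1,\|A_i\|_2\}\cdot 4\max_i\{1,\|A_i\|_2\}}{\min\{\|A_k\|_2,\|A_0\|_2\}}\cdot\frac{\|z\|_2}{\|x\|_2}.
\end{align*}
Then $\sqrt{d_1(\delta)}\le\sqrt{\binom{k}{2}\cdot\text{(stuff)}}$ — for the first bound one uses the crude estimate $d_1(\delta)\le k^3$ (valid since each of the $\le k$ terms is at most $k$ after summing the geometric factors, $|\delta|\le 1$), giving the factor $4k^{3/2}$ and $\rho'=\max_i\{1,\|A_i\|_2^2\}/\min\{\|A_k\|_2,\|A_0\|_2\}$ as in (\ref{back-bound}). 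For (\ref{second-back-T}), under $|\delta|\le\frac1{k-1}$ (the $|\delta|>1$ case again reduces here) apply Lemma \ref{d1bound}: $d_1(\delta)\le\frac{k+1}{2}+\frac{(k-1)^3}{2}|\delta|^2\le\frac{k+1}{2}+\frac{k-1}{2}\le k$, wait — $\le\frac{k+1}{2}+\frac{(k-1)}{2}=k$, so $\sqrt{d_1(\delta)}\le\sqrt{k}\le\sqrt{k+1}$, yielding the factor $4\sqrt{k+1}$.

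The main obstacle is getting step (2) exactly right: one must verify that it really is the \emph{left} multiplier identity $\Delta^{\mathcal B}(\delta)\mathcal{T}_P(\delta)=e_k^T\otimes P(\delta)$ that does the job (not the right one), and that applying it to the residual $r=\mathcal{T}_P(\delta)z$ of an \emph{approximate} pair still gives the clean identity $\Delta^{\mathcal B}(\delta)r = P(\delta)x$ — this works because the identity is a polynomial identity in $\lambda$, independent of whether $\delta$ is an eigenvalue, and because $(e_k^T\otimes P(\delta))z$ genuinely equals $P(\delta)$ applied to the last block of $z$, which is precisely the chosen $x$. A secondary technical point is confirming that the reversal reduction in step (1) sends the recovery rule "$x=$ first block for $|\delta|>1$" to "$x=$ last block for $1/|\delta|\le 1$" consistently with the sign/permutation action of $RD$, so that the norm ratio $\|z\|_2/\|x\|_2$ is genuinely preserved; this is routine but must be stated. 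The rest is bookkeeping with Lemmas \ref{Deltax}, \ref{d1bound}, and Proposition \ref{boundTp}.
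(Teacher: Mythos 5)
Your proposal is correct and follows essentially the same route as the paper's proof: reduce to $|\delta|\le 1$ via Lemmas \ref{lbackrev} and \ref{backPrevP}, invoke the left-multiplier identity $\Delta^{\mathcal B}(\delta)\mathcal{T}_P(\delta)=e_k^T\otimes P(\delta)$ from Lemma \ref{GFP-ans} to write $P(\delta)x=\Delta^{\mathcal B}(\delta)\,\mathcal{T}_P(\delta)z$, and then assemble the bound from Lemma \ref{Deltax}, Proposition \ref{boundTp}, and the estimate $\sum_i|\delta|^i\|A_i\|_2\ge\min\{\|A_0\|_2,\|A_k\|_2\}(|\delta|^k+1)$. (The mid-proposal waffling about whether the left-multiplier identity helps is a red herring — as you self-correct, it is exactly the step the paper uses — and your slightly cruder splitting of the $(|\delta|+1)/(|\delta|^k+1)$ factor still lands on the same constants $4k^{3/2}$ and $4\sqrt{k+1}$.)
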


\begin{remark}\label{back-Rp} Theorem \ref{backwardR} also holds for $\mathcal{R}_P(\lambda)$ taking into consideration the following observations. 
Let $(z,\delta)$ be an approximate
eigenpair of $\mathcal{R}_{P}(\lambda)$. It is easy to see that
\[
\eta_{\mathcal{R}_{P}}(z,\delta)=\eta_{\mathcal{T}_{P}}(RSz,\delta).
\]
Since, for $t\in\{1,k\}$, $(e_{t}^{T}\otimes I_{n})z=\pm(e_{k-t+1}^{T}\otimes I_{n})RSz:=\pm x_t$,  we have
\[
\frac{\eta_{P}(( e_t^T \otimes I_n)z,\delta)}{\eta_{\mathcal{R}_{P}}(z,\delta)}=\frac
{\eta_{\mathcal{P}}(x_t,\delta)}{\eta_{\mathcal{T}_{P}}(RSz,\delta)}.
\]
Thus, it is clear that, if proven  for
$\mathcal{T}_{P}(\lambda)$, Theorem \ref{backwardR} also holds for $\mathcal{R}_P(\lambda)$. 
\end{remark}

In Theorem \ref{backwardR} we considered right eigenpairs of $P(\lambda)$ and  $\mathcal{T}_P(\lambda)$. The case of left eigenpairs can be easily reduced to the previous case. Indeed, $(y^*, \delta)$ is a left eigenpair of $P(\lambda)$ if and only if $(y, \overline{\delta})$ is a right eigenpair of $P^*(\lambda)$. Similarly, $(z^*, \delta)$ is a left eigenpair of $\mathcal{T}_P(\lambda)$ if and only if $(z, \overline{\delta})$ is a right eigenpair of $\mathcal{T}^*_P(\lambda)= \mathcal{T}_{P^*}(\lambda).$ Thus, it can be easily seen that 
$$ \eta_P(y^*, \delta)= \eta_{P^*}(y, \overline{\delta}) \quad \textrm{and} \quad \eta_{\mathcal{T}_P}(z^*, \delta)= \eta_{\mathcal{T}_{P^*}}(z, \overline{\delta}),$$
and the result for approximate left eigenpairs follows from Theorem \ref{backwardR} applied to $P^*(\lambda)$ and $\mathcal{T}_{P^*}(\lambda)$. 

Note that Theorems \ref{conditioningR} and \ref{backwardR} hold with no nonsingularity restrictions on the matrix coefficients  of $P(\lambda)$, in contrast with the analogous results  for the block-symmetric linearizations $D_1(\lambda, P)$ and $D_k(\lambda, P)$ (see \cite{backward, tisseur} and Section \ref{cond-D1Dk}). 

\begin{remark}\label{quotientzx}
From Theorem \ref{eig-T}, $x^{\prime}$ is an eigenvector of $P(\lambda)$
associated with the eigenvalue $\delta\ $if and only if $z=\Delta
(\delta)x^{\prime}$ is an eigenvector of $\mathcal{T}_{P}(\lambda)$
associated with $\delta.$ As in the proof of Lemma \ref{Deltax}, for
$z=\Delta(\delta)x^{\prime}$ (which implies $x^{\prime}=(e_{k}^{T}\otimes
I_{n})z$) , we obtain%
\[
\frac{\Vert z\Vert_{2}^{2}}{\Vert x^{\prime}\Vert_{2}^{2}}\leq d_{1}%
(\delta)\max_{i=0:k}\{\Vert A_{i}\Vert_{2},1\}^{2},
\]
where $d_{1}(\delta)$ is as in (\ref{Delta1}). If $|\delta|\leq1,$ from Lemma
\ref{d1bound},  we have
\[
\frac{\Vert z\Vert_{2}^{2}}{\Vert x^{\prime}\Vert_{2}^{2}}\leq\left(
\frac{k+1}{2}+\frac{(k-1)^{3}}{2}\right)  \max_{i=0:k}\{1,\Vert A_{i}\Vert
_{2}\}^2,
\]
implying%
\[
\frac{\Vert z\Vert_{2}}{\Vert x^{\prime}\Vert_{2}}\leq\frac{1}{2}k^{\frac
{3}{2}}\max_{i=0:k}\{1,\Vert A_{i}\Vert_{2}\}.
\]
If $|\delta|>1,$ taking into account Lemma \ref{LTrevP}, the eigenvectors of
$\mathcal{T}_{revP}(\lambda)$ associated with the eigenvalue $\frac{1}{\delta
}$ are of the form $RDz.$ From Theorem \ref{eig-T}, $(e_{1}^{T}\otimes
I_{n})RDz$ is an eigenvector of $revP(\lambda)$ associated with the eigenvalue
$\frac{1}{\delta}\ $and, from the calculations above, we also have
\[
\frac{\Vert z\Vert_{2}}{\Vert(e_{1}^{T}\otimes I_{n})z\Vert_{2}}=\frac{\Vert
RDz\Vert_{2}}{\Vert(e_{k}^{T}\otimes I_{n})RDz\Vert_{2}}\leq\frac{1}%
{2}k^{\frac{3}{2}}\max_{i=0:k}\{1,\Vert A_{i}\Vert_{2}\}.
\]
Thus, if 
$P(\lambda)$ is scaled so that $\max_{i=0:k}\{\Vert A_{i}\Vert_{2}\}=1,$ we
obtain
\[
\frac{\Vert z\Vert_{2}}{\Vert x\Vert_{2}}\lessapprox\frac{1}{2}k^{\frac{3}{2}%
},
\]
where $x=(e_{k}^{T}\otimes I_{n})z$ if $|\delta|\leq1$ and $x=(e_{1}%
^{T}\otimes I_{n})z$ if $|\delta|>1.$ If we assume that an approximate  eigenvector $z$ has a block structure similar to that of an eigenvector of $\mathcal{T}_p(\lambda)$, then we can expect  that the bound above for the quotient $\frac{\|z\|_2}{\|x\|_2}$ appearing in Theorem \ref{backwardR}  still holds and it is close to 1 for moderate $k$.  

Note that, when the matrix coefficients of $P(\lambda)$ have similar norms and
$P(\lambda)$ is scaled so that $\max_{i=0:k}\{\Vert A_{i}\Vert_{2}\}=1$, then
$\rho^{\prime}\approx1$ and the bound of the quotient of backward errors in
Theorem \ref{backwardR} is expected to depend only on $k$. 
\end{remark}


In the rest of this section we use the notation introduced in Section \ref{GFP}. In particular, $\mathcal{T}_P(\lambda):=\lambda \mathcal{T}_1 -\mathcal{T}_0$ denotes the block symmetric pencil defined in (\ref{GFPT}).

\subsection{Proof of Theorem \ref{conditioningR}}\label{conditioning;sec}
 We start with a lemma in which we give an explicit expression for the condition number of a simple, finite, nonzero eigenvalue $\delta$ of the  linearization $\mathcal{T}_P(\lambda)$  of a matrix polynomial $P(\lambda)$ of odd degree.

\begin{lemma}\label{kappatp}
Let $P(\lambda)$ be a regular matrix polynomial of odd degree $k$ as in (\ref{pol}). Assume that $\delta$ is a simple, finite, nonzero
eigenvalue of $P(\lambda)$ with left and right eigenvectors $x_1$ and $x_2$, respectively.  Let 
$\mathcal{T}_P(\lambda):=\lambda \mathcal{T}_1 - \mathcal{T}_0$ 
Then, 
\begin{equation}\label{kT}
\kappa_{\mathcal{T}_P} (\delta)= \frac{(|\delta| \|\mathcal{T}_1 \|_2+ \|\mathcal{T}_0\|_2) \|\Delta(\delta) x_1\|_2 \|\Delta(\delta) x_2\|_2}{|\delta| |x_1^* P'(\lambda) x_2|},
\end{equation}
where $\Delta(\lambda)$ is defined as in (\ref{Delta}).


\end{lemma}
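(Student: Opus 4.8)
The plan is to apply the general condition number formula of Theorem \ref{condform} directly to the pencil $\mathcal{T}_P(\lambda)$, viewed as a matrix polynomial of degree $1$, and then rewrite each ingredient of that formula in terms of data associated with $P(\lambda)$. Since $\mathcal{T}_P(\lambda) = \lambda \mathcal{T}_1 - \mathcal{T}_0$ is a pencil, Theorem \ref{condform} with the natural weights $\omega_1 = \|\mathcal{T}_1\|_2$, $\omega_0 = \|\mathcal{T}_0\|_2$ (as defined in Section \ref{cone-sec}) gives
\[
\kappa_{\mathcal{T}_P}(\delta) = \frac{(|\delta|\,\|\mathcal{T}_1\|_2 + \|\mathcal{T}_0\|_2)\,\|w\|_2\,\|z\|_2}{|\delta|\,|w^* \mathcal{T}_1 z|},
\]
where $z$ is a right eigenvector and $w$ a left eigenvector of $\mathcal{T}_P(\lambda)$ associated with $\delta$, and where I have used that $\mathcal{T}_P'(\lambda) = \mathcal{T}_1$. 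So the task reduces to three substitutions: (i) express $z$ and $w$ in terms of eigenvectors of $P(\lambda)$; (ii) rewrite the numerator norms; (iii) show $|w^* \mathcal{T}_1 z| = |x_1^* P'(\delta) x_2|$.

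For step (i), I invoke Theorem \ref{eig-T}: a right eigenvector of $\mathcal{T}_P(\lambda)$ for $\delta$ has the form $z = \Delta(\delta) x_2$ for a right eigenvector $x_2$ of $P(\lambda)$, and a left eigenvector has the form $w = \overline{\Delta(\delta)}\, x_1$ for a left eigenvector $x_1$ of $P(\lambda)$. This immediately gives $\|z\|_2 = \|\Delta(\delta) x_2\|_2$ and $\|w\|_2 = \|\overline{\Delta(\delta)} x_1\|_2 = \|\Delta(\delta) x_1\|_2$ (conjugation preserves the Euclidean norm), which accounts for the two norm factors in \eqref{kT}. For step (iii), the key computation is $w^* \mathcal{T}_1 z = x_1^T \overline{\Delta(\delta)}^* \,\mathcal{T}_1\, \Delta(\delta) x_2 = x_1^T (\Delta(\delta))^{\mathcal{B},*}\cdots$ — here I need to be careful about conjugate-transpose versus block-transpose bookkeeping. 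The clean route is to use Lemma \ref{GFP-ans}: since $\mathcal{T}_P(\delta)\Delta(\delta) = e_k \otimes P(\delta)$, differentiating the identity $\mathcal{T}_P(\lambda)\Delta(\lambda) = e_k \otimes P(\lambda)$ in $\lambda$ and evaluating at $\delta$, then multiplying on the left by $w^*$ and using $w^* \mathcal{T}_P(\delta) = 0$, yields $w^* \mathcal{T}_1 \Delta(\delta) x_2 = (e_k^* \otimes I_n)(\text{left factor of } w)\cdot$ something proportional to $P'(\delta) x_2$ — more precisely one recovers $w^*\mathcal{T}_1 z = c\, x_1^* P'(\delta) x_2$ for a unimodular scalar $c$ coming from how the $e_k$-block picks out the relevant component. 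The modulus then matches, giving $|w^*\mathcal{T}_1 z| = |x_1^* P'(\delta) x_2|$.

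The main obstacle I anticipate is step (iii): tracking the scalar/phase relation between $w^* \mathcal{T}_1 z$ and $x_1^* P'(\delta) x_2$ carefully, in particular making sure the block-transpose structure in $\Delta^{\mathcal{B}}$ (as opposed to conjugate transpose) is handled correctly when $P(\lambda)$ has complex coefficients, and verifying that the relevant block of the left-eigenvector identity $\Delta^{\mathcal{B}}(\delta)\mathcal{T}_P(\delta) = e_k^T \otimes P(\delta)$ from Lemma \ref{GFP-ans} is indeed the $e_k$-block so that the $P'(\delta)$ term survives while the $P(\delta)$ terms vanish. A secondary, purely bookkeeping point is confirming that $\mathcal{T}_P$ being regular (it is a strong linearization of the regular $P$) and $\delta$ being simple for $P$ forces $\delta$ to be simple for $\mathcal{T}_P$, so that Theorem \ref{condform} applies and the denominator $|w^*\mathcal{T}_1 z|$ is nonzero; this follows since $|x_1^* P'(\delta) x_2| \neq 0$ by the remark after Theorem \ref{condform}. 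Assembling (i)–(iii) into the single fraction \eqref{kT} is then immediate.
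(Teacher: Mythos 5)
Your proposal follows essentially the same route as the paper: apply Theorem \ref{condform} to the pencil $\mathcal{T}_P(\lambda)$, substitute the eigenvector formulas from Theorem \ref{eig-T} to get the norm factors, and obtain the denominator by differentiating the identity $\mathcal{T}_P(\lambda)\Delta(\lambda)=e_k\otimes P(\lambda)$ from Lemma \ref{GFP-ans}, evaluating at $\delta$, and premultiplying by the left eigenvector $w^*$ so that $w^*\mathcal{T}_P(\delta)\Delta'(\delta)x_2$ vanishes. The one place you hedge unnecessarily is the phase factor $c$: the paper's computation shows $c=1$ exactly, since $w^*=x_1^*\Delta(\delta)^T$ and $\Delta(\delta)^T(e_k\otimes P'(\delta)x_2)=P'(\delta)x_2$ because the last ($k$th) block of $\Delta(\delta)$ is $I_n$, so no unimodular ambiguity arises.
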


\begin{proof}
 Differentiating  the first equality in (\ref{T-ans}), we get
\begin{equation}\label{GFP-eq1}
\mathcal{T}'(\lambda)\Delta(\lambda) + \mathcal{T}(\lambda) \Delta'(\lambda) = e_k \otimes P'(\lambda).
\end{equation}
By Theorem \ref{eig-T},  the vector   $z_1= \overline{\Delta}(\delta)  x_1$ is a left eigenvector of $\mathcal{T}_P(\lambda)$ associated with $\delta$ and $z_2=\Delta(\delta)  x_2$  is a right eigenvector of $\mathcal{T}_P(\lambda)$ associated with $\delta$.  
Evaluating the  expression (\ref{GFP-eq1}) at $\delta$, premultiplying  by  $z_1^*$, and postmultiplying  by $ x_2$, we get
$$z_1^* \mathcal{T}'(\delta)(\Delta(\delta)  x_2) = z_1^* (e_{k} \otimes P'(\delta)x_2),$$
or, equivalently, 
$$z_1^* \mathcal{T}'(\delta) z_2 = x_1^* \Delta^T(\delta)(e_k \otimes P'(\delta)x_2)=x_1^* P'(\delta) x_2.$$
Now (\ref{kT}) follows from Theorem \ref{condform}.
\end{proof}

The following lemma will allow  us to only consider eigenvalues $\delta$ such that $|\delta| \leq 1$ when proving Theorem \ref{conditioningR}. 

\begin{lemma}
\label{condPrevP}Let $P(\lambda)$ be a regular matrix polynomial of odd degree
$k$ as in (\ref{pol}) with $A_0 \neq 0$, and let $\delta$ be  a simple, finite, nonzero
eigenvalue of $P(\lambda)$. Then,
\[
\kappa_{\mathcal{T}_{P}}(\delta)=\kappa_{\mathcal{T}_{rev P}}\left(  \frac
{1}{\delta}\right)  .
\]

\end{lemma}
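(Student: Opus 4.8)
The plan is to deduce this from the formula for $\kappa_{\mathcal{T}_P}(\delta)$ given in Lemma \ref{kappatp} together with the structural relation between $\mathcal{T}_P(\lambda)$ and $\mathcal{T}_{revP}(\lambda)$ recorded in Lemma \ref{LTrevP}. Write $\mathcal{T}_P(\lambda) = \lambda\mathcal{T}_1 - \mathcal{T}_0$ and $\mathcal{T}_{revP}(\lambda) = \lambda\mathcal{S}_1 - \mathcal{S}_0$. From Lemma \ref{LTrevP} we have $\mathcal{T}_P(\lambda) = \lambda DR\,\mathcal{T}_{revP}(1/\lambda)\,RD$; expanding $\mathcal{T}_{revP}(1/\lambda) = \frac{1}{\lambda}\mathcal{S}_1 - \mathcal{S}_0$ gives $\mathcal{T}_P(\lambda) = DR\mathcal{S}_1 RD - \lambda\, DR\mathcal{S}_0 RD$, so that $\mathcal{T}_1 = -DR\mathcal{S}_0RD$ and $\mathcal{T}_0 = -DR\mathcal{S}_1RD$. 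Since $D$ and $R$ are real orthogonal (indeed $D^2=R^2=I$ and both are symmetric) and the spectral norm is unitarily invariant, we get $\|\mathcal{T}_1\|_2 = \|\mathcal{S}_0\|_2$ and $\|\mathcal{T}_0\|_2 = \|\mathcal{S}_1\|_2$.

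Next I would handle the remaining pieces of the formula \eqref{kT}. By Lemma \ref{LTrevP}, if $z$ is a right (resp. left) eigenvector of $\mathcal{T}_P(\lambda)$ for $\delta$, then $RDz$ is a right (resp. left) eigenvector of $\mathcal{T}_{revP}(\lambda)$ for $1/\delta$; and $\|RDz\|_2 = \|z\|_2$. Using the eigenvector recovery of Theorem \ref{eig-T} for both pencils, choosing corresponding right/left eigenvectors $x_1, x_2$ of $P(\lambda)$ for $\delta$ — which, because $\delta$ is simple, are (up to scalars) also the left/right eigenvectors of $revP(\lambda)$ for $1/\delta$ — the quantities $\|\Delta_{\mathcal{T}_P}(\delta)x_1\|_2$ and $\|\Delta_{\mathcal{T}_P}(\delta)x_2\|_2$ transform into the analogous norms for $\mathcal{T}_{revP}$ at $1/\delta$ with the same scalar factors appearing in numerator and denominator. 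For the denominator, I would note $(revP)'(\mu)$ evaluated at $\mu = 1/\delta$ relates to $P'(\delta)$ via the identity $revP(\mu) = \mu^k P(1/\mu)$: differentiating gives $(revP)'(\mu) = k\mu^{k-1}P(1/\mu) - \mu^{k-2}P'(1/\mu)$, and at $\mu = 1/\delta$, sandwiching between the left and right eigenvectors of $P$ (for which $P(\delta)x_i = 0$) kills the first term, leaving $x_1^*(revP)'(1/\delta)x_2 = -\delta^{2-k}\, x_1^*P'(\delta)x_2$.

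Finally I would assemble these pieces into \eqref{kT} written for $\mathcal{T}_{revP}$ at $1/\delta$: the factor $(|1/\delta|\,\|\mathcal{S}_1\|_2 + \|\mathcal{S}_0\|_2) = (|1/\delta|\,\|\mathcal{T}_0\|_2 + \|\mathcal{T}_1\|_2)$, which equals $\frac{1}{|\delta|}(\|\mathcal{T}_0\|_2 + |\delta|\,\|\mathcal{T}_1\|_2)$; the numerator eigenvector norms match those in $\kappa_{\mathcal{T}_P}(\delta)$ up to scalar factors that cancel against the denominator; and $|1/\delta|\,|x_1^*(revP)'(1/\delta)x_2| = \frac{1}{|\delta|}\cdot|\delta|^{2-k}\,|x_1^*P'(\delta)x_2|$. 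Tracking the powers of $|\delta|$ and the eigenvector scalings carefully, everything reduces to the expression in \eqref{kT} for $\kappa_{\mathcal{T}_P}(\delta)$, giving the claimed equality. The main obstacle I anticipate is the bookkeeping: keeping the normalization scalars of the chosen eigenvectors consistent between the two pencils and making sure the powers of $\delta$ from the Horner-shift structure of $\Delta(\lambda)$ and from differentiating $revP$ combine to cancel exactly. An alternative, cleaner route — which I would mention as a remark — is to avoid the explicit formula entirely: observe that a scaling/reversal argument plus Lemma \ref{lcondrev}-style invariance shows $\kappa_{\mathcal{T}_P}(\delta)$ and $\kappa_{\mathcal{T}_{revP}}(1/\delta)$ are two instances of the same condition-number definition for strictly equivalent pencils related by the orthogonal transformation $RD$, under which the natural weights (spectral norms of the coefficients) are preserved, so the condition numbers coincide; however this needs the observation that reversal swaps the roles of the two pencil coefficients, which is exactly the content of Lemma \ref{LTrevP}.
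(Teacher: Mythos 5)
Your primary route through Lemma \ref{kappatp} is a genuinely different path from the paper's, and it can be made to work, but it does require the scalar bookkeeping you flag. The missing piece is the precise relation $DR\,\Delta(\delta) = \delta^{(k-1)/2}\Delta_{revP}(1/\delta)$ (the power $\delta^{(k-1)/2}$ comes from the top block $\lambda^{(k-1)/2}I_n$ of $\Delta$ landing at the bottom after flipping by $R$); that $|\delta|^{-(k-1)}$ from the two eigenvector-norm factors must then be combined with the $|\delta|^{2-k}$ you correctly derive from $(revP)'(1/\delta)$ and the extra $|1/\delta|$ factors in the numerator and denominator, and everything cancels to give $\kappa_{\mathcal{T}_P}(\delta)$. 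So the route closes, but it is noticeably heavier than necessary. The ``alternative, cleaner route'' you sketch at the end is in fact what the paper does: it never unpacks the pencil eigenvectors into the $\Delta(\delta)x$ form nor differentiates $revP$. The paper simply applies the general pencil formula of Theorem \ref{condform} to $\mathcal{T}_{revP}(\lambda)$ at $1/\delta$, uses Lemma \ref{LTrevP} to identify its coefficients as $\tilde{\mathcal{T}}_1=-RD\mathcal{T}_0DR$, $\tilde{\mathcal{T}}_0=-RD\mathcal{T}_1DR$ and its eigenvectors as $RDy$, $RDx$, invokes unitary invariance of the spectral norm, and finishes with the eigenvalue equation $\delta\mathcal{T}_1 x=\mathcal{T}_0 x$ to rewrite $|y^*\mathcal{T}_0 x|=|\delta||y^*\mathcal{T}_1 x|$. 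That last step is what replaces all of your Horner-shift and $(revP)'$ bookkeeping. If you write up the lemma, the paper's version is the one to use; your $\Delta$-based route is worth knowing as a sanity check but adds no generality here.
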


\begin{proof}
Let $\mathcal{T}_P(\lambda): = \lambda \mathcal{T}_1 - \mathcal{T}_0$. Then, from Lemma \ref{LTrevP}, 
\begin{align}\label{TrevP}
\mathcal{T}_{rev P}(\lambda) & = \lambda RD \mathcal{T}_{P}\left (\frac{1}{\lambda}\right ) DR = RD(\mathcal{T}_1 - \lambda \mathcal{T}_0) D R =: \lambda \tilde{\mathcal{T}}_1 - \tilde{\mathcal{T}}_0.
\end{align}

Moreover, if $y$ and $x$ are, respectively, a left and a right eigenvector of $\mathcal{T}_P(\lambda)$ associated with $\delta$, then $\tilde{y}=RDy$ and $\tilde{x}=RDx$ are, respectively,  a left and a right eigenvector  of $\mathcal{T}_{rev P}(\lambda)$ associated with $1/\delta$. Then, taking into account Theorem \ref{condform} and the fact that the Frobenius norm and  the spectral norm are unitarily invariant, we have 
\begin{align*} 
\kappa_{\mathcal{T}_{rev P}}\left (\frac{1}{\delta} \right) & = \frac{(\left | \frac{1}{\delta} \right | \|\tilde{\mathcal{T}}_1\|_2 + \|\tilde{\mathcal{T}}_0\|_2) \|\tilde{y}\|_2 \|\tilde{x}\|_2}{\left | \frac{1}{\delta} \right | | \tilde{y}^*\tilde{\mathcal{T}}_1 \tilde{x} |} \\
&=  \frac{( \|\tilde{\mathcal{T}}_1\|_2 + |\delta| \|\tilde{\mathcal{T}}_0\|_2) \|\tilde{y}\|_2 \|\tilde{x}\|_2}{ | \tilde{y}^*\tilde{T}_1 \tilde{x} |} = \frac{( \|\mathcal{T}_0\|_2 + |\delta| \|\mathcal{T}_1\|_2) \|y\|_2 \|x\|_2}{ | y^*\mathcal{T}_0 x |} \\
&= \frac{( \|\mathcal{T}_0\|_2 + |\delta| \|\mathcal{T}_1\|_2) \|y\|_2 \|x\|_2}{|\delta| | y^*\mathcal{T}_1 x |}= \kappa_{\mathcal{T}_P}(\delta),
\end{align*}
where the last equality follows from the fact that $(\delta \mathcal{T}_1 - \mathcal{T}_0)x =0.$
\end{proof}

\vspace{0.3cm}

\textbf{Proof of Theorem \ref{conditioningR}.}
Let $x_1$ and $x_2$ be, respectively, a left and a right eigenvector of $P(\lambda)$ associated with $\delta$.

We compare $\kappa_{\mathcal{T}_{P}}(\delta)$ with $\kappa_{P}(\delta)$.
Taking into account Lemmas \ref{lcondrev} and \ref{condPrevP}, we assume
$|\delta|\leq1,$ as otherwise we replace $P(\lambda)$ by $revP(\lambda)$ and
$\delta$ by $\frac{1}{\delta}.$ Note that both  $\rho_{1}$ and $\rho_{2}$  in (\ref{def-rho})
take the same value when considering $P(\lambda)$ and $revP(\lambda).$ From
Theorem \ref{condform} and Lemma \ref{kappatp}, we have
\begin{equation}\label{kappaT-P}
\frac{\kappa_{\mathcal{T}_P}(\delta)}{\kappa_{P}(\delta)}=\frac{(|\delta
|\Vert\mathcal{T}_{1}\Vert_{2}+\Vert\mathcal{T}_{0}\Vert_{2})\Vert
\Delta(\delta)x_{1}\Vert_{2}\Vert\Delta(\delta)x_{2}\Vert_{2}}{(\sum_{i=0}%
^{k}|\delta|^{i}\Vert A_{i}\Vert_{2})\Vert x_{1}\Vert_{2}\Vert x_{2}\Vert_{2}%
}.
\end{equation}
By Propositions \ref{prop} and  \ref{boundTp}, and taking into
account that each of the matrix coefficients of $\mathcal{T}_P(\lambda)$ contains an identity block, we get
\begin{align}\label{normT}
(|\delta|+1)\min\{\max\{1,\|A_k\|_2\}, \max\{1,\|A_0\|_2\} & \leq \nonumber \\
|\delta|\Vert
\mathcal{T}_{1}\Vert_{2} &+\Vert \mathcal{T}_{0}\Vert_{2}\leq 2 (|\delta|+1)\; \max_{i=0:k}\{1,\Vert
A_{i}\Vert_{2}\}. 
\end{align}
We also have 
 \begin{equation}\label{lowersumAi}
 \sum_{i=0}^k |\delta|^i \|A_i\|_2 \geq \|A_k\|_2 |\delta|^k + \|A_0\|_2 \geq \min\{\|A_k\|_2, \|A_0\|_2\} (|\delta|^k +1).
 \end{equation}
Thus, from (\ref{kappaT-P}) and the inequalities above, we get
\begin{equation}\label{quotientT}
\rho_2\frac{|\delta|+1}{\sum_{i=0}^k|\delta|^i} Q_T \leq \frac{\kappa_{\mathcal{T}_P}(\delta)}{\kappa_P(\delta)} \leq 2 \frac{\max_{i=0:k}\{1,\|A_i\|_2\}}{\min\{\|A_k\|_2, \|A_0\|_2\}} \frac{|\delta|+1}{|\delta|^k+1}Q_T,
\end{equation}
 where 
 \begin{equation}\label{QT}
 Q_T=\frac{\|\Delta(\delta) x_1\|_2 \|\Delta(\delta) x_2\|_2}{\|x_1\|_2\|x_2\|_2}\leq \|\Delta(\delta)\|_2^2\leq d_1(\delta) \max_{i=0:k} \{1, \|A_i\|_2^2\},
 \end{equation}
with $d_1(\delta)$ as in (\ref{Delta1}). Note that the last inequality follows from Lemma \ref{Deltax}.

 We now focus on the upper bound for $\frac{\kappa_{\mathcal{T}_P}(\delta)}{\kappa_P(\delta)}$. 
Taking into account Lemma \ref{d1bound}, and since $|\delta|\leq 1$, we obtain, for $k> 2$, 
\begin{align*}
\frac{|\delta|+1}{|\delta|^k+1} d_1(\delta) \leq 2 d_1(\delta) 
\leq  k^3.
\end{align*}

A better bound for $\frac{|\delta|+1}{|\delta|^k+1}d_1(\delta)$ can be obtained  when $|\delta| <<1$. More precisely, taking into account Lemma \ref{d1bound},  if 
$$ (k-1)^3|\delta|^{2} \leq k+1,$$
then,
$d_1(\delta) \leq k+1,$
implying that 
$$\frac{|\delta|+1}{|\delta|^k+1} d_1(\delta)
\leq 2(k+1).$$
Taking into account (\ref{quotientT}), (\ref{QT}) and the two previous upper bounds for $\frac{|\delta|+1}{|\delta|^k+1} d_1(\delta) $, the upper bound part of the theorem follows.

Next we show the  lower bound for $\frac{\kappa_{\mathcal{T}_P}(\delta)}{\kappa_P(\delta)} $. From  (\ref{first-line}) with $x$ replaced by $x_1$, we have
$$\|\Delta(\delta) x_1 \|_2^2 \geq  \|x_1\|_2^2\sum_{r=0}^{\frac{k-1}{2}}|\delta|^{2r}. $$
An analogous inequality holds for $x_2$. Thus,
\begin{align*}
\frac{|\delta|+1}{\sum_{i=0}^k |\delta|^i} Q_T & \geq \frac{(|\delta|+1) \sum_{r=0}^{\frac{k-1}{2}} |\delta|^{2r}}{
\sum_{i=0}^k |\delta|^i} =1,
\end{align*}
and the result follows from (\ref{quotientT}).

\subsection{Proof of Theorem \ref{backwardR}} \label{sec;backward}

The following lemma will allow us to only consider eigenvalues $\delta$ such that $|\delta| \leq 1$ when proving Theorem \ref{backwardR}.

\begin{lemma}
\label{backPrevP}Let $P(\lambda)$ be a regular matrix polynomial of odd degree
$k$ as in (\ref{pol}) with $A_0 \neq 0$, and let $(z,\delta)$ be an approximate right eigenpair
of $\mathcal{T}_{P}(\lambda)$, with $\delta \neq 0$. Then, $(RDz, \frac{1}{\delta})$ is an approximate right eigenpair of $\mathcal{T}_{rev P}(\lambda)$ and 
\[
\eta_{T_{P}}(z,\delta)=\eta_{\mathcal{T}_{rev P}}\left( R D z,\frac{1}{\delta
}\right), 
\]
where $R$ and $D$ are as in (\ref{flipR}) and (\ref{DD}), respectively.
\end{lemma}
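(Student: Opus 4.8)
\textbf{Proof proposal for Lemma \ref{backPrevP}.}

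The plan is to reduce everything to the relation $\mathcal{T}_{P}(\lambda)=\lambda DR\,\mathcal{T}_{revP}(1/\lambda)\,RD$ already established in Lemma \ref{LTrevP}, together with the explicit backward-error formula of Theorem \ref{back} applied to the pencils $\mathcal{T}_{P}(\lambda)=\lambda\mathcal{T}_1-\mathcal{T}_0$ and $\mathcal{T}_{revP}(\lambda)=\lambda\widetilde{\mathcal{T}}_1-\widetilde{\mathcal{T}}_0$. Recall from (\ref{TrevP}) that $\widetilde{\mathcal{T}}_1=RD\mathcal{T}_0 DR$ and $\widetilde{\mathcal{T}}_0=RD\mathcal{T}_1 DR$, so that $\|\widetilde{\mathcal{T}}_1\|_2=\|\mathcal{T}_0\|_2$ and $\|\widetilde{\mathcal{T}}_0\|_2=\|\mathcal{T}_1\|_2$ since $RD$ is unitary. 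First I would observe that the statement ``$(RDz,1/\delta)$ is an approximate right eigenpair of $\mathcal{T}_{revP}(\lambda)$'' is vacuous in the sense that it only requires $RDz\neq 0$, which holds because $z\neq 0$ and $RD$ is invertible; the real content is the equality of the two backward errors.

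The key computation is then to evaluate $\eta_{\mathcal{T}_{revP}}(RDz,1/\delta)$ using Theorem \ref{back}. Its numerator is $\|\mathcal{T}_{revP}(1/\delta)(RDz)\|_2$, and from Lemma \ref{LTrevP} we have $\mathcal{T}_{revP}(1/\delta)=\tfrac1\delta RD\,\mathcal{T}_{P}(\delta)\,DR$, so that $\mathcal{T}_{revP}(1/\delta)RDz=\tfrac1\delta RD\,\mathcal{T}_P(\delta)z$; taking norms and using that $RD$ is unitary gives numerator $\tfrac{1}{|\delta|}\|\mathcal{T}_P(\delta)z\|_2$. Likewise $\|RDz\|_2=\|z\|_2$, and the weighted sum in the denominator is $(|1/\delta|^0\|\widetilde{\mathcal{T}}_0\|_2+|1/\delta|^1\|\widetilde{\mathcal{T}}_1\|_2)=\|\mathcal{T}_1\|_2+\tfrac1{|\delta|}\|\mathcal{T}_0\|_2=\tfrac1{|\delta|}(|\delta|\|\mathcal{T}_1\|_2+\|\mathcal{T}_0\|_2)$. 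The factors of $1/|\delta|$ in numerator and denominator cancel, leaving exactly
\[
\eta_{\mathcal{T}_{revP}}\!\left(RDz,\tfrac1\delta\right)=\frac{\|\mathcal{T}_P(\delta)z\|_2}{(|\delta|\|\mathcal{T}_1\|_2+\|\mathcal{T}_0\|_2)\|z\|_2}=\eta_{\mathcal{T}_P}(z,\delta),
\]
again by Theorem \ref{back}, which finishes the proof.

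There is no serious obstacle here — the lemma is a bookkeeping exercise. The only point that requires a little care is tracking the power of $\delta$ coming out of $\mathcal{T}_{revP}(1/\delta)=\tfrac1\delta RD\,\mathcal{T}_P(\delta)\,DR$ and making sure it is matched by the corresponding power hidden in the weight $|1/\delta|$ attached to the linear coefficient in Theorem \ref{back}; once one checks that these cancel, the unitary invariance of the spectral and Euclidean norms under $RD$ does the rest. An alternative, even shorter route would be to invoke the general scaling/reversal identity $\eta_{\mathcal{T}_{revP}}(RDz,1/\delta)=\eta_{(revP)\text{-pencil}}$ in analogy with Lemma \ref{lbackrev}, but the direct substitution above is cleanest and self-contained.
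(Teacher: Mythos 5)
Your proof is correct and follows essentially the same route as the paper: apply the backward-error formula of Theorem \ref{back}, substitute $\mathcal{T}_{revP}(1/\delta)=\frac{1}{\delta}RD\,\mathcal{T}_P(\delta)\,DR$ from Lemma \ref{LTrevP}, and use unitarity of $RD$ so that the $1/|\delta|$ factors in numerator and denominator cancel. (A harmless sign slip: from (\ref{TrevP}) one in fact gets $\widetilde{\mathcal{T}}_1=-RD\mathcal{T}_0DR$ and $\widetilde{\mathcal{T}}_0=-RD\mathcal{T}_1DR$, but this is immaterial since only their norms enter.)
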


\begin{proof}
 Let $\mathcal{T}_P(\lambda): = \lambda \mathcal{T}_1 - \mathcal{T}_0$ and let $\mathcal{T}_{rev P}(\lambda):=\lambda \tilde{\mathcal{T}}_1 - \tilde{\mathcal{T}}_0.$ Note that (\ref{TrevP}) holds.  Then, from Theorem \ref{back} and Lemma \ref{LTrevP}, 


\begin{align*} 
\eta_{\mathcal{T}_{rev P}}\left (R D z, \frac{1}{\delta} \right) &= \frac{\| \mathcal{T}_{rev P}(\frac{1}{\delta}) RD z\|_2}{(\left |\frac{1}{\delta} \right |\|\tilde{\mathcal{T}}_1\|_2 +\|\tilde{\mathcal{T}}_0\|_2) \|R D z\|_2} = \frac{\| \frac{1}{\delta} \mathcal{T}_{ P}(\delta) z\|_2}{(\left |\frac{1}{\delta}\right | \|\tilde{\mathcal{T}}_1\|_2 +\|\tilde{\mathcal{T}}_0\|_2) \|z\|_2} \\
& = \frac{\| \mathcal{T}_{ P}(\delta) z\|_2}{ (\|\mathcal{T}_0\|_2 + |\delta| \|\mathcal{T}_1\|_2) \|z\|_2} = \eta_{\mathcal{T}_P}(z, \delta).\\
\end{align*}
\end{proof}

\vspace{0.3cm}

\textbf{Proof of Theorem \ref{backwardR}.}
Let $(z, \delta)$ be an approximate right eigenpair of $\mathcal{T}_P(\lambda):=\lambda \mathcal{T}_1 - \mathcal{T}_0$ and assume that $|\delta| \leq 1$.
Let $x:=(e_{k}^T \otimes I_n) z$. Note that $(x, \delta)$ can be seen as an approximate right eigenpair of $P(\lambda)$. First we show the upper bound in (\ref{back-bound}). 
We have
$$P(\delta)x = P(\delta) (e_k^T \otimes I_n)z =( e_k^T \otimes P(\delta))z= \Delta^{\mathcal{B}}(\delta) \mathcal{T}_P(\delta)z,$$
where the last equality follows from Lemma \ref{GFP-ans}. Thus,
\begin{align}
\frac{\eta_P( x, \delta)}{\eta_{\mathcal{T}_P}(z, \delta)}& =\frac{\norm{P(\delta)x}_2}{(\sum_{i=0}^{k} |\delta|^{i}\norm{A_i}_2)\norm{x}_2} \cdot \frac{(|\delta| \norm{\mathcal{T}_1}_2+\norm{\mathcal{T}_0}_2)\norm{z}_2}{\norm{\mathcal{T}_P(\delta)z}_2} \nonumber \\
&\leq \frac{\|\Delta^{\mathcal{B}}(\delta)\|_2 \|\mathcal{T}_P(\delta)z\|_2}{(\sum_{i=0}^k |\delta|^i \|A_i\|_2)\|x\|_2}\cdot \frac{(|\delta| \norm{\mathcal{T}_1}_2+\norm{\mathcal{T}_0}_2)\norm{z}_2}{\norm{\mathcal{T}_P(\delta)z}_2} \nonumber \\
& = \frac{\|\Delta(\delta)\|_2 (|\delta| \|\mathcal{T}_1\|_2 +\|\mathcal{T}_2\|_2)}{\sum_{i=0}^k |\delta|^i \|A_i\|_2} \cdot \frac{\|z\|_2}{\|x\|_2}\nonumber\\
& \leq 2 \frac{\max_{i=0:k} \{1,\|A_i\|_2\}}{\min \{\|A_k\|_2, \|A_0\|_2\}} \frac{|\delta| +1}{|\delta|^k +1} \frac{\|z\|_2}{\|x\|_2} \|\Delta(\delta)\|_2, \label{supercota}
\end{align}
where the last inequality follows from (\ref{lowersumAi}) and  the second inequality in (\ref{normT}). 

By Lemma \ref{Deltax}, we have
$$\|\Delta(\delta)\|_2\leq \sqrt{d_1(\delta)} \max_{i=0:k} \{1, \|A_i\|_2\},$$
where $d_1(\lambda)$ is as in (\ref{Delta1}).  Taking into account Lemma
\ref{d1bound}, since $|\delta|\leq 1$, we have, for $k> 2$, 
$$\frac{|\delta|+1}{|\delta|^k+1} \sqrt{d_1(\delta)} \leq \frac{|\delta|+1}{|\delta|^k+1} \sqrt{\frac{k+1}{2} +  \frac{(k-1)^3}{2}|\delta|^{2}} \leq 2k^{3/2}.$$
Thus, the upper bound in (\ref{back-bound}) follows by combining the two previous bounds with (\ref{supercota}). 

If $|\delta| \ll1$, a better bound can be obtained. More precisely, if 
$$  (k-1)^3 |\delta|^{2} \leq  k+1,$$
then
$$\frac{|\delta|+1}{|\delta|^k+1} \sqrt{d_1(\delta)} \leq  2\sqrt{k+1},$$ 
implying the upper bound for $\frac{\eta_P(x, \delta)}{\eta_{\mathcal{T}_P(z, \delta)}}$ in (\ref{second-back-T}). 

Now suppose that $|\delta|>1$. From Lemmas \ref{lbackrev}
and \ref{backPrevP}, we have
\[
\frac{\eta_{P}((e_{1}^{T}\otimes I_{n})z,\delta)}{\eta_{\mathcal{T}_{P}%
}(z,\delta)}=\frac{\eta_{revP}((e_{k}^{T}\otimes I_{n})RDz,\frac{1}{\delta}%
)}{\eta_{\mathcal{T}_{revP}}(RDz,\frac{1}{\delta})}.
\]
Taking into account Lemma \ref{LTrevP} and since $|\frac{1}{\delta}|<1,$ by
the part of the theorem already proved, we have that (\ref{back-bound}) and
(\ref{second-back-T}) hold with $\frac{\eta_{P}(x,\delta)}{\eta_{T_{P}%
}(z,\delta)}$ replaced by
\[
\frac{\eta_{revP}((e_{k}^{T}\otimes I_{n})RDz,\frac{1}{\delta})}%
{\eta_{\mathcal{T}_{revP}}(RDz,\frac{1}{\delta})}.
\]
Note that%
\[
\frac{\Vert z\Vert_{2}}{\Vert(e_{1}^{T}\otimes I_{n})z\Vert_{2}}=\frac{\Vert
RDz\Vert_{2}}{\Vert(e_{k}^{T}\otimes I_{n})RDz\Vert_{2}}.
\]

\section{Conditioning and backward error of  $D_1(\lambda, P)$, $D_k(\lambda,P)$ and $C_1(\lambda)$}\label{cond-D1Dk}

 In this section we present results analogous to Theorems \ref{conditioningR} and \ref{backwardR} for the linearizations $D_1(\lambda, P)$, $D_k(\lambda, P)$ and $C_1(\lambda)$. We note that these results were previously obtained in \cite{backward} and \cite{tisseur}. We  include them for completeness with the goal of comparing the conditioning and backward error of $\mathcal{T}_P(\lambda)$ and those of the linearizations $D_1(\lambda, P)$, $D_k(\lambda, P)$, and $C_1(\lambda)$. With respect to the conditioning, we give some improvements in the bounds, which allow us to obtain a more accurate comparison  of the different linearizations.

We start by recalling the definition of the pencils that we are considering in this section.  The reader can find more details in \cite{HMMT, 4m-vspace}.

Let $P(\lambda)$ be a matrix polynomial of degree $k$ as in (\ref{pol}) and assume that $k\geq 2$. We have
$$D_1(\lambda, P) :=\lambda  \left [ \begin{array}{c|cccc} A_k &  & &  &\\ \hline  & -A_{k-2} & -A_{k-3} & \cdots & -A_0 \\  & -A_{k-3} & -A_{k-4} & \cdots & 0 \\ & \vdots & \iddots & & \vdots \\ & - A_0 & 0  & \cdots & 0\end{array} \right]- \left[ \begin{array}{ccccc} 
-A_{k-1} & -A_{k-2} & \cdots & -A_1 & -A_0 \\ -A_{k-2} & -A_{k-3} & \cdots & -A_0 & 0 \\ \vdots &  \iddots & & &\vdots  \\ -A_0 & 0 & \cdots & \cdots & 0\end{array} \right], $$
$$D_k(\lambda, P) :=\lambda  \left [ \begin{array}{cccc} 0 & \cdots   & 0  & A_k \\
0 &\cdots &  A_k & A_{k-1} \\ \vdots & \iddots &\vdots & \vdots  \\ A_k  & \cdots & A_2 & A_1 \end{array} \right] - \left[ \begin{array}{cccc|c}  0 & \cdots & 0 & A_k & \\ 0 & \cdots & A_k & A_{k-1}\\ \vdots & \iddots & & \vdots & \\ A_k & \cdots & A_3 & A_2 & \\
\hline
&&&& -A_0 \end{array}\right], $$
$$C_1(\lambda):= \lambda \left[ \begin{array}{cccc}  A_k &&& \\ & I_n &&\\ & & \ddots & \\ &&& I_n \end{array}\right] - 
\left[ \begin{array}{cccc} -A_{k-1} & -A_{k-2} & \cdots & -A_0 \\ I_n & 0 & \cdots & 0 \\ \vdots & \ddots & \ddots & \vdots \\ 
0 & \cdots & I_n & 0\end{array}\right].$$

We emphasize that $C_1(\lambda)$ is the very well-known first Frobenius companion form, which is
fundamental in the theory and in the numerical computations of matrix polynomials \cite{GLR-book2}. The block-symmetric pencils $D_1(\lambda, P)$ and $D_k(\lambda, P)$ have been thoroughly studied recently in \cite{SCDLP, backward, tisseur, HMMT, 4m-vspace, goodvibrations}, although they were introduced as early as in \cite{lancaster}.  Recall that $D_1(\lambda, P)$, $D_k(\lambda, P) \in \mathbb{DL}(P)$, where $\mathbb{DL}(P)$ denotes the vector space of pencils defined in \cite{4m-vspace}.

When comparing the conditioning of eigenvalues and the backward error of approximate eigenpairs of  a regular $P(\lambda)$ with those of any of its linearizations $D_1(\lambda,P)$, $D_k(\lambda,P)$ and $C_1(\lambda)$, the next two lemmas will be useful since they provide a way to recover an eigenvector of a matrix polynomial $P(\lambda)$ from an eigenvector of $D_1(\lambda,P)$, $D_k(\lambda, P)$ and $C_1(\lambda)$.  We will use the following notation:
\begin{equation}\label{delta}
\Lambda(\lambda)=\left[  \lambda
^{k-1}\text{ }\cdots\lambda\text{ }1\right]  ^{T}.
\end{equation}

\begin{lemma}\label{eigD1Dk}\cite[Theorem 3.8]{4m-vspace}
Let $P(\lambda)$ be an $n\times n$  regular matrix polynomial of degree $k$ and  $\delta$ be a finite eigenvalue of $P(\lambda)$. Let $L(\lambda) \in \mathbb{DL}(P)$ be a linearization of $P(\lambda)$. A vector $z$ is a right eigenvector of $L(\lambda)$ associated with  $\delta$ if and only if $z=\Lambda(\delta) \otimes x$ for some  right eigenvector $x$ of $P(\lambda)$ associated with $\delta$. Similarly, a vector $\omega$ is a left eigenvector of $L(\lambda)$ associated with $\delta$ if and only if $\omega=\overline{\Lambda(\delta)} \otimes y$  for  some left eigenvector $y$  of $P(\lambda)$ associated with $\delta$.  
\end{lemma}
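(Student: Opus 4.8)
The plan is to derive the result directly from the two defining ansatz identities of the space $\mathbb{DL}(P)$ together with the fact that a linearization of a regular matrix polynomial preserves the geometric multiplicity of each finite eigenvalue; the argument is the exact analogue, in the $\mathbb{DL}(P)$ setting, of the proof of Theorem \ref{eig-T}, with the pair of ansatz identities playing the role that Lemma \ref{GFP-ans} plays there. Recall that $L(\lambda)\in\mathbb{DL}(P)$ means that there is a single vector $v\in\mathbb{C}^{k}$ for which
\begin{equation}\label{ansatz-plan}
L(\lambda)\,(\Lambda(\lambda)\otimes I_n)=v\otimes P(\lambda)
\qquad\text{and}\qquad
(\Lambda(\lambda)^{T}\otimes I_n)\,L(\lambda)=v^{T}\otimes P(\lambda),
\end{equation}
where $\Lambda(\lambda)$ is as in (\ref{delta}).

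First I would prove the ``if'' direction. If $x$ is a right eigenvector of $P(\lambda)$ associated with $\delta$, then evaluating the first identity in (\ref{ansatz-plan}) at $\delta$ and applying it to $x$ gives $L(\delta)(\Lambda(\delta)\otimes x)=v\otimes P(\delta)x=0$; since the last entry of $\Lambda(\delta)$ is $1$, the vector $\Lambda(\delta)\otimes x$ has $x$ as its last $n\times1$ block and is hence nonzero, so it is a right eigenvector of $L(\lambda)$ for $\delta$. Dually, if $y$ is a left eigenvector of $P(\lambda)$ for $\delta$, then evaluating the second identity in (\ref{ansatz-plan}) at $\delta$ and multiplying on the left by $y^{*}$ (acting on the $n\times1$ blocks) gives $(\overline{\Lambda(\delta)}\otimes y)^{*}L(\delta)=v^{T}\otimes(y^{*}P(\delta))=0$, and $\overline{\Lambda(\delta)}\otimes y\neq0$ for the same reason, so it is a left eigenvector of $L(\lambda)$ for $\delta$.

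For the ``only if'' direction I would argue by a dimension count, exactly as in the proof of Theorem \ref{eig-T}. Since $L(\lambda)$ is a linearization of the regular polynomial $P(\lambda)$, it shares the finite elementary divisors of $P(\lambda)$, so $\delta$ has the same geometric multiplicity $m$ as an eigenvalue of $P(\lambda)$ and of $L(\lambda)$. If $\{x_{1},\dots,x_{m}\}$ is a basis of the right eigenspace of $P(\lambda)$ at $\delta$, then $\{\Lambda(\delta)\otimes x_{1},\dots,\Lambda(\delta)\otimes x_{m}\}$ is linearly independent, because the map $u\mapsto\Lambda(\delta)\otimes u$ is injective (as $\Lambda(\delta)\neq0$), and it lies in the right eigenspace of $L(\lambda)$ at $\delta$ by the ``if'' direction; being $m$ linearly independent vectors in an $m$-dimensional space, it is a basis of that eigenspace. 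Hence any right eigenvector $z$ of $L(\lambda)$ at $\delta$ is of the form $\sum_{j}c_{j}(\Lambda(\delta)\otimes x_{j})=\Lambda(\delta)\otimes\big(\sum_{j}c_{j}x_{j}\big)$, with $\sum_{j}c_{j}x_{j}$ a right eigenvector of $P(\lambda)$, as claimed. The left-eigenvector statement follows in the same way, with $\overline{\Lambda(\delta)}$ in place of $\Lambda(\delta)$ and ``left'' in place of ``right'' throughout.

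The only external input is the equality of geometric multiplicities, which is immediate from the definition of linearization; the remainder is routine manipulation of Kronecker products using $(A\otimes B)(C\otimes D)=(AC)\otimes(BD)$. Consequently I anticipate no genuine obstacle here: the proof is essentially a transcription of the argument already carried out for Theorem \ref{eig-T}.
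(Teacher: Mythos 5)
Your proof is correct. The paper itself does not prove this lemma—it cites it directly from Theorem~3.8 of the reference \texttt{4m-vspace}—but your argument is the right one: it correctly invokes the defining ansatz identities of $\mathbb{DL}(P)$ for the ``if'' direction and then uses equality of geometric multiplicities (which holds because a linearization of a regular polynomial preserves the finite elementary divisors) for the ``only if'' direction. This is exactly the same dimension-counting template the paper uses in its own proof of Theorem~\ref{eig-T}, and it also matches the structure of the original source's proof, so there is no methodological divergence worth flagging. The Kronecker-product manipulations you indicate (in particular $y^{*}(\Lambda(\delta)^{T}\otimes I_n)=\Lambda(\delta)^{T}\otimes y^{*}=(\overline{\Lambda(\delta)}\otimes y)^{*}$, and the observation that $\Lambda(\delta)\otimes x\neq0$ because the last entry of $\Lambda(\delta)$ is $1$) are all accurate, and the final step of pulling the scalar coefficients through the Kronecker product to write $\sum_j c_j(\Lambda(\delta)\otimes x_j)=\Lambda(\delta)\otimes(\sum_j c_j x_j)$, with $\sum_j c_j x_j\neq0$ because $z\neq0$, closes the argument cleanly.
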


\begin{lemma}\label{eigC1}\cite[Section 1 and Lemma 7.2]{tisseur}
Let $P(\lambda)$ be an $n\times n$ regular matrix polynomial of degree $k$ and $\delta$ be a finite eigenvalue of $P(\lambda)$.  A vector $z$ is a right eigenvector of $C_1(\lambda)$ associated with  $\delta$ if and only if $z=\Lambda(\delta) \otimes x$ for some right eigenvector $x$ of $P(\lambda)$ associated with $\delta$. 

A vector $\omega$ is a left eigenvector of $C_1(\lambda)$ associated with  $\delta$ if and only if $\omega^*= y^*[I_n, P_1(\delta), \ldots, P_{k-2}(\delta), \allowbreak P_{k-1}(\delta)]$,  for some left eigenvector $y$ of $P(\lambda)$ associated with $\delta$, where $P_i$ is as in (\ref{pol-Pi}).
Thus, if $\omega$ is a left eigenvector of $C_1(\lambda)$ with eigenvalue $\delta$, then $y=(e_1^T \otimes I_n) \omega$ is a left eigenvector of $P(\lambda)$ with eigenvalue $\delta$. Moreover, any left eigenvector $y$  of $P(\lambda)$ with eigenvalue $\delta$ can be recovered from some left eigenvector $\omega$ of  $C_1(\lambda)$ by taking $y= (e_1^T\otimes I_n)\omega$. 
\end{lemma}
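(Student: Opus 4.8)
The plan is to mirror the argument already used for $\mathcal{T}_P(\lambda)$ in Section~\ref{GFP} (Lemma~\ref{GFP-ans} and Theorem~\ref{eig-T}): first establish two ``companion identities'' for $C_1(\lambda)$ by a direct block computation, then read off one inclusion immediately and obtain the reverse inclusion from the fact that $C_1(\lambda)$ is a linearization of $P(\lambda)$, so that a finite eigenvalue $\delta$ has the same geometric multiplicity for $P(\lambda)$ and for $C_1(\lambda)$.

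First I would verify the right identity $C_1(\lambda)\,(\Lambda(\lambda)\otimes I_n)=e_1\otimes P(\lambda)$, with $\Lambda(\lambda)$ as in (\ref{delta}): the first block row of the product equals $\lambda^k A_k+\lambda^{k-1}A_{k-1}+\cdots+A_0=P(\lambda)$, while every other block row of $C_1(\lambda)$ has the form $[\,0,\dots,0,-I_n,\lambda I_n,0,\dots,0\,]$ and hence contributes $-\lambda^{k-i+1}I_n+\lambda\cdot\lambda^{k-i}I_n=0$. Evaluating at $\delta$ shows that $\Lambda(\delta)\otimes x$ is a right eigenvector of $C_1(\lambda)$ associated with $\delta$ whenever $x$ is a right eigenvector of $P(\lambda)$ associated with $\delta$ (it is nonzero since its last block is $x$). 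For the converse I would argue exactly as in Theorem~\ref{eig-T}: the linear map $x\mapsto\Lambda(\delta)\otimes x$ is injective, so it maps a basis of $\ker P(\delta)$ to a linearly independent set inside $\ker C_1(\delta)$; since those two kernels have the same dimension ($C_1(\lambda)$ being a linearization of $P(\lambda)$), that set is a basis, whence every right eigenvector of $C_1(\lambda)$ for $\delta$ has the form $\Lambda(\delta)\otimes x$.

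For the left eigenvectors I would verify the dual identity $[\,I_n,\ P_1(\lambda),\ \dots,\ P_{k-1}(\lambda)\,]\,C_1(\lambda)=e_k^T\otimes P(\lambda)$, with $P_i$ the Horner shifts of (\ref{pol-Pi}). Using the Horner recursion (\ref{horner}) in the form $P_j(\lambda)=\lambda P_{j-1}(\lambda)+A_{k-j}$, the first block of the product is $(\lambda A_k+A_{k-1})-P_1(\lambda)=0$, the $j$th block for $2\le j\le k-1$ is $A_{k-j}+\lambda P_{j-1}(\lambda)-P_j(\lambda)=0$, and the last block is $A_0+\lambda P_{k-1}(\lambda)=P(\lambda)$. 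Evaluating at $\delta$ and premultiplying by $y^*$ shows that $\omega^*=y^*[\,I_n,P_1(\delta),\dots,P_{k-1}(\delta)\,]$ is a left eigenvector of $C_1(\lambda)$ associated with $\delta$ whenever $y$ is a left eigenvector of $P(\lambda)$ associated with $\delta$; it is nonzero because its first block is $y$. The same injectivity-plus-dimension-count argument then gives that every left eigenvector of $C_1(\lambda)$ for $\delta$ is of this form. Finally, reading off the first block of such an $\omega$ yields $(e_1^T\otimes I_n)\omega=y$, which simultaneously proves the ``if $\omega$ is a left eigenvector\dots'' assertion and the ``any left eigenvector $y$\dots can be recovered\dots'' assertion.

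I expect no genuine obstacle here; the only point requiring care is the bookkeeping in the left-hand identity — aligning the Horner-shift indices with the block columns of $C_1(\lambda)$ and noticing that the leading entry of the row vector is $I_n$ rather than the Horner shift $P_0(\lambda)=A_k$. Everything else is the $C_1(\lambda)$-analogue of material already proved for $\mathcal{T}_P(\lambda)$.
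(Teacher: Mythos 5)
Your proof is correct. The paper does not actually prove this lemma; it cites it from \cite{tisseur} (Section 1 and Lemma 7.2 there), so there is no in-paper argument to compare against. Your derivation is a faithful $C_1(\lambda)$-analogue of the technique the paper uses for $\mathcal{T}_P(\lambda)$ in Lemma~\ref{GFP-ans} and Theorem~\ref{eig-T}: verify the right identity $C_1(\lambda)(\Lambda(\lambda)\otimes I_n)=e_1\otimes P(\lambda)$ and the left identity $[\,I_n,P_1(\lambda),\dots,P_{k-1}(\lambda)\,]C_1(\lambda)=e_k^T\otimes P(\lambda)$ by a block computation with the Horner recursion (\ref{horner}), read off one inclusion, and close the argument by injectivity plus the equality of geometric multiplicities that comes from $C_1(\lambda)$ being a linearization. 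Both block computations check out (column $1$ gives $\lambda A_k+A_{k-1}-P_1=0$, columns $2\le j\le k-1$ give $A_{k-j}+\lambda P_{j-1}-P_j=0$ by Horner, column $k$ gives $A_0+\lambda P_{k-1}=P$), and the nonvanishing of the leading/trailing block guarantees the injectivity you invoke. Nothing is missing.
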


\subsection{Conditioning and backward error of $D_1(\lambda,P)$ and $D_k(\lambda,P)$}

Next we recall some well-known results on the conditioning of eigenvalues and backward error of approximate eigenpairs of
the linearizations $D_1(\lambda, P)$ and $D_k(\lambda, P)$  of a regular  matrix polynomial  $P(\lambda)$ introduced previously in \cite{backward, tisseur}.  Moreover, we sharpen one of the results in \cite{tisseur}. Our goal is to compare the results in  Section \ref{main}  for $\mathcal{T}_P(\lambda)$ and $\mathcal{R}_P(\lambda)$  with those for $D_1(\lambda, P)$ and $D_k(\lambda, P)$. 

Recall that $D_1(\lambda, P)$ (resp. $D_k(\lambda, P)$) is a linearization of a regular matrix polynomial  $P(\lambda)$ as in (\ref{pol}) if and only if $A_0$ (resp. $A_k$) is nonsingular. 

In \cite{tisseur}, it was shown that, for a simple, finite, nonzero eigenvalue $\delta$ of a regular  $P(\lambda)$ as in (\ref{pol}) with $A_0\neq 0$,  the condition number of $\delta$ as an eigenvalue  of $D_1(\lambda, P)$ (resp. $D_k(\lambda, P)$), when $A_0$ (resp. $A_k$)  is nonsingular, is close to optimal among the linearizations of   $P(\lambda)$  in $\mathbb{DL}(P)$, when $|\delta| \geq 1$ (resp.  $|\delta | \leq 1$), provided that 
\begin{equation}
\rho:=\frac{\max_{i=0:k}\{\| A_{i}\|_{2}\}}{\min \{\Vert A_{k}\Vert_{2},\Vert
A_{0}\Vert_{2}\}}. \label{rhoD}%
\end{equation}
is of order 1, which  implies, in particular, that  all matrix coefficients of $P(\lambda)$ must have similar norm. 
Notice  that  $\rho \geq 1$.

The combination of Theorems 4.4 and 4.5 in \cite{tisseur} provides a lower and an upper bound for  $\frac{\kappa_{D_t}(\delta)}{\kappa_P(\delta)}$,  when either $t=1$ and $|\delta| \geq 1$, or $t=k$ and $|\delta| \leq 1$, namely,
$$\left (\frac{2\sqrt{k}}{k+1}\right) \frac{1}{\rho}\leq\frac{\kappa_{D_{t}}(\delta)}
{\kappa_{P}(\delta)}\leq\sqrt{k^{7}}\rho^{2}.$$

Using the same techniques as those applied to compute the bounds in Theorem \ref{conditioningR}, next we deduce sharper bounds  for the quotient $\frac{\kappa_{D_t}(\delta)}{\kappa_P(\delta)}$ than those provided in \cite{tisseur}. Based on the results obtained,  we can provide a fair comparison of the linearizations   $\mathcal{T}_P(\lambda)$, $D_1(\lambda, P)$ and $D_k(\lambda, P)$ with respect to conditioning and explain the numerical experiments in Section \ref{numerical} appropriately.


\begin{theorem}\label{Di-cond}
Let $P(\lambda)$ be a regular matrix polynomial of degree $k$ as in (\ref{pol}) with $A_0\neq 0$. Assume that  $\delta$ is a simple, finite, nonzero eigenvalue of $P(\lambda)$. Let  $t\in \{1, k\}$ and suppose that $A_0$ is nonsingular if $t=1$, and $A_k$ is nonsingular if $t=k$. Let  $\rho$ be as in (\ref{rhoD}). 
Then, if either  $t=1$ and $|\delta| \geq 1$, or $t=k$ and $|\delta| \leq  1$, we have 
$$\frac{1}{\rho} \leq \frac{\kappa_{D_t}(\delta)}{\kappa_{P}(\delta)}\leq k^2 \rho .$$
 \end{theorem}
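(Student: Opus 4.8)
The plan is to mimic the proof of Theorem \ref{conditioningR}, replacing the recovery map $\Delta(\lambda)$ for $\mathcal{T}_P(\lambda)$ by the recovery map $\Lambda(\lambda)\otimes I_n$ for the $\mathbb{DL}(P)$ pencils. First I would note that it suffices to treat one case, say $t=k$ with $|\delta|\le 1$, because the case $t=1$ with $|\delta|\ge 1$ reduces to it via $revP(\lambda)$: indeed $D_1(\lambda,P)$ and $D_k(\lambda,revP)$ are related by a flip (and $\rho$ in (\ref{rhoD}) is invariant under $P\mapsto revP$), exactly as in Lemma \ref{lcondrev}. So assume $t=k$ and $|\delta|\le 1$, with $A_k$ nonsingular.

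Next I would write down the condition number of $\delta$ as an eigenvalue of $D_k(\lambda,P):=\lambda (D_k)_1-(D_k)_0$. By Lemma \ref{eigD1Dk}, the right and left eigenvectors of $D_k(\lambda,P)$ at $\delta$ are $\Lambda(\delta)\otimes x_2$ and $\overline{\Lambda(\delta)}\otimes x_1$, where $x_1,x_2$ are left/right eigenvectors of $P(\lambda)$. An analogue of Lemma \ref{kappatp} (using $[\Lambda(\lambda)\otimes I_n] \, \text{(appropriate block row)} \, D_k(\lambda,P)=\ldots$, i.e. the defining property of $\mathbb{DL}(P)$ that $D_k(\lambda,P)\,(\Lambda(\lambda)\otimes I_n)=e_1\otimes P(\lambda)$ up to sign conventions, differentiated at $\delta$) gives
\begin{equation*}
\kappa_{D_k}(\delta)=\frac{(|\delta|\,\|(D_k)_1\|_2+\|(D_k)_0\|_2)\,\|\Lambda(\delta)\otimes x_1\|_2\,\|\Lambda(\delta)\otimes x_2\|_2}{|\delta|\,|x_1^*P'(\delta)x_2|}.
\end{equation*}
Since $\|\Lambda(\delta)\otimes x_i\|_2=\|\Lambda(\delta)\|_2\,\|x_i\|_2$ and $\|\Lambda(\delta)\|_2^2=\sum_{j=0}^{k-1}|\delta|^{2j}$, dividing by the formula (\ref{num-cond}) for $\kappa_P(\delta)$ with natural weights yields
\begin{equation*}
\frac{\kappa_{D_k}(\delta)}{\kappa_P(\delta)}=\frac{(|\delta|\,\|(D_k)_1\|_2+\|(D_k)_0\|_2)\,\sum_{j=0}^{k-1}|\delta|^{2j}}{\bigl(\sum_{i=0}^{k}|\delta|^{i}\|A_i\|_2\bigr)}.
\end{equation*}

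Now I would bound the pencil norms. Using Proposition \ref{prop} on the block structure of $D_k(\lambda,P)$, each of $(D_k)_1$ and $(D_k)_0$ is a $k\times k$ block matrix whose blocks are among $0,\pm A_1,\dots,\pm A_k$, so $\|(D_k)_1\|_2,\|(D_k)_0\|_2\le k\max_{i}\|A_i\|_2$; for a lower bound, each contains a block equal to $\pm A_k$ (for $(D_k)_1$) or $\pm A_0$ (for $(D_k)_0$ contains $-A_0$), so $|\delta|\|(D_k)_1\|_2+\|(D_k)_0\|_2\ge |\delta|\,\|A_k\|_2+\|A_0\|_2\ge \min\{\|A_k\|_2,\|A_0\|_2\}(|\delta|+1)$ — and in fact, for the sharp lower bound $1/\rho$, it is cleaner to use $|\delta|\|(D_k)_1\|_2+\|(D_k)_0\|_2\ge \max\{|\delta|\|A_k\|_2,\|A_0\|_2\}$ combined with the denominator bound $\sum_i|\delta|^i\|A_i\|_2\le (\max_i\|A_i\|_2)\sum_{i=0}^k|\delta|^i$. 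For the lower bound I would also use $\sum_{j=0}^{k-1}|\delta|^{2j}\ge \sum_{j=0}^{k-1}|\delta|^{2j}$ against $\sum_{i=0}^{k}|\delta|^i$; since $|\delta|\le 1$ one has $\sum_{i=0}^k|\delta|^i\le (k+1)/k\cdot\sum_{j=0}^{k-1}|\delta|^{2j}\cdot(\text{something})$ — more directly, the ratio $\frac{(|\delta|+1)\sum_{j=0}^{k-1}|\delta|^{2j}}{\sum_{i=0}^k|\delta|^i}\ge 1$ just as in the last display of the proof of Theorem \ref{conditioningR}. Putting these together gives $\kappa_{D_k}(\delta)/\kappa_P(\delta)\ge 1/\rho$. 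For the upper bound, combining $|\delta|\|(D_k)_1\|_2+\|(D_k)_0\|_2\le k(\max_i\|A_i\|_2)(|\delta|+1)$, the lower bound $\sum_i|\delta|^i\|A_i\|_2\ge\min\{\|A_k\|_2,\|A_0\|_2\}(|\delta|^k+1)$, and $\frac{(|\delta|+1)\sum_{j=0}^{k-1}|\delta|^{2j}}{|\delta|^k+1}\le 2\sum_{j=0}^{k-1}|\delta|^{2j}\le 2k$ (since $|\delta|\le1$), yields $\kappa_{D_k}(\delta)/\kappa_P(\delta)\le 2k^2\rho$; a slightly more careful accounting of the factor $(|\delta|+1)/(|\delta|^k+1)\le 2$ and $\sum_{j=0}^{k-1}|\delta|^{2j}\le k$ absorbs the $2$ (or one invokes that $k\ge 2$ and $\rho\ge1$ to reach exactly $k^2\rho$).

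\textbf{The main obstacle} I anticipate is getting the constants to come out as tightly as the stated $\frac1\rho\le\frac{\kappa_{D_t}(\delta)}{\kappa_P(\delta)}\le k^2\rho$ rather than a cruder $C k^2\rho$; this requires being careful about which block of $(D_k)_0$ and $(D_k)_1$ to use for the lower bounds on the pencil norms and about exploiting $|\delta|\le 1$ to convert $\sum_{i=0}^k|\delta|^i$ and $\sum_{j=0}^{k-1}|\delta|^{2j}$ into clean bounds — essentially the same balancing act as in (\ref{quotientT})--(\ref{QT}) but without the extra $d_1(\delta)$ blow-up, since $\|\Lambda(\delta)\otimes x_i\|_2$ factors exactly. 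The reduction to $|\delta|\le1$ via $revP$ and the symmetric treatment of $D_1$ versus $D_k$ is routine given Lemma \ref{lcondrev} and the flip relating the two pencils.
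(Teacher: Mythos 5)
Your overall route is essentially the paper's: you use the same eigenvector recovery (Lemma \ref{eigD1Dk}), the same formula for $\kappa_{D_t}(\delta)/\kappa_P(\delta)$ (the paper cites \cite[Theorem 3.2]{tisseur}; your re-derivation by differentiating the $\mathbb{DL}(P)$ ansatz identity is equivalent to Lemma \ref{kappatp}), and the same bounds from Proposition \ref{prop} and (\ref{lowersumAi}). One difference is cosmetic: you reduce $t=1$, $|\delta|\ge1$ to $t=k$, $|\delta|\le1$ via $revP$ and a flip relating $D_1(\lambda,P)$ to $D_k(\lambda,revP)$, whereas the paper keeps both cases and carries the factor $|\delta|^{k-t}$ through the formula (\ref{quotDi}). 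Your reduction works (indeed $rev\,D_1(\lambda,P)=R\,D_k(\lambda,revP)\,R$ with $R$ the block flip, and both reversal and unitary conjugation leave the condition number unchanged), but it requires you to state and check this flip lemma, which the paper's unified treatment avoids.

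There is, however, a genuine gap where you yourself flag the obstacle: your bound gives $2k^2\rho$, not $k^2\rho$, and neither of your proposed fixes closes it. Invoking $k\ge2$ and $\rho\ge1$ cannot absorb a multiplicative $2$ into $k^2\rho$ (that would need $2k^2\rho\le k^2\rho$, which is false), and bounding $(|\delta|+1)/(|\delta|^k+1)\le2$ and $\sum_{j=0}^{k-1}|\delta|^{2j}\le k$ \emph{separately} irrevocably wastes a factor of $2$. The paper's trick is to keep these two factors together: since
\begin{equation*}
(|\delta|+1)\sum_{j=0}^{k-1}|\delta|^{2j}=\sum_{i=0}^{2k-1}|\delta|^{i},
\end{equation*}
one has for $|\delta|<1$ (the case $|\delta|=1$ is direct)
\begin{equation*}
\frac{(|\delta|+1)\sum_{j=0}^{k-1}|\delta|^{2j}}{|\delta|^{k}+1}
=\frac{1-|\delta|^{2k}}{(1-|\delta|)(1+|\delta|^{k})}
=\frac{1-|\delta|^{k}}{1-|\delta|}
=\sum_{i=0}^{k-1}|\delta|^{i}\le k,
\end{equation*}
which combined with the prefactor $k\rho$ gives exactly $k^2\rho$. (In the paper's unified version the same factorization handles $t=1$, $|\delta|>1$ as well, with the extra $|\delta|^{k-t}$ in the denominator cancelling against $\max\{1,|\delta|^{k-1}\}$.) Your lower-bound argument, by contrast, is fine and matches the paper's: $(|\delta|+1)\sum_{j=0}^{k-1}|\delta|^{2j}=\sum_{i=0}^{2k-1}|\delta|^{i}\ge\sum_{i=0}^{k}|\delta|^{i}$ for $|\delta|\le1$, so the ratio is $\ge 1/\rho$.
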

 
 \begin{proof}
 Let $x$ and $y$ be a right and a left eigenvector of $P(\lambda)$
associated with $\delta$, respectively. Let  $\Lambda(\lambda)$ be as in (\ref{delta}).
 Let  $t\in\{1,k\},$  and define $D_{t}
(\lambda,P) : =L_{1}^t\lambda-L_{0}^t$. Taking into account Theorem 3.2 in 
\cite{tisseur}, and using the natural weights for $\kappa_{D_t}(\delta)$, we have
\begin{equation}
\frac{\kappa_{D_{t}}(\delta)}{\kappa_{P}(\delta)}=\frac{(|\delta|\Vert
L_{1}^t\Vert_{2}+\Vert L_{0}^t\Vert_{2})\Vert\Lambda(\delta)\Vert_{2}^{2}}%
{|\delta|^{k-t}\sum_{i=0}^{k}|\delta|^{i}\Vert A_{i}\Vert_{2}}. \label{quotDi}%
\end{equation}
By Proposition \ref{prop} and taking into account that all block-entries of
$L_{1}^t$ and $L_{0}^t$ are either $0$ or matrix coefficients of $P(\lambda)$, we
obtain
\begin{equation}\label{boundsL1L0}
  |\delta| \|L_1^t\|_2 + \|L_0^t\|_2 \leq (|\delta|+1) k\; \max_{i=0:k}\{\|A_i\|_2\}.
 \end{equation}
Thus, from (\ref{lowersumAi}) and  (\ref{quotDi}), we obtain 
\begin{align}\label{quotient1}
\frac{\kappa_{D_{t}}(\delta)}{\kappa_{P}(\delta)}&\leq \frac{k\; \max_{i=0:k} \{
\|A_{i}\|_2 \}}{\min \{\| A_{k}\|_2,\Vert A_{0}\Vert_{2}\}}\frac
{( |\delta |+1)\sum_{i=0}^{k-1}|\delta|^{2i}}{(|\delta|^{k}+1)|\delta|^{k-t}}
= k \rho \frac
{\sum_{i=0}^{2k-1}|\delta|^{i}}{(|\delta|^{k}+1)|\delta|^{k-t}}.
\end{align}
Clearly, if $|\delta|=1,$ the upper bound in the statement follows. 
Now
suppose that $t=1$ and $|\delta | >1$, or $t=k$ and $|\delta|<1$.  Since $|\delta|^{2k}-1= (|\delta|-1) \sum_{i=0}^{2k-1} |\delta|^i$, from (\ref{quotient1}), we have
\begin{align*}
\frac{\kappa_{D_{t}}(\delta)}{\kappa_{P}(\delta)}  & \leq k\rho\frac
{|\delta|^{2k}-1}{(|\delta|-1)(|\delta|^{k}+1)|\delta|^{k-t}}
=k\rho\frac{|\delta|^{k}-1}{(|\delta|-1)|\delta|^{k-t}}\\
& \leq k\rho\frac{|\delta|^{k-1}+\cdots+1}{|\delta|^{k-t}}\leq k^{2}\rho
\frac{\max\{1,|\delta|^{k-1}\}}{|\delta|^{k-t}}= k^{2}\rho.
\end{align*}
Now we show the lower bound in the statement. From Proposition \ref{prop} and taking into account the block-structure of $D_t(\lambda, P)$, we obtain
$$(|\delta|+1) \min \{\|A_k\|_2, \|A_0\|_2\} \leq
  |\delta| \|L_1^t\|_2 + \|L_0^t\|_2. $$
Thus, from  (\ref{quotDi}),
\begin{align*}
\frac{\kappa_{D_{t}}(\delta)}{\kappa_{P}(\delta)}  &  \geq\frac{(|\delta
|+1)\min \{\Vert A_{k}\Vert_{2},\Vert A_{0}\Vert_{2}\}\sum_{i=0}^{k-1}%
|\delta|^{2i}}{\max_{i=0:k} \{\|A_{i}\|_{2}\}|\delta|^{k-t}\sum_{i=0}^{k}%
|\delta|^{i}}\\
&  =\frac{1}{\rho}\frac{\sum_{i=0}^{2k-1}|\delta|^{i}}{|\delta|^{k-t}%
\sum_{i=0}^{k}|\delta|^{i}}\geq\frac{1}{\rho}.
\end{align*}
 \end{proof}

 Notice that the previous theorem implies that,  if the norms of the matrix coefficients of $P(\lambda)$ have similar magnitudes, that is, $\rho \approx 1$, and $k$ is moderate, then the conditioning of  the eigenvalues of $D_1(\lambda, P)$ (resp. $D_k(\lambda, P)$)  is close to that of the corresponding eigenvalues of  $P(\lambda)$ when $|\delta|\geq 1$ and  $A_0$ is nonsingular (resp. when $|\delta|\leq 1$ and $A_k$ is nonsingular). 
 
 \begin{remark}\label{remarkD1}
Based on Theorems \ref{conditioningR} and  \ref{Di-cond}, we  compare the conditioning  of the nonzero finite simple eigenvalues of the linearizations $\mathcal{T}_P(\lambda)$, $D_1(\lambda, P)$ and $D_k(\lambda, P)$. We note first that the bounds on the quotient of condition numbers associated with $\mathcal{T}_P(\lambda)$ are valid for all eigenvalues, regardless of their modulus, while the bound for $D_1(\lambda, P)$ (resp. $D_k(\lambda, P)$) is only valid in a certain range of eigenvalues. This forces the use of both linearizations, $D_1(\lambda, P)$ and $D_k(\lambda, P)$,  when the matrix polynomial has both eigenvalues with modulus larger than 1 and eigenvalues with modulus less than 1. We note also that, when $P(\lambda)$ is scaled by dividing all matrix coefficients by $\max_{i=0:k} \{\|A_i\|_2\}$, the parameters $\rho$ and $\rho_1$ that appear in the bounds of the quotients of condition numbers are equal. We must also point out that, although the general upper bound for these quotients has degree 2 on $k$ for $D_1(\lambda, P)$ and $D_k(\lambda, P)$, and degree 3 for $\mathcal{T}_P(\lambda)$, when the modulus of the eigenvalue is not close to 1, the bound for $\mathcal{T}_P(\lambda)$ has degree 1 on $k$. Finally, observe that  $D_1(\lambda,P)$ (resp. $D_k(\lambda, P)$) is a linearization of $P(\lambda)$ if and only if  $A_0$ (resp. $A_k$) is nonsingular, contrarily to  $\mathcal{T}_P(\lambda)$, which is always a linearization of  $P(\lambda)$. Thus, it is clear that the use of $\mathcal{T}_P(\lambda)$ presents clear advantages over the combined use of $D_1(\lambda, P)$ and $D_k(\lambda, P)$ for conditioning purposes.
 \end{remark}

 We now recall a result that compares the backward error of an approximate eigenpair of the  linearization $D_t(\lambda, P)$, $t\in \{1, k\}$, 
  with the backward error of a certain approximate eigenpair of $P(\lambda)$.   

 \begin{theorem}\cite[Corollary 3.11]{backward}\label{backwardD}
 Let $P(\lambda)$ be a  matrix polynomial of degree $k$ as in (\ref{pol}) with $A_0 \neq 0$. Let $t\in \{1, k\}$ and suppose that $A_0$ is nonsingular if $t=1$, and $A_k$ is nonsingular if $t=k$. Let $\rho$ be as in (\ref{rhoD}). Let $(z, \delta)$ be an approximate right eigenpair of $D_t(\lambda, P)$, with $\delta$ nonzero and finite. Then, for $z_t = (e_t^T \otimes I_n)z$,  we have that $(z_t, \delta)$ is an approximate right eigenpair for $P(\lambda)$ and 
 \begin{equation}\label{backDt}
  \frac{\eta_P(z_t, \delta)}{\eta_{ D_t}(z, \delta)} \leq k^{3/2} \frac{\|z\|_2}{\|z_t\|_2}\rho.\end{equation}
 An analogue result holds for left eigenpairs $(w^*, \delta)$ of $D_t(\lambda, P)$ by simply replacing $z$ by $w$ and $z_t$ by  $w_t:= (e_t^T \otimes I_n)w$. 
 \end{theorem}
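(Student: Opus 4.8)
The plan is to repeat, essentially verbatim, the strategy used in the proof of Theorem \ref{backwardR}, with the identity of Lemma \ref{GFP-ans} replaced by the two defining relations of $\mathbb{DL}(P)$. Recall (see \cite{4m-vspace}) that $D_1(\lambda,P)$ and $D_k(\lambda,P)$ are the pencils in $\mathbb{DL}(P)$ with ansatz vectors $e_1$ and $e_k$, respectively, so that, writing $D_t(\lambda,P)=\lambda L_1^t-L_0^t$ for $t\in\{1,k\}$ and with $\Lambda(\lambda)$ as in (\ref{delta}),
\[
D_t(\lambda,P)\,(\Lambda(\lambda)\otimes I_n)=e_t\otimes P(\lambda),\qquad (\Lambda(\lambda)^T\otimes I_n)\,D_t(\lambda,P)=e_t^T\otimes P(\lambda).
\]
First I would use the second relation: for $z_t=(e_t^T\otimes I_n)z$,
\[
P(\delta)z_t=P(\delta)(e_t^T\otimes I_n)z=(e_t^T\otimes P(\delta))z=(\Lambda(\delta)^T\otimes I_n)\,D_t(\delta,P)\,z ,
\]
which shows that (when $z_t\neq 0$) $(z_t,\delta)$ is an approximate right eigenpair of $P(\lambda)$ and that $\|P(\delta)z_t\|_2\leq\|\Lambda(\delta)\|_2\,\|D_t(\delta,P)z\|_2$, since the spectral norm of a Kronecker product is the product of the two norms.

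Next I would insert the explicit formulas of Theorem \ref{back} for $\eta_P(z_t,\delta)$ and $\eta_{D_t}(z,\delta)$ and cancel the common factor $\|D_t(\delta,P)z\|_2$, obtaining
\[
\frac{\eta_P(z_t,\delta)}{\eta_{D_t}(z,\delta)}\leq\|\Lambda(\delta)\|_2\;\frac{|\delta|\,\|L_1^t\|_2+\|L_0^t\|_2}{\sum_{i=0}^k|\delta|^i\|A_i\|_2}\;\frac{\|z\|_2}{\|z_t\|_2}.
\]
It then remains to bound the scalar factor. For the first factor, $\|\Lambda(\delta)\|_2=\bigl(\sum_{i=0}^{k-1}|\delta|^{2i}\bigr)^{1/2}\leq\sqrt{k}\,\max\{1,|\delta|\}^{k-1}$; for the denominator, inequality (\ref{lowersumAi}) gives $\sum_{i=0}^k|\delta|^i\|A_i\|_2\geq\min\{\|A_0\|_2,\|A_k\|_2\}\,\max\{1,|\delta|^k\}$; and for the numerator I would use the block structure of $L_1^t$ and $L_0^t$ together with Proposition \ref{prop} to bound $|\delta|\,\|L_1^t\|_2+\|L_0^t\|_2$ by a small multiple of $k\,\max\{1,|\delta|\}\,\max_{i=0:k}\|A_i\|_2$. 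Combining these three estimates, the quotient of norms of matrix coefficients yields $\rho$ as in (\ref{rhoD}), the powers of $|\delta|$ collapse into a bounded factor, and the remaining powers of $k$ give the claimed $k^{3/2}$.

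The delicate point is precisely this numerator estimate: applying the generic block-norm bound of Proposition \ref{prop} separately to $L_1^t$ and $L_0^t$ is wasteful, and one has to exploit that for $t\in\{1,k\}$ the coefficients $L_1^t$ and $L_0^t$ are block matrices whose blocks are $0$ or $\pm A_i$ arranged in a block anti-triangular block-Hankel pattern; it is also convenient to split according to whether $|\delta|\leq 1$ or $|\delta|>1$, the two regimes being interchanged by passing from $D_1$ to $D_k$ and from $\delta$ to $1/\delta$ via the reversal, exactly as in Lemmas \ref{lbackrev} and \ref{backPrevP}. Finally, the assertion for an approximate left eigenpair $(w^*,\delta)$ follows along the same lines, either from the first ansatz relation or, more simply, by applying the right-eigenpair case to $P^*(\lambda)$ and the pencil $D_t(\lambda,P^*)$, exactly as was done for $\mathcal{T}_P(\lambda)$ in the discussion after Theorem \ref{backwardR}.
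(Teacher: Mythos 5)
The paper does not prove this statement: it is cited verbatim from \cite[Corollary~3.11]{backward}, so there is no in-paper argument to compare against. Your reconstruction, however, is the natural one and mirrors exactly the route the paper takes for the analogous Theorem~\ref{backwardR} on $\mathcal{T}_P(\lambda)$ and for the conditioning bound in Theorem~\ref{Di-cond}: use the $\mathbb{DL}(P)$ ansatz identity $(\Lambda(\lambda)^{T}\otimes I_n)D_t(\lambda,P)=e_t^{T}\otimes P(\lambda)$ to write $P(\delta)z_t=(\Lambda(\delta)^{T}\otimes I_n)D_t(\delta,P)z$, plug Theorem~\ref{back} into the ratio, cancel $\|D_t(\delta,P)z\|_2$, and bound the remaining scalar factor via Proposition~\ref{prop} and (\ref{lowersumAi}). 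The left-eigenpair reduction via $P^{*}$ and $D_t(\lambda,P)^{*}=D_t(\lambda,P^{*})$ is also the right move. So the strategy is sound.

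Two precision points are worth correcting, though. First, with the substitutions you propose, namely $\|\Lambda(\delta)\|_2\le\sqrt{k}\,\max\{1,|\delta|\}^{k-1}$, $|\delta|+1\le 2\max\{1,|\delta|\}$, and $|\delta|^k+1\ge\max\{1,|\delta|^k\}$, the scalar factor comes out as $2k^{3/2}\rho$, not $k^{3/2}\rho$. To recover the stated constant one should instead verify directly that
\[
\|\Lambda(\delta)\|_2\,(|\delta|+1)\;\le\;\sqrt{k}\,\bigl(|\delta|^{k}+1\bigr),
\]
which after squaring reduces to $(|\delta|+1)\sum_{i=0}^{k-1}|\delta|^{i}\le k(|\delta|^{k}+1)$ and hence to $|\delta|^{i}+|\delta|^{k-i}\le|\delta|^{k}+1$ for $i=0:k$, i.e.\ $(|\delta|^{i}-1)(|\delta|^{k-i}-1)\ge0$. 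With this sharper step no case split on $|\delta|\lessgtr1$ and no reversal argument are needed. Second, your remark that applying Proposition~\ref{prop} separately to $L_1^{t}$ and $L_0^{t}$ is ``wasteful'' is not correct: the generic bound $|\delta|\,\|L_1^{t}\|_2+\|L_0^{t}\|_2\le(|\delta|+1)\,k\,\max_{i}\|A_i\|_2$ (the paper's (\ref{boundsL1L0})) combined with the inequality above already yields exactly $k^{3/2}\rho\,\|z\|_2/\|z_t\|_2$; no finer exploitation of the block-Hankel structure is required.
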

 
Taking into account the form of the right eigenvectors of $D_1(\lambda,P)$ and $D_k(\lambda,P)$ (see Lemma \ref{eigD1Dk}), and assuming that the approximate eigenvector $z$ has a similar
block-structure, it is expected  that, if $t=1$ and $|\delta| \geq 1$, or if $t=k$ and $|\delta| \leq 1$, the approximate eigenvector $z_t$ for $P(\lambda)$,  recovered from the approximate eigenvector $z$ of $D_t(\lambda, P)$ as in Theorem \ref{backwardD}, makes the   quotient $\frac{\|z\|_2}{\|z_t\|_2}$ in (\ref{backDt})  close to 1.  

  \begin{remark}\label{remarkD2}
 Based on Theorems \ref{backwardR} and \ref{backwardD}, we compare the backward errors of  $\mathcal{T}_P(\lambda)$, $D_1(\lambda, P)$ and $D_k(\lambda, P)$.  First note that, although Theorem \ref{backwardD} is valid for any value of $\delta$, an argument similar to the one in Remark \ref{quotientzx} shows that, for $D_1(\lambda, P)$, the quotient $\frac{\|z\|_2}{\|z_1\|_2}$ in (\ref{backDt}) is expected to be close to one only if $|\delta| \geq 1$, while for $D_k(\lambda, P)$, the quotient $\frac{\|z\|_2}{\|z_k\|_2}$  is expected to be close to 1 only if $|\delta| \leq 1$. In contrast, according to Remark \ref{quotientzx}, the quotient $\frac{\|z\|_2}{\|x\|_2}$ appearing in Theorem \ref{backwardR} is expected to be close to one for any value of $\delta$. In addition, observe that, when  $P(\lambda)$ is scaled by dividing all the matrix coefficients by $\max_{i=0:k} \{\|A_i\|_2\}$, the parameters $\rho$ and $\rho'$ that appear in the bounds in Theorem \ref{backwardR} and \ref{backwardD}, respectively, have the same value, which is approximately one for polynomials whose coefficients have similar norms. Finally, observe that the bounds in Theorem \ref{backwardR} and \ref{backwardD} have the same dependence on $k$, that is, $k^{3/2}$, in general, and that the bound corresponding to $\mathcal{T}_p(\lambda)$ is improved  if $|\delta|$ is not close to 1, depending on  $k^{1/2}$ in this case. Therefore, once $P(\lambda)$ is divided by $\max_{i=0:k} \{ \|A_i\|_2\}$, the use of $\mathcal{T}_p(\lambda)$ presents clear advantages with respect to  $D_1(\lambda, P)$ and $D_k(\lambda, P)$ in terms of backward errors, since by using $\mathcal{T}_p(\lambda)$, for all approximate eigenvalues,  we will get similar backward errors as using $D_1(\lambda, P)$ for computing the eigenvalues with $|\delta| \geq 1$ and $D_k(\lambda, P)$, for computing the eigenvalues with $|\delta| \leq 1$.

  \end{remark}

 \subsection{Conditioning and backward error of $C_1(\lambda)$}
Next we focus on the first Frobenius companion linearization $C_{1}(\lambda)$ of
$P(\lambda).$   Note that,  since $C_2(\lambda)=[C_1(\lambda)]^{\mathcal{B}}$, where $C_1(\lambda)^{\mathcal{B}}$ denotes the block-transpose of $C_1(\lambda)$ and $C_2(\lambda)$ denotes the second Frobenius companion form of $P(\lambda)$ \cite{GLR-book2},   any result that we produce for $C_1(\lambda)$ has an immediate counterpart for $C_2(\lambda)$ (see  \cite[Lemma 7.1]{tisseur}).


We start by comparing the conditioning of the eigenvalues of $C_{1}(\lambda)$ with the
conditioning of the corresponding eigenvalues of $P(\lambda)$. As far as we know, an explicit result, valid
for any $k,$ is not given in the literature, though the quadratic case was
studied in \cite{tisseur}. 


 \begin{theorem}\label{conditioningC} Let $P(\lambda)$ be a regular matrix polynomial of degree $k$ as in (\ref{pol}) with $A_0\neq 0$.  
 Let $\delta$ be a simple, finite, nonzero eigenvalue of $P(\lambda)$. Let $C_1(\lambda)$ be the first  Frobenius companion linearization of $P(\lambda)$. Let 
 \begin{equation}\label{rhonu}
 \rho':=\frac{\max_{i=0:k} \{1,  \|A_i\|^2_2\} }{\min\{\|A_k\|_2, \|A_0\|_2\}}\quad \textrm{and} \quad \nu := \frac{\min\{ \max\{1, \|A_k\|_2\}, \max_{i=0:k-1}\{1,\|A_i\|_2\}\} }{\max_{i=0:k} \{ \|A_i\|_2\}}.
 \end{equation}
 Then, 
 $$\frac{\nu}{k+1} \leq \frac{\kappa_{C_1}(\delta)}{\kappa_P(\delta)} \leq   2\sqrt{2}k^3 \rho' .$$
Moreover, if $|\delta|\geq \sqrt{(k-1)^3}$ or $|\delta| \leq \frac{1}{2}$, then 
$$\frac{\kappa_{C_1}(\delta)}{\kappa_P(\delta)} \leq  \frac{4}{3} k(1+k)\rho'.$$

 \end{theorem}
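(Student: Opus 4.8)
The plan is to follow the pattern established for $\mathcal{T}_P(\lambda)$ in Section~\ref{conditioning;sec}: first turn the ratio $\kappa_{C_1}(\delta)/\kappa_P(\delta)$ into an explicit expression involving norms of the coefficients of $C_1(\lambda)$ and of two structured vectors, and then bound each factor. Write $C_1(\lambda)=\lambda\,C^{(1)}-C^{(0)}$ with $C^{(1)}$ and $C^{(0)}$ the coefficient matrices defined above. The first step is the analogue of Lemma~\ref{kappatp}. Using the companion identities
\[
C_1(\lambda)\,(\Lambda(\lambda)\otimes I_n)=e_1\otimes P(\lambda),\qquad
\left[\,I_n,\,P_1(\lambda),\,\dots,\,P_{k-1}(\lambda)\,\right]C_1(\lambda)=e_k^{T}\otimes P(\lambda),
\]
where the second identity is checked directly from the Horner recursion~(\ref{horner}), I would differentiate the second identity, multiply on the right by the right eigenvector $z=\Lambda(\delta)\otimes x$ and on the left by $y^{*}$, and use $P(\delta)x=0$, $y^{*}P(\delta)=0$ and $e_k^{T}\Lambda(\delta)=1$ to obtain $\omega^{*}C_1'(\delta)z=y^{*}P'(\delta)x$, where $\omega^{*}=y^{*}\left[\,I_n,\,P_1(\delta),\dots,P_{k-1}(\delta)\,\right]$ is the left eigenvector of $C_1(\lambda)$ from Lemma~\ref{eigC1}. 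Then Theorem~\ref{condform} applied to $C_1(\lambda)$ (with its natural weights) and to $P(\lambda)$ gives
\[
\frac{\kappa_{C_1}(\delta)}{\kappa_P(\delta)}
=\frac{\bigl(|\delta|\,\|C^{(1)}\|_2+\|C^{(0)}\|_2\bigr)\,\|\Lambda(\delta)\|_2\,\|\omega\|_2}
{\bigl(\sum_{i=0}^{k}|\delta|^{i}\|A_i\|_2\bigr)\,\|x\|_2\,\|y\|_2},
\]
so everything reduces to norm estimates.

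With this in hand, I would estimate the four factors exactly as in the proof of Theorem~\ref{conditioningR}. Since $C^{(1)}$ is block diagonal, $\|C^{(1)}\|_2=\max\{1,\|A_k\|_2\}$; by the block-companion structure of $C^{(0)}$ (a Cauchy--Schwarz estimate on its first block-row, or Proposition~\ref{prop}), $\|C^{(0)}\|_2\le\sqrt{k+1}\,\max_{i=0:k}\{1,\|A_i\|_2\}$, while $\|C^{(0)}\|_2\ge\max_{i=0:k-1}\{1,\|A_i\|_2\}$; these give the upper bound $\sqrt{k+1}(|\delta|+1)\max_{i=0:k}\{1,\|A_i\|_2\}$ and the lower bound $(|\delta|+1)\min\{\max\{1,\|A_k\|_2\},\max_{i=0:k-1}\{1,\|A_i\|_2\}\}$ on $|\delta|\,\|C^{(1)}\|_2+\|C^{(0)}\|_2$. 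For $\|\omega\|_2$ I would use $\|\omega\|_2^{2}=\|y\|_2^{2}+\sum_{i=1}^{k-1}\|y^{*}P_i(\delta)\|_2^{2}$ together with $\|\omega\|_2\ge\|y\|_2$; to bound $\|y^{*}P_i(\delta)\|_2$ I would apply Lemmas~\ref{Pjibound} and~\ref{tech1} directly when $|\delta|\le1$, and when $|\delta|\ge1$ first rewrite $y^{*}P_i(\delta)=-\delta^{\,i-k}y^{*}P^{k-i-1}(\delta)$ via Lemma~\ref{PrPr} so that all powers of $|\delta|$ become nonpositive; in both cases $\|y^{*}P_i(\delta)\|_2\le k\,\max_{i=0:k}\{\|A_i\|_2\}\,\|y\|_2$, hence $\|\omega\|_2\le k^{3/2}\max_{i=0:k}\{1,\|A_i\|_2\}\,\|y\|_2$. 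Finally $\sum_{i=0}^{k}|\delta|^{i}\|A_i\|_2\ge\min\{\|A_k\|_2,\|A_0\|_2\}(|\delta|^{k}+1)$ and $\le\max_{i=0:k}\{\|A_i\|_2\}\sum_{i=0}^{k}|\delta|^{i}$. Putting these together, $\kappa_{C_1}(\delta)/\kappa_P(\delta)$ is bounded above by $\rho'$ times $\sqrt{k+1}\,k^{3/2}$ times $\dfrac{(|\delta|+1)\|\Lambda(\delta)\|_2}{|\delta|^{k}+1}$, and below by $\nu$ times $\dfrac{(|\delta|+1)\max\{1,|\delta|^{k-1}\}}{(k+1)\max\{1,|\delta|^{k}\}}$.

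It then remains to bound the $|\delta|$-dependent factors. For the general upper bound, $\|\Lambda(\delta)\|_2^{2}=\sum_{i=0}^{k-1}|\delta|^{2i}\le k\max\{1,|\delta|^{2(k-1)}\}$, so a two-case check ($|\delta|\le1$ versus $|\delta|\ge1$) gives $\dfrac{(|\delta|+1)\|\Lambda(\delta)\|_2}{|\delta|^{k}+1}\le2\sqrt k$, whence $\kappa_{C_1}(\delta)/\kappa_P(\delta)\le2k^{2}\sqrt{k+1}\,\rho'\le2\sqrt2\,k^{3}\rho'$ for $k\ge2$. For the improved bound, when $|\delta|\le\frac12$ or $|\delta|\ge\sqrt{(k-1)^{3}}$ the geometric sums defining $\|\Lambda(\delta)\|_2$ and the sums $\sum_{s}|\delta|^{s}$ appearing in the Horner shifts collapse: $\|\Lambda(\delta)\|_2$ becomes $O(1)$ (resp.\ $O(|\delta|^{k-1})$) and each $\|y^{*}P_i(\delta)\|_2$ is $O(1)\max_{i=0:k}\{\|A_i\|_2\}\|y\|_2$, so that $\|\omega\|_2\le2\sqrt k\,\max_{i=0:k}\{1,\|A_i\|_2\}\|y\|_2$; combining this with $\dfrac{|\delta|+1}{|\delta|^{k}+1}\le\frac32$ (resp.\ $\le2|\delta|^{1-k}$) and tracking the constants carefully yields $\kappa_{C_1}(\delta)/\kappa_P(\delta)\le\frac43k(1+k)\rho'$. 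The lower bound follows from $\|\Lambda(\delta)\|_2\ge\max\{1,|\delta|^{k-1}\}$, $\|\omega\|_2\ge\|y\|_2$, the lower bound on $|\delta|\,\|C^{(1)}\|_2+\|C^{(0)}\|_2$, and $\sum_{i=0}^{k}|\delta|^{i}\|A_i\|_2\le(k+1)\max_{i=0:k}\{\|A_i\|_2\}\max\{1,|\delta|^{k}\}$, using $(|\delta|+1)\max\{1,|\delta|^{k-1}\}\ge\max\{1,|\delta|^{k}\}$.

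The main obstacle is the improved upper bound for $|\delta|$ far from $1$: there the crude Lemma~\ref{Pjibound}/Lemma~\ref{tech1} estimates must be replaced by the sharp geometric-series bounds that are only available in that regime, and the constants tracked precisely enough to land on $\frac43k(1+k)$ rather than a larger multiple; moreover, unlike the case of $\mathcal{T}_P(\lambda)$, there is no clean reversal symmetry for $C_1(\lambda)$, so the small-$|\delta|$ and large-$|\delta|$ regimes have to be handled separately rather than by passing from $P$ to $revP$. A secondary point requiring care is establishing the left companion identity and the resulting cancellation $\omega^{*}C_1'(\delta)z=y^{*}P'(\delta)x$, which is what makes the common factor $|y^{*}P'(\delta)x|$ drop out of the ratio.
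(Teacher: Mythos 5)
Your plan follows the paper's proof almost line by line: both start from the explicit ratio
\[
\frac{\kappa_{C_1}(\delta)}{\kappa_P(\delta)}
=\frac{(|\delta|\,\|X_1\|_2+\|Y_1\|_2)\,\|\Lambda(\delta)\|_2\,\|w\|_2}{(\sum_{i=0}^{k}|\delta|^i\|A_i\|_2)\,\|y\|_2}
\]
(the paper cites \cite[Theorem~7.3]{tisseur} for this, you rederive it from the Horner companion identity, which is fine), both use $\|X_1\|_2=\max\{1,\|A_k\|_2\}$, bounds on $\|Y_1\|_2$, the lower bound $\sum|\delta|^i\|A_i\|_2\geq\min\{\|A_k\|_2,\|A_0\|_2\}(|\delta|^k+1)$, Lemmas~\ref{Pjibound}, \ref{tech1}, and \ref{PrPr} to control $\|w\|_2/\|y\|_2$, and both correctly observe that $C_1$ lacks a reversal symmetry, so the $|\delta|\le1$ and $|\delta|>1$ regimes must be handled separately, with Lemma~\ref{PrPr} used to tame the growing powers in the latter.

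The one point where your plan is not quite tight enough is the improved bound $\frac43 k(1+k)\rho'$ at small degree. You bound $\|w\|_2/\|y\|_2$ and $\|\Lambda(\delta)\|_2$ \emph{separately} by $2\sqrt{k}\max\{1,\|A_i\|_2\}$ and $2/\sqrt{3}$ (for $|\delta|\le\tfrac12$), which gives roughly $2\sqrt{3}\sqrt{k(k+1)}\,\rho'$; at $k=2$ this is $6\sqrt{2}\,\rho'\approx 8.49\,\rho'$, which exceeds the target $\frac43\cdot 2\cdot 3\,\rho'=8\,\rho'$. The paper avoids this by bounding the product
\[
\frac{\|w\|_2\,\|\Lambda(\delta)\|_2}{\|y\|_2}\leq\sqrt{\Bigl(1+k(k-1)\textstyle\sum_{j}|\delta|^{2j}\Bigr)\textstyle\sum_{i}|\delta|^{2i}}\ \max_i\{1,\|A_i\|_2\}
\leq k\Bigl(\textstyle\sum_{i=0}^{k-1}|\delta|^{2i}\Bigr)\max_i\{1,\|A_i\|_2\},
\]
exploiting that the same geometric sum $\sum|\delta|^{2j}$ appears in both factors, and only then dividing by $|\delta|^k+1$ and using $\sum|\delta|^{2i}/(1)\leq 1/(1-|\delta|^2)\leq 4/3$; decoupling the two factors, as you do, loses a $\sqrt{\phantom{x}}$ that matters exactly when $k=2$. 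Your estimate $\|w\|_2\leq 2\sqrt{k}\max\{1,\|A_i\|_2\}\|y\|_2$ should in any case be sharpened to $\sqrt{4k-3}$ (for $|\delta|\le\tfrac12$), which already fixes the $k=2$ case, or better, kept coupled with $\|\Lambda(\delta)\|_2$ as the paper does. The rest of the proposal (the general $2\sqrt{2}k^3\rho'$ bound, the $|\delta|\geq\sqrt{(k-1)^3}$ case via Lemma~\ref{PrRr} rewritten as nonpositive powers, and the lower bound $\nu/(k+1)$ via $\|w\|_2\geq\|y\|_2$, $\|\Lambda(\delta)\|_2\geq\max\{1,|\delta|^{k-1}\}$) checks out.
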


 \begin{proof}
Let $C_1(\lambda):= \lambda X_1 + Y_1$.  By Lemma \ref{eigC1}, 
if  $y$ is a left-eigenvector of
$P(\lambda)$ associated with $\delta$, then
  $$w= \left [ \begin{array}{c} I_n \\ P_1^*(\delta) \\ \vdots \\ P_{k-1}^* (\delta) \end{array} \right] y, $$ 
 is a left eigenvector of $C_1(\lambda)$ associated with $\delta$. Taking into account \cite[Theorem 7.3]{tisseur}, we obtain%
\[
\frac{\kappa_{C_{1}}(\delta)}{\kappa_{P}(\delta)}=\frac{\Vert w\Vert_{2}%
}{\Vert y\Vert_{2}}\frac{(|\delta|\Vert X_{1}\Vert_{2}+\Vert Y_{1}\Vert
_{2})\Vert\Lambda(\delta)\Vert_{2}}{\sum_{i=0}^{k}|\delta|^{i}\Vert A_{i}\Vert_{2}},
\]
where $\Lambda(\lambda)$ is as in (\ref{delta}). By \cite[Theorem
7.4]{tisseur}, we have
\begin{equation}
\Vert X_{1}\Vert_{2}=\max\{1,\Vert A_{k}\Vert_{2}\}\label{f1}%
\end{equation}
and
\begin{equation}
\max_{i=0:k-1}\{1,\Vert A_{i}\Vert_{2}\}\leq\Vert Y_{1}\Vert_{2}\leq
k\;\max_{i=0:k-1}\{1,\Vert A_{i}\Vert_{2}\}.\label{f2}%
\end{equation}
Thus, using (\ref{lowersumAi}), we get
\begin{equation}
\frac{\kappa_{C_{1}}(\delta)}{\kappa_{P}(\delta)}\leq\frac{\Vert w\Vert_{2}%
}{\Vert y\Vert_{2}}\frac{(|\delta|+k)\max_{i=0:k}\{1,\Vert A_{i}\Vert_{2}%
\}}{\min\{\Vert A_{k}\Vert_{2},\Vert A_{0}\Vert_{2}\}
}\frac{\sqrt{\sum_{i=0}^{k-1}|\delta|^{2i}}}{|\delta|^{k}+1}.\label{f3}%
\end{equation}

Assume that $|\delta|\leq1$. Then, 
\begin{align}
\| w\|_{2}^{2} &  =\Vert y\Vert_{2}^{2}+\sum_{i=1}^{k-1}\Vert
P_{i}^{\ast}(\delta)y\Vert_{2}^{2}\leq\left(  1+\sum_{i=1}%
^{k-1}\Vert P_{i}^{\ast}(\delta)\Vert_{2}^{2}\right)  \Vert
y\Vert_{2}^{2}\nonumber\\
&  \leq\;\max_{i=0:k}\{1,\Vert A_{i}\Vert_{2}^2\}\Vert y\Vert_{2}^{2}\left(
1+\sum_{i=1}^{k-1}\left(  \sum_{j=0}^{i}|\delta|^{j}\right)  ^{2}\right)
\nonumber\\
&  \leq\;\max_{i=0:k}\{1,\Vert A_{i}\Vert_{2}^2\}\Vert y\Vert_{2}%
^{2}\left(  1+(k-1)k\sum_{j=0}^{k-1}|\delta|^{2j}\right)
,\label{w3}%
\end{align}
where the second and third inequalities follow taking into account Lemma
\ref{Pjibound} and Lemma \ref{tech1}, respectively. Thus,
\begin{align*}
\Vert w\Vert_{2}^{2} &  \leq \max_{i=0:k}\{1,\Vert A_{i}\Vert_{2}^2\}\Vert
y\Vert_{2}^{2}\left[  1+(k-1)k^2\right]    \leq k^{3}\max_{i=0:k}\{1,\Vert A_{i}\Vert_{2}^2\}\Vert y\Vert_{2}%
^{2}.
\end{align*}
From (\ref{f3}), we obtain
$
\frac{\kappa_{C_{1}}(\delta)}{\kappa_{P}(\delta)}\leq k^{2}
(1+k)\rho^{\prime}\leq 2\sqrt{2} k^3 \rho',
$
when $|\delta|\leq 1$.

A better bound for $\kappa_{C_1}(\delta)/\kappa_P(\delta)$ can be found when $|\delta| \ll 1$.  Assume that $|\delta|\leq \frac{1}{2}$.  Taking into account (\ref{f3}) and the upper bound for $\|w\|_2$ in (\ref{w3}),  we have
\begin{align*}
\frac{\kappa_{C_{1}}(\delta)}{\kappa_{P}(\delta)} &  \leq (1+k) \rho' \frac{\sqrt{(1+k(k-1)\sum_{j=0}^{k-1} |\delta|^{2j})\sum_{i=0}^{k-1} |\delta|^{2i}}}{|\delta|^k +1}\\
& \leq (1+k)k \rho' \frac{\sqrt{\sum_{j=0}^{k-1} |\delta|^{2j} \sum_{i=0}^{k-1}|\delta|^{2i}}}{|\delta|^k+1}\\
&\leq (1+k)k \rho' \frac{\sum_{j=0}^{k-1} |\delta|^{2j}}{|\delta|^k+1} =(1+k)k \rho' \frac{(1-|\delta|^{2k})}{(1-|\delta|^2)(1+|\delta|^k) } \\
& \leq  (1+k)k\rho' \frac{1} { (1-|\delta|^2)}  \leq \frac{4}{3} (1+k)k \rho',
\end{align*}
where the second inequality follows  because
$$1+k(k-1) \sum_{j=0}^{k-1}|\delta|^{2j}\leq \sum_{j=0}^{k-1}|\delta|^{2j}+(k^2-k) \sum_{j=0}^{k-1}|\delta|^{2j} \leq k^2 \sum_{j=0}^{k-1} |\delta|^{2j},$$
and the last inequality follows from the assumption $|\delta|\leq
\frac{1}{2}.$

If $|\delta|>1,$ taking into account  Lemma \ref{tech1}, Lemma \ref{PrPr}, and (\ref{Pjibound}), we have 
\begin{align}
\Vert w\Vert_{2}^{2}  &  =\Vert y\Vert_{2}^{2}+\sum_{i=1}^{k-1}\Vert
y^* P_{i}(\delta)\Vert_{2}^{2} = \|y\|_2^2 + \sum_{i=1}^{k-1} \|\delta^{i-k} y^* P^{k-i-1}(\delta) \|_2^2 \nonumber\\
& \leq \|y\|_2^2 + \sum_{i=1}^{k-1}|\delta|^{2(i-k)} \|P^{k-i-1}(\delta)\|_2^2 \|y\|_2^2 \nonumber\\
&\leq \|y\|_2^2 \max_{i=0:k}\{1, \|A_i\|_2^2\} \left[ 1 + \sum_{i=1}^{k-1} |\delta|^{2(i-k)}\left(\sum_{j=0}^{k-i-1}|\delta|^j\right)^2 \right]\nonumber\\
& \leq \|y\|_2^2 \max_{i=0:k}\{1, \|A_i\|_2^2\} \left[ 1 + \sum_{i=1}^{k-1} (k-i) \sum_{j=0}^{k-i-1}|\delta|^{2i-2k+2j}\right] \nonumber\\
& \leq \|y\|_2^2 \max_{i=0:k}\{1, \|A_i\|_2^2\} \left[ 1 + (k-1)^2 \sum_{j=0}^{k-2} |\delta|^{2j+2-2k}\right]\nonumber\\
& \leq \|y\|_2^2 \max_{i=0:k}\{1, \|A_i\|_2^2\} \left[ 1 + (k-1)^3 |\delta|^{-2}\right]. \label{wCbound}
\end{align}

Since $|\delta|+k\leq(|\delta|+1)k,$ taking into account
(\ref{f3}) and the upper bound for $\Vert w\Vert_{2}^{2}/\Vert y\Vert_{2}^{2}$
in (\ref{wCbound}), we have
\begin{align*}
\frac{\kappa_{C_{1}}(\delta)}{\kappa_{P}(\delta)} &  \leq\frac{\Vert
w\Vert_{2}}{\Vert y\Vert_{2}}\frac{\;\max_{i=0:k}\{1,\Vert A_{i}\Vert_{2}
\}}{\min\{\Vert A_{k}\Vert_{2},\Vert A_{0}\Vert_{2}\}
}\frac{(1+|\delta|)k\sqrt{k}|\delta|^{k-1}}{1+|\delta|^{k}}\\
&=\rho' k \sqrt{k} \sqrt{1+(k-1)^3|\delta|^{-2}} \frac{(1+|\delta|)}{|\delta|}\\
&\leq\left\{
\begin{array}
[c]{ll}%
2\sqrt{2} k^{3/2}\rho^{\prime}\leq \frac{4}{3} k(k+1)\rho', & \text{if $|\delta|\geq\sqrt{(k-1)^{3}},$}\\
 2\sqrt{2} k^{3}\rho^{\prime}, & \text{otherwise.}
\end{array}
\right.
\end{align*}

Now we find a lower bound for $\frac{\kappa_{C_1}(\delta)}{\kappa_P(\delta)}$. Notice that $\Vert w\Vert_{2}\geq\Vert y\Vert_{2}$. Thus, from (\ref{f1}) and (\ref{f2}), we have
\begin{align*}
\frac{\kappa_{C_{1}}(\delta)}{\kappa_{P}(\delta)} &\geq \frac{(|\delta| \|X\|_2 + \|Y\|_2) \|\Lambda(\delta)\|_2}{\sum_{i=0}^k |\delta|^i \|A_i\|_2}\\
& \geq \frac{min\{ \max\{1, \|A_k\|_2\}, \max_{i=0:k-1}\{1,\|A_i\|_2\}\} }{\max_{i=0:k} \{ \|A_i\|_2\}}\frac{(|\delta|+1)\sqrt{\sum_{i=0}^{k-1}|\delta|^{2i}}}{\sum_{i=0}^k |\delta|^i} 
\geq \frac{\nu}{k+1}
\end{align*}
where the last inequality holds since
$$\frac{(|\delta|+1)\sqrt{\sum_{i=0}^{k-1}|\delta|^{2i}}}{\sum_{i=0}^k |\delta|^i} \geq  \frac{1}{k+1}.$$
In fact, to see this, note that, if $|\delta| \geq 1$,
$$(|\delta|+1)\sqrt{ \sum_{i=0}^{k-1}|\delta|^{2i} }\geq |\delta|^k \quad \textrm{and} \quad \sum_{i=0}^k |\delta|^i \leq  (k+1) |\delta|^k,$$
and, if $|\delta| \leq 1$, 
$$(|\delta|+1)\sqrt{ \sum_{i=0}^{k-1}|\delta|^{2i} }\geq 1 \quad \textrm{and} \quad \sum_{i=0}^k |\delta|^i \leq k+1.$$
 \end{proof}
 
 
 
We now  recall a result obtained in \cite{backward}
regarding the comparison of the backward errors of  approximate eigenpairs of  $C_{1}(\lambda)$ and
$P(\lambda).$


 
 \begin{theorem}\cite[Theorems 3.6 and 3.8]{backward}\label{backwardC}
Let $P(\lambda)$ be a matrix polynomial of degree $k$ as in (\ref{pol}) with $A_0 \neq 0$.  Let $(z, \delta)$ be an approximate right eigenpair of $C_1(\lambda)$. Then, for $z_t=(e_t^T \otimes I_n)z$, $t=1:k$, we have that $(z_t, \delta)$ is an approximate right eigenpair of $P(\lambda)$ and 
 \begin{equation}\label{backC1}  \frac{\eta_P(z_t, \delta)}{\eta_{C_1}(z, \delta)} \leq k^{5/2} \frac{\|z\|_2}{\|z_t\|_2}\rho', 
 \end{equation}
 where $\rho'$ is as in (\ref{rhonu}). 
 Let $(w^*, \delta)$ be an approximate left eigenpair of $C_1(\lambda)$. Then, for $w_1 = (e_1^T \otimes I_n)w$, we have that $(w_1^*, \delta)$ is an approximate left eigenpair of $P(\lambda)$ and 
 $$\frac{\eta_P(w_1^*, \delta)}{\eta_{C_1}(w^*, \delta)} \leq k^{3/2} \frac{\|w\|_2}{\|w_1\|_2}\tau,$$
 where $\tau = \frac{\max_{i=0:k} \{1, \|A_i\|_2\}}{min \{ \|A_0\|_2, \|A_k\|_2\}}.$
 \end{theorem}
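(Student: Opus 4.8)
The plan is to obtain both inequalities directly from the explicit backward error formulas of Theorem \ref{back}, combined with two structural identities for $C_1(\lambda)$. Write $C_1(\lambda)=\lambda X_1+Y_1$ and, with $\Lambda(\lambda)$ as in (\ref{delta}), recall the classical first companion ansatz $C_1(\lambda)(\Lambda(\lambda)\otimes I_n)=e_1\otimes P(\lambda)$, which is checked by a direct block computation. For an approximate right eigenpair $(z,\delta)$, partition $z=[z_1,\ldots,z_k]^{\mathcal{B}}$ and $r:=C_1(\delta)z=[r_1,\ldots,r_k]^{\mathcal{B}}$ into $n\times 1$ blocks, so that $z_t=(e_t^T\otimes I_n)z$. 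The last $k-1$ block rows of $C_1(\delta)z=r$ give $z_{j-1}=\delta z_j-r_j$, hence $z_j=\delta^{k-j}z_k-\sum_{i=j+1}^k\delta^{\,i-j-1}r_i$; substituting this into the first block row and using $\delta^kA_k+\cdots+A_0=P(\delta)$ yields $P(\delta)z_k=r_1+\sum_{i=2}^k P_{i-1}(\delta)\,r_i$ with $P_i$ as in (\ref{pol-Pi}). Combining the last two displays, together with the Horner splitting $P(\delta)=\delta^{k-i}P_i(\delta)+P^{k-i-1}(\delta)$ to simplify the mixed terms, produces an identity $P(\delta)z_t=M_t(\delta)\,C_1(\delta)z$ in which $M_t(\delta)$ is a $1\times k$ block row whose block entries are powers of $\delta$ times $I_n$, a Horner shift $P_i(\delta)$, or a reverse Horner shift $P^i(\delta)$ as in (\ref{pol-Pii}), all of total degree at most $k-1$ in $\delta$. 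For the left statement the same ansatz, used on the left, is all that is needed: for an approximate left eigenpair $(w^*,\delta)$, setting $w_1=(e_1^T\otimes I_n)w$ as in Lemma \ref{eigC1}, one has $w_1^* P(\delta)=w^*(e_1\otimes P(\delta))=(w^*C_1(\delta))(\Lambda(\delta)\otimes I_n)$.

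With these identities the rest is norm bookkeeping. For the right case I would bound $\|M_t(\delta)\|_2$ via Proposition \ref{prop} (a $1\times k$ block matrix) together with Lemmas \ref{Pjibound} and \ref{tech1} applied to the Horner blocks, obtaining $\|M_t(\delta)\|_2\le \sqrt{k}\,\max_{i}\{1,\|A_i\|_2\}\sum_{s=0}^{k-1}|\delta|^s$; then use (\ref{f1})--(\ref{f2}) to get $|\delta|\|X_1\|_2+\|Y_1\|_2\le k(|\delta|+1)\max_i\{1,\|A_i\|_2\}$, and (\ref{lowersumAi}) to get $\sum_{i=0}^k|\delta|^i\|A_i\|_2\ge\min\{\|A_k\|_2,\|A_0\|_2\}(|\delta|^k+1)$. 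Plugging these into
\begin{equation*}
\frac{\eta_P(z_t,\delta)}{\eta_{C_1}(z,\delta)}=\frac{\|P(\delta)z_t\|_2}{(\sum_{i=0}^k|\delta|^i\|A_i\|_2)\|z_t\|_2}\cdot\frac{(|\delta|\|X_1\|_2+\|Y_1\|_2)\|z\|_2}{\|C_1(\delta)z\|_2}\le \|M_t(\delta)\|_2\,\frac{|\delta|\|X_1\|_2+\|Y_1\|_2}{\sum_{i=0}^k|\delta|^i\|A_i\|_2}\,\frac{\|z\|_2}{\|z_t\|_2}
\end{equation*}
collects $\max_i\{1,\|A_i\|_2\}^2/\min\{\|A_k\|_2,\|A_0\|_2\}=\rho'$, a factor $k^{3/2}$ from the $\sqrt{k}$ and from the $k$ in the bound for $\|Y_1\|_2$, and a residual scalar ratio $\frac{(\sum_{s=0}^{k-1}|\delta|^s)(|\delta|+1)}{|\delta|^k+1}$, which is at most $2k$ for every $\delta$; this gives (\ref{backC1}).

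For the left statement the argument is the same but cheaper, because the multiplier $\Lambda(\delta)\otimes I_n$ carries no matrix coefficients of $P$: from $\|w_1^* P(\delta)\|_2\le\|w^*C_1(\delta)\|_2\,\|\Lambda(\delta)\|_2$ and $\|\Lambda(\delta)\|_2=(\sum_{s=0}^{k-1}|\delta|^{2s})^{1/2}$, the same chain of inequalities with (\ref{f1})--(\ref{f2}) and (\ref{lowersumAi}) produces only one power of $\max_i\{1,\|A_i\|_2\}$ instead of two, i.e. the parameter $\tau=\max_i\{1,\|A_i\|_2\}/\min\{\|A_0\|_2,\|A_k\|_2\}$ rather than $\rho'$, and the residual scalar ratio $\frac{(\sum_{s=0}^{k-1}|\delta|^{2s})^{1/2}(|\delta|+1)}{|\delta|^k+1}$ is at most $2\sqrt{k}$ for all $\delta$, yielding the constant $k^{3/2}\tau$.

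The routine but delicate part is this last one: one must verify that the mixed blocks $\delta^sP_i(\delta)$ and $\delta^sP^i(\delta)$ in $M_t(\delta)$ indeed have degree at most $k-1$ and norm at most $\max_i\{1,\|A_i\|_2\}\sum_{s=0}^{k-1}|\delta|^s$ uniformly in $t$, and that the two scalar ratios above are bounded by $2k$ and $2\sqrt{k}$ for both $|\delta|\le 1$ and $|\delta|>1$. It is precisely this uniformity in $\delta$ — here $C_1(\lambda)$ needs no separate treatment of large versus small eigenvalues, in contrast with $D_1(\lambda,P)$ and $D_k(\lambda,P)$ — together with tracking the exact powers of $k$, where the care goes; everything else is an assembly of Theorem \ref{back}, Proposition \ref{prop}, and Lemmas \ref{Pjibound}, \ref{tech1}, and \ref{eigC1}.
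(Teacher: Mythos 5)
The paper does not prove this statement; it is imported from \cite[Theorems 3.6 and 3.8]{backward} with a reference only, so there is no internal proof to compare against. Your reconstruction is, however, the natural route and is the same one the paper itself uses for the analogous Theorems \ref{backwardR} and \ref{conditioningC}: derive a one-sided factorization relating the residual of $P$ at the recovered vector to the residual of the linearization, then do norm bookkeeping with Theorem \ref{back}, Proposition \ref{prop}, Lemma \ref{Pjibound}, and the bounds (\ref{f1})--(\ref{f2}) on $\|X_1\|_2,\|Y_1\|_2$. I checked your identity $P(\delta)z_t=M_t(\delta)C_1(\delta)z$: with $z=[z_1,\ldots,z_k]^{\mathcal{B}}$, $r=C_1(\delta)z$, back-substituting $z_{j-1}=\delta z_j-r_j$ into the first block row gives $P(\delta)z_k=r_1+\sum_{i=2}^kP_{i-1}(\delta)r_i$, and combining with $z_t=\delta^{k-t}z_k-\sum_{i=t+1}^k\delta^{i-t-1}r_i$ and the Horner split $P(\delta)=\delta^{k-i+1}P_{i-1}(\delta)+P^{k-i}(\delta)$ yields
\[
M_t(\delta)=\bigl[\delta^{k-t}I_n,\ \delta^{k-t}P_1(\delta),\ \ldots,\ \delta^{k-t}P_{t-1}(\delta),\ -P^{k-t-1}(\delta),\ \ldots,\ -\delta^{k-t-1}P^{0}(\delta)\bigr],
\]
each block of norm at most $\max_{i}\{1,\|A_i\|_2\}\sum_{s=0}^{k-1}|\delta|^s$, so your bound on $\|M_t(\delta)\|_2$ is correct. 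The left-eigenpair identity $w_1^*P(\delta)=(w^*C_1(\delta))(\Lambda(\delta)\otimes I_n)$ is also correct and gives exactly one fewer power of $\max_i\{1,\|A_i\|_2\}$, explaining $\tau$ in place of $\rho'$.

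The one genuine (though small and fixable) gap is the final constant: as written, your chain yields $2k^{5/2}\rho'$ and $2k^{3/2}\tau$, not the stated $k^{5/2}\rho'$ and $k^{3/2}\tau$. You enlarge $|\delta|\|X_1\|_2+\|Y_1\|_2\le(|\delta|+k)\max_i\{1,\|A_i\|_2\}$ to $k(|\delta|+1)\max_i\{1,\|A_i\|_2\}$ and then bound the residual scalar ratio by $2k$ (resp.\ $2\sqrt{k}$), which overcounts by a factor $2$. To recover the exact exponent and constant in the right-eigenpair case, keep $|\delta|+k$ and split
\[
\frac{\bigl(\sum_{s=0}^{k-1}|\delta|^s\bigr)(|\delta|+k)}{|\delta|^k+1}
=\frac{\bigl(\sum_{s=0}^{k-1}|\delta|^s\bigr)(|\delta|+1)}{|\delta|^k+1}
+(k-1)\,\frac{\sum_{s=0}^{k-1}|\delta|^s}{|\delta|^k+1}.
\]
For the first term use $\bigl(\sum_{s=0}^{k-1}|\delta|^s\bigr)(|\delta|+1)=1+|\delta|^k+2\sum_{s=1}^{k-1}|\delta|^s$ and the pairing estimate $|\delta|^s+|\delta|^{k-s}\le|\delta|^k+1$ (from $(|\delta|^s-1)(|\delta|^{k-s}-1)\ge 0$), which gives $2\sum_{s=1}^{k-1}|\delta|^s\le(k-1)(|\delta|^k+1)$ and hence the first term is at most $k$. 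The second term is at most $(k-1)k$ (bound the sum by $k$ or by $k|\delta|^{k-1}$ according to $|\delta|\le1$ or $|\delta|>1$). The total is $k^2$, and $\sqrt{k}\cdot k^2\cdot\rho'=k^{5/2}\rho'$, as stated. The left case is closed analogously with $\|\Lambda(\delta)\|_2$ in place of $\sum_{s=0}^{k-1}|\delta|^s$; the only extra work is to show the sharper inequality $\|\Lambda(\delta)\|_2(|\delta|+1)\le\sqrt{k}\,(|\delta|^k+1)$, which, after squaring, reduces to the same pairing estimate applied to $\sum_{m=0}^{2k-1}|\delta|^m$.

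One cosmetic point: you invoke Lemma \ref{tech1} in bounding $\|M_t(\delta)\|_2$, but it is not actually needed there; Proposition \ref{prop} together with Lemma \ref{Pjibound} suffices. You correctly identify the key structural fact, worth emphasizing: unlike for $D_1(\lambda,P)$ and $D_k(\lambda,P)$, the same recovered block $z_t$ works uniformly in $t$ and no case split in $|\delta|$ is required for these bounds to hold.
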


\begin{remark}\label{recovery2}
Taking into account the form of the right eigenvectors of $C_1(\lambda)$  (see Lemma \ref{eigC1}),  and assuming that the approximate eigenvector $z$ has a similar block structure, it is expected that the quotient $\frac{\|z\|_2}{\|z_t\|_2}$ in (\ref{backC1}) is close to 1  if $t=1$ and $|\delta| \geq 1$, or if $t=k$ and $|\delta| \leq 1$.  Thus, depending on the modulus of the approximate eigenvalue $\delta$, it is convenient to recover the approximate eigenvector $x$ of $P(\lambda)$ from the eigenvector $z$ of $C_1(\lambda)$ as follows: we take $x=z_1$ if $|\delta| \geq 1$ and $x=z_k$ if $|\delta| \leq 1$. 
Moreover, $\rho' \approx 1$ if  all matrix coefficients have norms close to 1. In practice, we can scale $P(\lambda)$ by dividing each of its matrix coefficients by $\max_{i=0:k}\{\|A_i\|_2\}$. The scaled polynomial $\tilde{P}(\lambda)$ has the same eigenvalues and eigenvectors as $P(\lambda)$ and, additionally, the conditioning of the eigenvalues and the backward error of the approximate eigenpairs is not changed by the scaling. Thus, we can work with the pencil $C_1(\lambda)$ associated with $\tilde{P}(\lambda)$. In this case, for $\rho'$ to be close to 1, it is enough that all matrix coefficients of $\tilde{P}(\lambda)$ have similar norms.
\end{remark}
 
 
 \begin{remark}\label{remarkC}
Based on Theorems \ref{conditioningR}, \ref{backwardR}, \ref{conditioningC} and \ref{backwardC}, we  conclude that $\mathcal{T}_p(\lambda)$ and $C_1(\lambda)$ have similar behavior in terms of conditioning and backward errors. Note that  the bounds on the quotients of condition numbers and backward errors are comparable when  $P(\lambda)$ is scaled by dividing its matrix coefficients by $\max_{i=0:k} \{\|A_i\|_2\}$. Moreover, both pencils are always  linearizations of $P(\lambda)$ (regular and singular). However, when $P(\lambda)$  is symmetric (Hermitian),  $\mathcal{T}_P(\lambda)$ has an advantage over $C_1(\lambda)$ since, in this case, $\mathcal{T}_P(\lambda)$ is also symmetric (Hermitian) while $C_1(\lambda)$ is not and, for numerical reasons, it is more convenient that the linearizations  preserve the structure of the original matrix polynomial in order to preserve any symmetries in the spectrum. 
 \end{remark}
 


\section{Numerical experiments}\label{numerical}

In this section, we run some numerical experiments to illustrate the theoretical results presented in previous sections concerning the conditioning of eigenvalues and backward errors of approximate eigenpairs of the linearizations $\mathcal{T}_P(\lambda)$, $D_1(\lambda,P)$, $D_k(\lambda,P)$ and $C_1(\lambda)$ of a matrix polynomial $P(\lambda)$ and to compare the behavior of the different linearizations analyzed. In some examples, $P(\lambda)$ is a random matrix polynomial while in others, $P(\lambda)$ is one of the matrix polynomials connected with some applications discussed in \cite{betcke2013nlevp}.


The experiments were  run on MATLAB-R2016a, for which the unit roundoff is $2^{-53}$. When calculating the condition numbers, we computed the eigenvalues, eigenvectors, and the condition numbers themselves  using variable precision arithmetic with 40 digits of precision. When computing the backward error, we considered only right eigenpairs. The function polyeig is used to compute approximate eigenvalues and eigenvectors of the linearization.  If $z$ denotes a computed eigenvector for the linearization associated with a computed eigenvalue $\delta$, an eigenvector for $P(\lambda)$ was recovered from $z$ as described in Remark \ref{recovery2} and Theorems  \ref{backwardR} and \ref{backwardD}.

In order to improve the ratio of the condition numbers and backward errors of the linearizations $\mathcal{T}_P(\lambda)$ and $C_1(\lambda)$ of the matrix polynomial $P(\lambda)$,  a scaling of the polynomial $P(\lambda)$ of the type discussed in Section \ref{cone-sec}  is applied in some cases (recall that the condition number and the backward error of $P(\lambda)$ are invariant under such  scalings). More specifically, in our experiments with random polynomials we scale $P(\lambda)$  by dividing each matrix coefficient by $\max_{i=0:k} \{\|A_i\|_2\}$, a scaling that leaves the parameter $\lambda$ invariant.
 As pointed out in Remarks \ref{remarkD1}, \ref{remarkD2}, and \ref{remarkC}, this scaling ensures that the parameters $\rho$, $\rho'$, $\rho_1$ and $\tau$ appearing in the bounds of the quotients of condition numbers and backward errors are of similar magnitudes and that the quotient of the norms of approximate eigenvectors in the upper bounds of the ratios of backward errors are close to 1. We note that this type of scaling does not produce any improvements in the ratios associated with $D_1(\lambda,P)$ and $D_k(\lambda,P)$. 
The numerical results that we present next show both cases, when $P(\lambda)$ is scaled and when $P(\lambda)$ is not.   In our experiments with the applied problems, we also apply some eigenvalue scalings, as we explain later.

To start with,  we examine a $20\times 20$  random matrix polynomial  $P(\lambda)$ of degree 3. The polynomial $P(\lambda)$ was generated by producing  random matrix coefficients with entries between $[-50,50]$.  Note that the norms of the matrix coefficients of $P(\lambda)$ are similar as  can be seen in Table \ref{Table1}. Also, in this table the smallest and the largest modulus of the eigenvalues are displayed.

\begin{figure}[H]
\includegraphics[width=16cm, height=9.6cm]{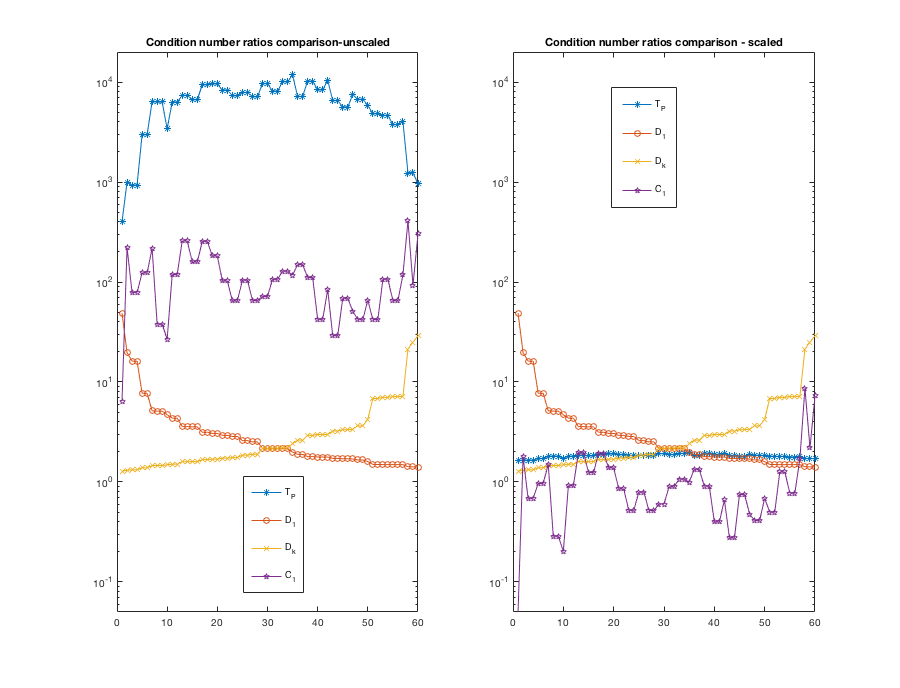}
\caption{Condition number ratio for unscaled (left) and scaled (right) random $20\times 20$ matrix polynomial $P(\lambda)$ of degree $3$.}\label{Figure1}\end{figure}

In Figures \ref{Figure1} and \ref{Figure2}, the $x$-axis has the indices $1,2,...,60$ to represent the non-zero, simple finite eigenvalues of $P(\lambda)$, which are sorted in increasing order by modulus (i.e. $1$ represents  the smallest eigenvalue in this order while 60 represents the eigenvalue with largest modulus). On the $y$-axis we give the ratio $\frac{\kappa_L(\delta)}{\kappa_P(\delta)}$, where $L(\lambda)$ denotes any of the  linearizations, $D_1(\lambda,P)$, $D_k(\lambda,P)$, $C_1(\lambda)$ or $\mathcal{T}_P(\lambda)$. 

In Figure \ref{Figure1}, we present the results for conditioning while in Figure \ref{Figure2} we present the results for backward errors. The conditioning  results associated with $\mathcal{T}_P(\lambda)$ in this example are not ideal when $P(\lambda)$ is not scaled (figure on the left), with ratios mostly close to $10^4$ compared to ratios close to 1 corresponding to the combined use of $D_1(\lambda,P)$ and $D_k(\lambda,P)$. Note that we can guess what eigenvalues have modulus less than 1 or larger than 1 by inspecting where the graphs for $D_1(\lambda,P)$ and $D_k(\lambda,P)$ intersect.  A  behavior similar to that of $\mathcal{T}_P(\lambda)$ can be observed for $C_1(\lambda)$, which is natural since, in contrast with $D_1(\lambda, P)$ and $D_k(\lambda, P)$,    $\mathcal{T}_p(\lambda)$ and $C_1(\lambda)$ have both identity blocks that will lead to undesirable behaviors either when $\max_{i=0:k} \{\|A_i\|_2\} \ll 1$ or when $\max_{i=0:k} \{ \|A_i\|_2\}\gg 1$. 
In this example, since 
$\max_{i=0:k} \{\|A_i\|_2\} \approx 250$, it can be expected that the behavior of $\mathcal{T}_p(\lambda)$ and $C_1(\lambda)$ is penalized by a  factor of  $\approx 10^4$ with respect to that of 
$D_1(\lambda, P)$ and $D_k(\lambda, P)$. We emphasize that these naturally expected behaviors are fully supported by the theoretical results in Section \ref{main} and \ref{cond-D1Dk}.

\begin{figure}[H]
\includegraphics[width=16cm, height=9.6cm]{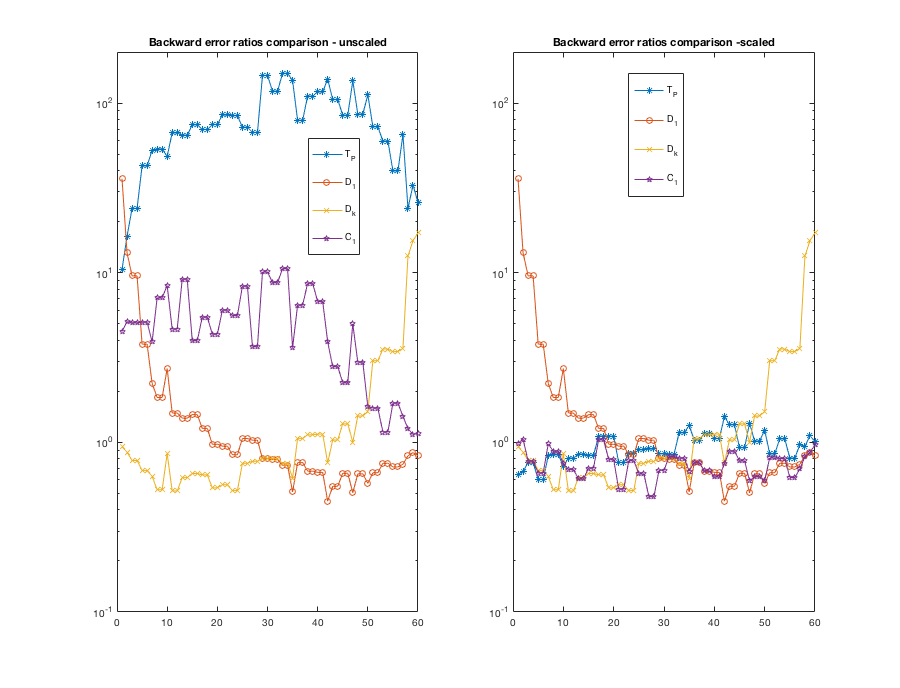}
\caption{Backward error ratio for unscaled (left) and scaled (right) random $20\times 20$ matrix polynomial $P(\lambda)$ of degree $3$.}\label{Figure2}\end{figure}





In the figure on the right in Figure \ref{Figure1}, we repeat the experiment with $P(\lambda)$ scaled by dividing all its matrix coefficients by $\max_{i=0:k}\{\|A_i\|_2\}$. Note that now  $\mathcal{T}_P(\lambda)$  yields remarkably better results, comparable to those of the combined used of $D_1(\lambda,P)$ and $D_k(\lambda,P)$ and much better than any of them considered individually.  It is worth noting that while the scaling improved the behavior of $\mathcal{T}_P(\lambda)$ and $C_1(\lambda)$, it had no effect on the condition number ratios for $D_1(\lambda,P)$ and $D_k(\lambda,P)$, as the theory predicts  and it is naturally expected as discussed in the previous paragraph. In Table \ref{Table1}, the value of the maximum ratio of condition numbers for each of the four linearizations has been highlighted in boldface.

In Figure \ref{Figure2} we present the values of the ratio $\frac{\eta_P(x, \delta)}{\eta_L(z, \delta)}$ of backward errors associated with $P(\lambda)$ and the linearizations that we are considering, before scaling (on the left) and after scaling (on the right). These figures show the improvement in the backward error ratio after scaling the same matrix polynomial  $P(\lambda)$ again by dividing all its matrix coefficients by $\max_{i=0:k}\{\|A_i\|_2\}$. Once more, we notice that the backward error ratios  for $\mathcal{T}_P(\lambda)$ and $C_1(\lambda)$ are worse than those of the combined use of $D_1(\lambda,P)$ and $D_k(\lambda,P)$  before scaling, but become much closer to 1 after scaling $P(\lambda)$. In Table \ref{Table1}, the value of the maximum ratio of backward errors for each of the four linearizations has been highlighted in boldface. Notice the good behavior of $\mathcal{T}_P(\lambda)$ compared to the other linearizations.  We can state, informally, that after scaling $\mathcal{T}_p(\lambda)$ is the winner among the block symmetric linearizations.

All data pertaining to Figures \ref{Figure1}-\ref{Figure2} is summarized in Table \ref{Table1}.  In the table we also add the data for a $2\times 2$ random matrix polynomial of degree 3. For this example we do not display the corresponding graphs, which are similar to the ones previously shown.  Note that the results do not seem to be influenced by the size of the matrix polynomial. 

One might expect that, for large $k$,  $\mathcal{T}_P(\lambda)$ has  a poor  behavior in terms of conditioning and backward error, especially for eigenvalues of modulus close to 1, since the upper bounds for the ratio of  the condition numbers and the ratios of  backward errors  increase with $k$ and, in fact, the bound for the ratio of condition
numbers increases faster than the one for $D_t(\lambda)$, $t=0,k$. However, optimal behavior is still observed for random matrix polynomials of large odd degree $k$ whose matrix coefficients have similar norms. Figure \ref{Figure5}  shows the results for a $3 \times 3$ random matrix polynomial  of degree $k = 21$ after scaling the matrix coefficients of the matrix polynomial by dividing by $\max_{i=0:k}\{\|A_i\|_2\}$.

\begin{table}
\caption{\footnotesize Experiment results for random polynomials: $k = 3, n = 2$ and $k = 3, n = 20$.}\label{Table1}
\begin{center}
\begin{tabular}{|c|cc|cc|}
\hline

Problem:  & \multicolumn{2}{|c|}{k=3} & \multicolumn{2}{|c|}{k=3}\\

 & \multicolumn{2}{|c|}{n=2}  & \multicolumn{2}{|c|}{n=20}\\
\hline
& Unscaled & Scaled  & Unscaled& Scaled \\
\hline
$|\delta_{min}|$ & 0.12&0.12&0.178 &0.178\\

$|\delta_{\max}|$ & 1.36 &1.36& 4.31 &4.31\\
\hline
$\|A_3\|_2$ & 41.7&0.75&228.07&0.94\\

$\|A_2\|_2$ &24.1&0.43&243.23&1\\

$\|A_1\|_2$ &45.2&0.81&235.7&0.97\\

$\|A_0\|_2$ &55.5&1&242.8&0.99\\
\hline
$\rho$ & 1.3 &1.3&1.066&1.066\\
$\rho_1$ &4117.02&1.3&63093.5&1.066\\
$\rho'$ &74.1&1.3&259.4&1.066\\
\hline & \space & \space & \space & \space\\
$D_1, D_k$ upper bound for cond &12 &12& 9.6  & 9.6 \\
$D_1$ upper bound for back &483.5 &483.5 &176.6&176.6 \\
$D_k$ upper bound for back &17.35 &17.35 &105.99 & 105.99\\
$\mathcal{T}_P$  upper bound for cond. &222319.3&72&3407051.27& 57.59\\
$\mathcal{T}_P$ upper bound for back. &55764.9&41.8&815295.5&33.84\\
$C_1$  upper bound for cond. &5658.99&101.86&19809.6& 81.44\\
$C_1$ upper bound for back. &1990.5&35.8&6845.6&28.1\\
& \space & \space & \space & \space\\
\hline
$\min\{\kappa_{D_1}/\kappa_{P}$\} &2.1&2.1&1.4&1.4\\
$\max\{\kappa_{D_1}/\kappa_{P}$\} &91.5&\textbf{91.5}&48.8&\textbf{48.8}\\
$\min\{\kappa_{D_k}/\kappa_{P}$\} &1.04 &1.04&1.28&1.28\\
$\max\{\kappa_{D_k}/\kappa_{P}$\} &3.4&\textbf{3.4}&28.96&\textbf{28.96}\\
$\min\{\kappa_{\mathcal{T}_P}/\kappa_{P} $\}&4.86&1.3&400.18&1.63\\
$\max\{\kappa_{\mathcal{T}_P}/\kappa_{P} $\}&1439.6&\textbf{2.65}&11812.15&\textbf{1.97}\\
$\min\{\kappa_{C_1}/\kappa_{P}$\} & 17.16&0.63&6.45&0.05\\
$\max\{\kappa_{C_1}/\kappa_{P}$\} &78.5&\textbf{22.38}&409.8&\textbf{8.69}\\
\hline
$\min\{\eta_P/\eta_{D_1} \}$& 0.27&0.27&0.45&0.45\\
$\max\{\eta_P/\eta_{D_1}\}$ &74.12&\textbf{74.12}&35.8&\textbf{35.8}\\
$\min\{\eta_P/\eta_{D_k}$\} & 0.43&0.43&0.52&0.52\\
$\max\{\eta_P/\eta_{D_k}$\}&0.99&\textbf{0.99}&17.14&\textbf{17.14}\\
$\min\{\eta_P/\eta_{\mathcal{T}_P}$\} &1.15 &0.64&10.46&0.6\\
$\max\{\eta_P/\eta_{\mathcal{T}_P}$\} &42.5&\textbf{1.74}&149.7&\textbf{1.41}\\
$\min\{\eta_P/\eta_{C_1}$\} &2.5&0.41&1.11&0.48\\
$\max\{\eta_P/\eta_{C_1}$\} &5.08&\textbf{1.22}&10.6&\textbf{1.05}\\

\hline
\end{tabular}
\end{center}
\end{table}

So far results for randomly generated matrix polynomials have been presented. Next we show the results of several  experiments on  matrix polynomials related with some applied problems given in \cite{betcke2013nlevp}.  We investigate the backward error ratios of two specific  problems: 1)  ``Relative Pose 5pt Problem" (five point relative pose problem in computer vision), which generates a $10\times 10$ matrix polynomial of degree $3$;   2)  ``Plasma-drift Problem" (modeling of drift instabilities in the plasma edge inside a Tokamak reactor), which generates a  $128\times 128$ matrix polynomial of degree $3$.  The results for condition number ratios are similar to the backward error results and, therefore, are omitted for brevity.

\begin{figure}[H]
\includegraphics[width=16cm, height=9.6cm]{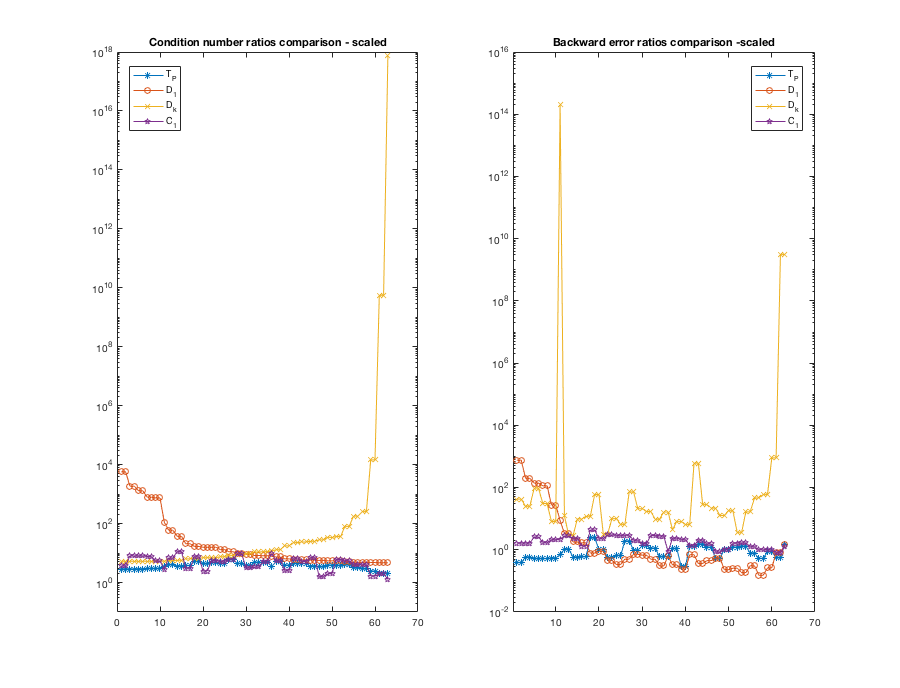}
\caption{Condition number ratio (left) and backward error ratio (right) for scaled random $3\times 3$   $P(\lambda)$ of degree $21$.}\label{Figure5}
\end{figure}



For the Relative Pose 5pt problem, since the polynomial has a singular leading coefficient, we only consider the linearizations $D_1(\lambda, P)$, $\mathcal{T}_P(\lambda)$, and $C_1(\lambda)$, since $D_k(\lambda, P)$ is not a linearization of the polynomial in this case. We note  that $P(\lambda)$ only has 10 finite eigenvalues. Moreover, the norms of the matrix coefficients of $P(\lambda)$ are similar. In this case the quotients of backward errors obtained associated with the three linearizations are close to 1 or less than 1, even when the polynomial is not scaled. Figure $\ref{Figure12}$ shows the results after scaling the polynomial by dividing its matrix coefficients by $\max_{i=0:k}\{\|A_i\|_2\}$. In Table \ref{Table3} we show data relative to this problem before and after scaling. 

The plasma-drift problem is more complex. As Table \ref{Table3} shows, the norms of the matrix coefficients of the matrix polynomial are not similar. Figure   \ref{Figure14} shows the ratios of backward errors corresponding to the unscaled polynomial (left figure) and the polynomial scaled by dividing its matrix coefficients by $\max_{i=0:k}\{\|A_i\|_2\}$ (right figure). This scaling is clearly  not enough  for $\mathcal{T}_{P}(\lambda)$ to outperform the combined use of $D_1(\lambda, P)$ and $D_k(\lambda, P)$ although the maximal backward error ratio for $\mathcal{T}_p(\lambda)$ is 35.28, which is very satisfactory. Thus, we consider two tropical scalings of the eigenvalue \cite{vanni, gaubert, tropical}. More explicitly, we consider the scaled polynomials $\beta P(\gamma \mu)$, where, for the tropical scaling 1, we use the parameters $\beta \approx 0.0081$ and $\gamma \approx 0.1$; and for the tropical scaling 2, we use the parameters $\beta\approx 0.0001$ and $\gamma \approx 9.8541$ (see Sections 2.1 and 2.2 in \cite{tropical} on how to calculate these parameters). The  obtained results are presented in Figure \ref{Figure15}. We note that, in this case, using the tropical scaling 1, $\mathcal{T}_P(\lambda)$ outperforms the combined use of $D_1(\lambda,P)$ and $D_k(\lambda, P)$ at one third of the eigenvalues (the ones with smallest modulus) but for large eigenvalues the behavior of $\mathcal{T}_p(\lambda)$ is not satisfactory and $D_1(\lambda, P)$ is clearly the winner; using the tropical scaling 2,  $\mathcal{T}_P(\lambda)$  produces ratios close to 1 for the rest of the eigenvalues. 

\begin{center}
\begin{figure}[H]
\includegraphics[width=15cm, height=9cm]{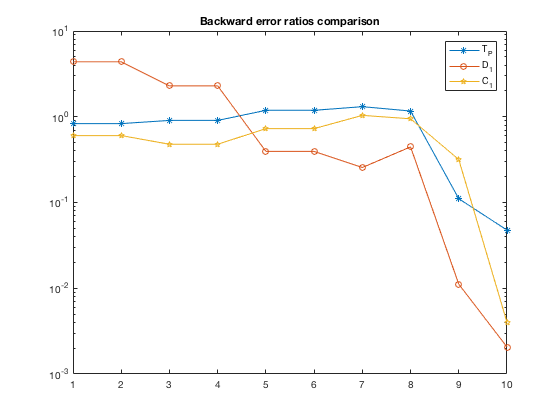}
\caption{Backward error ratios after scaling $P(\lambda)$ in Relative Pose 5pt Problem.}\label{Figure12}

\end{figure}
\end{center}

\begin{figure}[H]
\includegraphics[width=16cm, height=9.6cm]{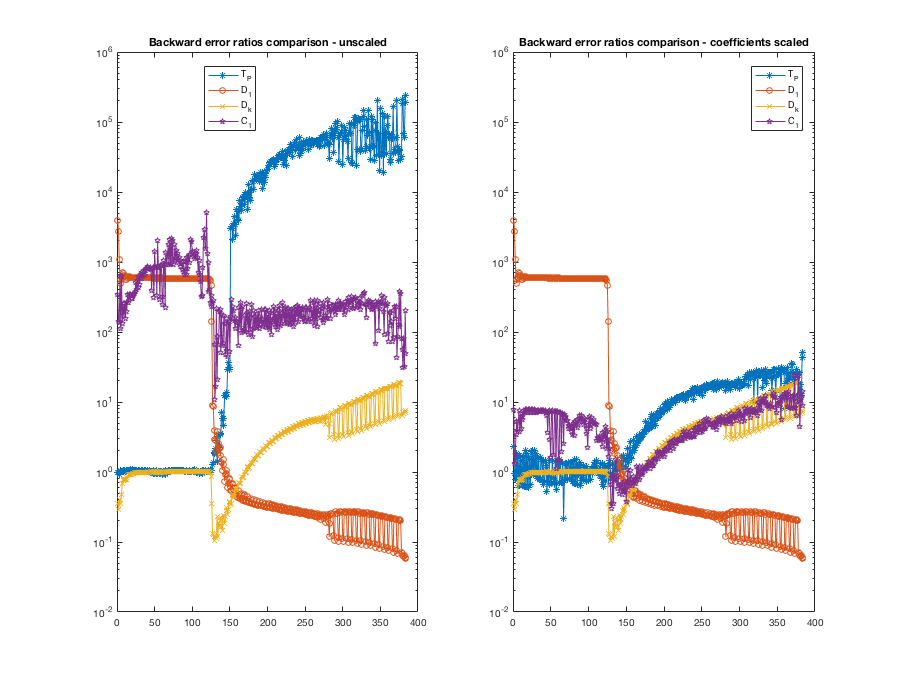}
\caption{Backward error ratios before scaling (left) and  scaling only coefficients (right)  in Plasma Drift Problem.}\label{Figure14}

\end{figure}

\begin{figure}[H]
\includegraphics[width=16cm, height=9.6cm]{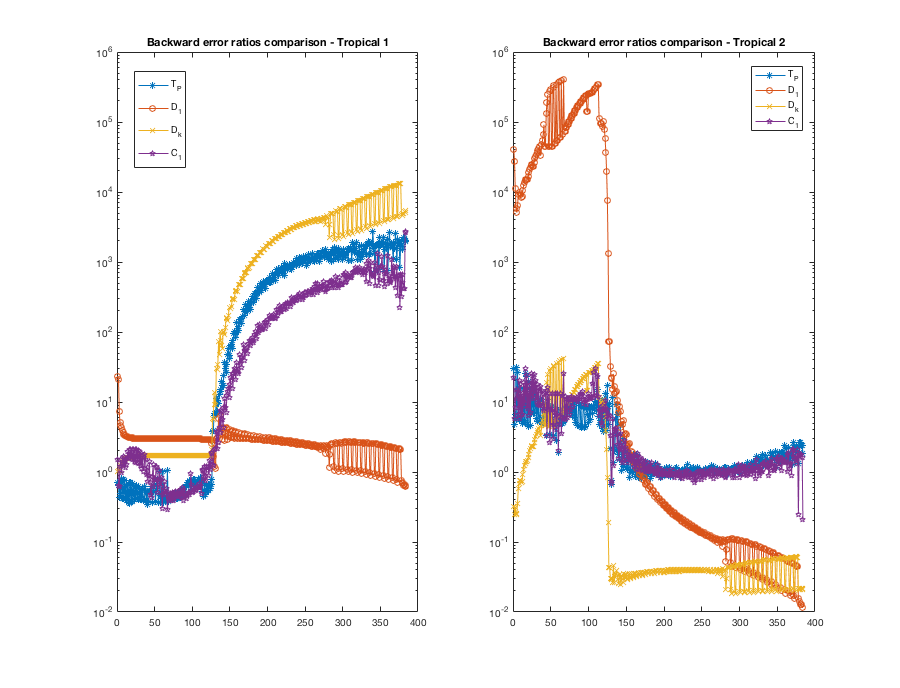}
\caption{Backward error ratios after Tropical scaling 1 (left) and  after Tropical scaling 2 (right)  in Plasma Drift Problem.}\label{Figure15}
\end{figure}








\begin{table}
\caption {\footnotesize Backward error results for two problems in NLEVP: A collection of nonlinear eigenvalue problems.}\label{Table3}
\begin{center}
\begin{tabular}{|c|cc|cccc|}
\hline
Problem:  & \multicolumn{2}{|c|}{Relative Pose 5pt} & \multicolumn{4}{|c|}{Plasma-drift}\\
\hline
& Unscaled & Scaled  & Unscaled& Scaled & Tropical 1 & Tropical 2 \\
\hline
$|\delta_{min}|$ &0.573&0.573&0.028&0.028 &0.28& 0.0028\\

$|\delta_{\max}|$ & 29.71 &29.71&11.745&11.745 &117.45& 1.1919\\

\hline
$\|A_3\|_2$ &0.708&0.336&12.698&0.0103 &0.0001& 1\\

$\|A_2\|_2$ &1.29&0.614&5.304&0.0043 &0.0004& 0.042\\

$\|A_1\|_2$ &1.88&0.891&1233.03&1.0 &1& 1\\

$\|A_0\|_2$ &2.11&1&123.23&0.0999 &1&0.01\\
\hline

$\min\{\eta_P/\eta_{D_1} \}$&0.002&0.002&0.0577&0.0577&0.61& 0.011\\
$\max\{\eta_P/\eta_{D_1}\}$ &4.372&4.372&3980.4&3980.4&23.52&406578.4\\
$\min\{\eta_P/\eta_{D_k} \}$&N/A&N/A&0.104&0.104&1.027&0.018\\
$\max\{\eta_P/\eta_{D_k}\}$ &N/A&N/A&18.749&18.749&13278.8&41.74\\
$\min\{\eta_P/\eta_{\mathcal{T}_P}$\} &0.033 &0.072&0.857&0.359&0.34&0.66\\
$\max\{\eta_P/\eta_{\mathcal{T}_P}$\} &1.18&1.45&190738.36&35.28&2764.58&30.59\\
$\min\{\eta_P/\eta_{C_1}$\} &0.112 &0.044&13.79&0.341&0.28&0.2\\
$\max\{\eta_P/\eta_{C_1}$\} &1.19&1.28&1883.16&14.24&2682.44&29.7\\
\hline
\end{tabular}
\end{center}
\end{table}

As a conclusion, in all the numerical experiments that we have run, we were able to find appropriate scalings of the matrix polynomial $P(\lambda)$ that allowed the use of $\mathcal{T}_P(\lambda)$ to produce small ratios of condition numbers for all nonzero finite simple eigenvalues of $P(\lambda)$, and small ratios of backward errors for approximate eigenpairs corresponding to all such eigenvalues. When the norms of the matrix coefficients of the matrix polynomial were similar, just scaling the matrix coefficients so that the largest one had norm 1 was enough for  the numerical behavior of $\mathcal{T}_P(\lambda)$ to be comparable with the combined used of $D_1(\lambda,P)$ and $D_k(\lambda,P)$, i.e.,   of the other two block symmetric linearizations considered in this work. When the norms of the matrix coefficients of $P(\lambda)$ were not similar, an eigenvalue tropical scaling was necessary.

\section{Conclusions}

In this paper, we have studied for the first time the eigenvalue conditioning and backward errors of  the linearization $\mathcal{T}_P(\lambda)$ (and $\mathcal{R}_P(\lambda)$) of an odd degree regular matrix polynomial $P(\lambda)$ and compared them with those of $D_1(\lambda,P)$, $D_k(\lambda,P)$, and $C_1(\lambda)$.  We  compared  the condition number of a finite, nonzero, simple eigenvalue $\delta$ of $P(\lambda)$, when considered an eigenvalue of $P(\lambda)$ and  an eigenvalue of each of the linearizations mentioned above. We have also compared the backward error of an approximate eigenpair $(z, \delta)$ of each of the previous  linearizations with the backward error of an approximate eigenpair $(x, \delta)$ of $P(\lambda)$, where $x$ has been recovered from $z$ in a convenient way.  As follows from the discussion below, when $P(\lambda)$ is symmetric or Hermitian and has odd degree, among the four linearizations studied, the linearization $\mathcal{T}_P(\lambda)$ (and $\mathcal{R}_P(\lambda)$) seems to be the most convenient to use in practice. 

For every regular matrix polynomial $P(\lambda)$ as in (\ref{pol}), the pencil $C_1(\lambda)$ is a strong linearization of $P(\lambda)$.  According to \cite{backward, tisseur} and our results in Section \ref{cond-D1Dk}, this linearization exhibits good numerical behavior, both in terms of conditioning and backward errors,  when $P(\lambda)$ has matrix coefficients with similar norm, specially after the matrix coefficients of $P(\lambda)$ are scaled by dividing each of them by $\max_{i=0:k} \{\|A_i\|_2\}$. The main disadvantage of this linearization is that it is not symmetric (Hermitian) when $P(\lambda)$ is, and preserving the structure of $P(\lambda)$ is convenient in order to preserve symmetries in the spectrum  in the presence of rounding errors (see, for instance, Section 3 in \cite{SC}). 

The pencils $D_1(\lambda,P)$ and $D_k(\lambda,P)$ are both symmetric (Hermitian) when $P(\lambda)$ is. However, $D_1(\lambda,P)$ and $D_k(\lambda,P)$ are not always  a linearization of a regular matrix polynomial $P(\lambda)$. It is well- known that $D_1(\lambda,P)$  is a linearization of $P(\lambda)$ if and only if $A_0$ is nonsingular and $D_k(\lambda,P)$ is a linearization of $P(\lambda)$ if and only if  $A_k$ is nonsingular. According to \cite{tisseur} and our results in Section \ref{cond-D1Dk}, when $D_1(\lambda,P)$ and $D_k(\lambda,P)$ are linearizations of $P(\lambda)$, they exhibit good numerical behavior in terms of  condition number but only in a certain range of eigenvalues,  and as long as the matrix coefficients of $P(\lambda)$ have similar norms. More precisely, the eigenvalue $\delta$ has  condition number ratio close to 1, when considered an eigenvalue of  $D_1(\lambda,P)$,  if $|\delta| \geq 1$, and when considered an eigenvalue of $D_k(\lambda,P)$, if $|\delta| \leq 1$. Similar conclusions can be drawn regarding the backward error,  as shown in \cite{backward}.

For every matrix polynomial $P(\lambda)$ of odd degree, the pencils $\mathcal{T}_P(\lambda)$ and $\mathcal{R}_P(\lambda)$  are strong linearizations of $P(\lambda)$. Moreover, they are symmetric (Hermitian) when $P(\lambda)$ is.  These pencils exhibit good numerical behavior in terms of conditioning and backward error, for eigenvalues of any modulus, as long as  the norms of the matrix coefficients are similar. Thus, we can conclude that, for regular symmetric or Hermitian  matrix polynomials of odd degree, $\mathcal{T}_P(\lambda)$ and $\mathcal{R}_P(\lambda)$ are the ideal choice among the linearizations that we have considered in this paper. It is an open question though  if we can find  linearizations for matrix polynomials of even degree  with  properties similar to those of $\mathcal{T}_P(\lambda)$ (and $\mathcal{R}_P(\lambda)$). 

\section{Appendix A}

Next we give the proof of  Theorem \ref{SC-preserv}.

Given a matrix polynomial $P(\lambda)$ as in (\ref{pol}), let $C_{P}$ and
$B_{P}$ be the matrices defined in \cite[Section 2]{SC}. The 
sign characteristic of a matrix polynomial is  the sign characteristic of the pair $(C_P, B_P)$ or 
any other pair unitarily similar to it (See Definition 2.3 in \cite{SC}).
We recall that unitarily similar pairs have the same
sign characetristic.

Let us denote 
$
\Gamma_l:=diag(1,\gamma,\gamma^{2},\ldots,\gamma^{l-1}).
$
It is easy to show that
\[
C_{\widetilde{P}}=\frac{1}{\gamma}\Gamma_k^{-1}C_{P}\Gamma_k,\quad B_{\widetilde
{P}}=\gamma\beta(\Gamma_k B_{p}\Gamma_k),
\]

Denote by $J_{P}$ and $J_{\widetilde{P}}$ the Jordan forms of $C_{P}$ and
$C_{\widetilde{P}}$, respectively, and let $S_{P}$ and $S_{\widetilde{P}}$ be
nonsingular matrices such that
\[
J_{P}=S_{P}^{-1}C_{P}S_{P},\quad J_{\widetilde{P}}=S_{\widetilde{P}}%
^{-1}C_{\widetilde{P}}S_{\widetilde{P}}.
\]
Let $N_{P}=S_{P}^{\ast}B_{P}S_{P}$ and $N_{\widetilde{P}}=S_{\widetilde{P}%
}^{\ast}B_{\widetilde{P}}S_{\widetilde{P}}.$ Note that  $(J_P, N_P)$ is unitarily similar to $(C_P, B_P)$, and $(J_{\tilde{P}}, N_{\tilde{P}})$ is unitarily similar to $(C_{\widetilde{P}}, B_{\widetilde{P}})$. This implies that the sign characteristic of $P$ and $\tilde{P}$ is given by the sign characteristic of $(J_P, N_P)$ and $(J_{\tilde{P}}, N_{\tilde{P}})$, respectively. We have
\begin{align*}
J_{\widetilde{P}} &  =\frac{1}{\gamma}(S_{\widetilde{P}}^{-1}\Gamma_k^{-1}%
)C_{P}(\Gamma_k S_{\widetilde{P}})
  =\frac{1}{\gamma}(S_{\widetilde{P}}^{-1}\Gamma_k^{-1}S_{P})J_{P}(S_{P}%
^{-1}\Gamma_k S_{\widetilde{P}})
  =(S_{P}^{-1}\Gamma_k S_{\widetilde{P}})^{-1}\left(  \frac{1}{\gamma}%
J_{P}\right)  (S_{P}^{-1}\Gamma_k S_{\widetilde{P}}).
\end{align*}
Also,
\begin{align*}
N_{\widetilde{P}} &  =\gamma\beta(S_{\widetilde{P}}^{\ast}\Gamma_k)B_{P}(\Gamma_k
S_{\widetilde{P}})
  =\gamma\beta(S_{\widetilde{P}}^{\ast}\Gamma_k S_{P}^{-\ast})N_{P}(S_{P}%
^{-1}\Gamma_k S_{\widetilde{P}})
  =(S_{P}^{-1}\Gamma_k S_{\widetilde{P}})^{\ast}(\gamma\beta N_{P})(S_{P}%
^{-1}\Gamma_k S_{\widetilde{P}}).
\end{align*}
Therefore, $(J_{\widetilde{P}},N_{\widetilde{P}})$ is unitarily similar
to $(\frac{1}{\gamma}J_{P},\gamma\beta N_{P})$. Thus, in order to prove our claim, it is enough to see that
$(\frac{1}{\gamma}J_{P},\gamma\beta N_{P})$ and $(J_{P},N_{P})$ have the same
sign characteristic.

Let $T$ be such that $T^{-1}J_{P}T=J$ and $T^{\ast}N_{P}T=P_{\varepsilon,J},$
where $J$ and $P_{\varepsilon,J}$ are as in Theorem 2.2 in \cite{SC}. Using the
notation for $\alpha$, $\beta$, and $l_i$, $i=1, \ldots, \beta$, in that theorem, let%
\[
L=
{\displaystyle\bigoplus_{i=1}^{\alpha}}
\sqrt{(\beta\gamma^{l_{i}})^{-1}}\Gamma_{l_i}\oplus
{\displaystyle\bigoplus_{i=\alpha+1}^{\beta}}
\left[  \sqrt{(\beta\gamma^{l_{i}})^{-1}}\left( \Gamma_{\frac{l_{i}}{2}}\oplus \Gamma_{\frac{l_{i}}{2}}\right)  \right] .
\]
Then,%
\[
(TL)^{-1}\left(\frac{1}{\gamma}J_{P}\right)TL=L^{-1}\left(\frac{1}{\gamma}J\right)L=J^{\prime},
\]
where $J^{\prime}$ has the same structure as $J,$ with each eigenvalue in $J$
multiplied by $\frac{1}{\gamma}$. Also,
\[
(TL)^{\ast}(\gamma\beta N_{P})TL=\gamma\beta L^{\ast}P_{\varepsilon
,J}L=P_{\varepsilon,J}.
\]
Thus, $(\frac{1}{\gamma}J_{P},\gamma\beta N_{P})$ and $(J_{P},N_{P})$ have the
same sign characteristic associated with the corresponding eigenvalues.


\end{document}